\newcommand{\N}{\mathbb{N}}
\newcommand{\R}{\mathbb{R}}
\newcommand{\A}{\mathcal{A}}
\newcommand{\D}{\mathcal{D}}
\newcommand{\dx}{\, {\rm d} x}
\newcommand{\dt}{\, {\rm d} t}
\newcommand{\eps}{\varepsilon}
\newcommand{\loc}{{\rm loc}}
\newcommand{\compact}{\stackrel{c}{\hookrightarrow}}
\renewcommand{\S}{\mathscr{S}}
\newcommand{\T}{\mathcal{T}}
\newcommand{\B}{\mathcal{B}}
\newtheorem{lemma}{Lemma}[section]
\newtheorem{thm}[lemma]{Theorem}
\theoremstyle{definition}
\newtheorem{rmk}[lemma]{Remark}
\numberwithin{equation}{section}
\begin{document}

\title[A $p$-Laplacian problem in $\R^N$ with singular, convective, critical reaction]{Existence and regularity for a $p$-Laplacian problem \\ in $\R^N$ with singular, convective, critical reaction}

\author[L. Baldelli]{Laura Baldelli}
\address[L. Baldelli]{IMAG, Departamento de Análisis Matemático, Universidad de Granada, Campus Fuentenueva, 18071 Granada, Spain}
\email{labaldelli@ugr.es}


\author[U. Guarnotta]{Umberto Guarnotta}
\address[U. Guarnotta]{Università degli Studi di Enna ``Kore'', Department of Engineering and Architecture, 94100 Enna, Italy}
\email{umberto.guarnotta@unikore.it}

\maketitle

\begin{abstract}
We prove an existence result for a $p$-Laplacian problem set in the whole Euclidean space and exhibiting a critical term perturbed by a singular, convective reaction. The approach used combines variational methods, truncation techniques, and concentration compactness arguments, together with set-valued analysis and fixed point theory. De Giorgi's technique, a priori gradient estimates, and nonlinear regularity theory are employed to get local $C^{1,\alpha}$ regularity of solutions, as well as their pointwise decay at infinity. The result is new even in the non-singular case, also for the Laplacian.
\end{abstract}

{
\let\thefootnote\relax
\footnote{{\bf{MSC 2020}}: 35J92, 35J20, 35B08, 35B45.}
\footnote{{\bf{Keywords}}: Mountain pass theorem, Concentration compactness, Set-valued analysis, Fixed point theory, Gradient estimates.}
\footnote{\Letter \quad Corresponding author: Umberto Guarnotta (umberto.guarnotta@unikore.it).}
}
\setcounter{footnote}{0}

\begin{center}
\begin{minipage}{11cm}
\begin{small}
\tableofcontents
\end{small}
\end{minipage}
\end{center}

\section{Introduction}

In the present paper we consider the problem
\begin{equation}
\label{prob}
\tag{${\rm P}_\lambda$}
\left\{
\begin{alignedat}{2}
-\Delta_p u &=\lambda w(x)f(u,\nabla u) +  u^{p^*-1} \quad &&\mbox{in} \;\; \R^N, \\
u &> 0 \quad &&\mbox{in} \;\; \R^N, \\
u(x) &\to 0 \quad &&\mbox{as} \;\; |x|\to+\infty,
\end{alignedat}
\right.
\end{equation}
where $1<p<N$ (whence $N\geq 2$), $\Delta_p$ is the $p$-Laplacian operator, $\lambda>0$, and $p^*:=\frac{Np}{N-p}$ is the Sobolev critical exponent. We assume the following hypotheses:
\begin{enumerate}[label={$({\rm H}_f)$}]
\item \label{hypf}
The function $f:(0,+\infty) \times \R^N \to (0,+\infty)$ is a continuous function obeying
\begin{equation*}
c_1 s^{-\gamma} \leq f(s,\xi)\leq c_2(s^{-\gamma}+|\xi|^{r-1}) \quad \forall (s,\xi) \in (0,+\infty)\times \R^N,
\end{equation*}
for some $0<\gamma<1<r<p$ and $c_1,c_2>0$.
\end{enumerate}
\begin{enumerate}[label={$({\rm H}_w)$}]
\item \label{hypw}
The function $w:\R^N\to(0,+\infty)$ belongs to $L^1(\R^N) \cap L^\infty(\R^N)$ and satisfies the following conditions:
\begin{equation}
\label{weightdecay}
\mbox{there exist} \;\; c_3,R>0 \;\; \mbox{and} \;\; l> N+\gamma \, \frac{N-p}{p-1} \;\; \mbox{such that} \;\; w(x) \leq c_3|x|^{-l} \quad \forall x\in B_R^e;
\end{equation}
\begin{equation}
\label{weightbelow}
\mbox{there exist} \;\; x_0\in\R^N \;\; \mbox{and} \;\; \varrho,\omega>0 \;\; \mbox{such that} \;\; \inf_{B_\varrho(x_0)} w \geq \omega.
\end{equation}
\end{enumerate}

Describing, forecasting, and controlling the evolution of a variety of phenomena in physics, chemistry, finance, biology, ecology, medicine, sociology, and industrial activity cannot be done without taking into account nonlinear equations. For what concerns the principal part of the operator, the $p$-Laplacian arises in the theory of quasi-regular and quasi-conformal mappings, as well as it provides a mathematical model of non-Newtonian fluids ($1<p<2$ represents pseudoplastic fluids, such as lava, while $p>2$ describes dilating fluids, such as blood); see \cite{am}. On the other hand, reaction terms exhibiting singular nonlinearities are important in natural sciences, such as in the study of heat conduction
in electrically conducting materials \cite{fm} and for chemical heterogeneous catalysts \cite{p}. Also convective elliptic problems naturally arise from applicative questions, such as optimal stochastic control problems (cf. \cite[p.241]{gr8}). Other models exploiting singular and convective elliptic problems can be found in the monography \cite{gr8}.

The behavior of equations in which a critical term is perturbed with a lower-order term is studied in many model problems, such as the Yamabe problem, the problem of searching an extremal function for the isoperimetric inequality, as well as the existence of non-minimal solutions of the Yang-Mills equation (see \cite{bn} and the references therein): this is one of the motivations which inspired the present work.

The aim of the paper is to prove the following result.
\begin{thm}
\label{mainthm}
Suppose \ref{hypf}--\ref{hypw}. Then there exists $\Lambda>0$ such that, for any $\lambda\in(0,\Lambda)$, problem \eqref{prob} admits a weak solution in $\D^{1,p}_0(\R^N)\cap L^\infty(\R^N)\cap C^{1,\alpha}_\loc(\R^N)$, for some $\alpha\in(0,1]$.
\end{thm}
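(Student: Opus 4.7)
The plan is to decouple the gradient dependence from the singular and critical parts by freezing the argument of the convective term, solve the resulting frozen problem by truncation, mountain pass, and concentration-compactness, and then close the loop through a set-valued fixed point argument. Precisely, for each $v$ in a suitable bounded closed convex set $K\subset\mathcal{D}^{1,p}_0(\R^N)$ dictated by the a priori estimates below, I would consider the auxiliary problem
\begin{equation*}
-\Delta_p u = \lambda w(x)\, f(u,\nabla v) + (u^+)^{p^*-1} \text{ in }\R^N,\qquad u\to 0 \text{ as }|x|\to\infty,
\end{equation*}
and define the solution map $\T:v\mapsto \T(v)$, which is set-valued because uniqueness of mountain pass solutions is not expected.

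To solve the frozen problem I would first handle the singularity by truncation: replace $f(s,\nabla v)$ by $f(\max\{s,\delta\},\nabla v)$ for $\delta>0$, producing a well-defined $C^1$ energy functional $J_{\delta,v}$ on $\mathcal{D}^{1,p}_0(\R^N)$. Using \ref{hypf} (in particular the growth exponent $r<p$ of the convective term) and \ref{hypw} (the weight belongs to $L^1\cap L^\infty$ and decays at infinity), one checks that $J_{\delta,v}$ has a mountain pass geometry uniformly in $v\in K$ and $\delta\in(0,1]$. The critical growth is dealt with by Lions' concentration-compactness principle at both finite points and infinity, showing that Palais-Smale sequences below the threshold $\tfrac{1}{N}S^{N/p}$ are relatively compact; a standard Talenti test-function computation yields that the mountain pass level stays below this threshold provided $\lambda$ is sufficiently small, which defines the constant $\Lambda>0$ in the statement. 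Once $u_{\delta,v}$ is found, the limit $\delta\to 0^+$ is carried out by invoking a uniform positive pointwise lower bound on compact sets (built via comparison with a subsolution coming from $f\ge c_1 s^{-\gamma}$ in \ref{hypf} and \eqref{weightbelow}), which tames the singular term and yields a weak solution $u_v$ of the frozen problem. The main technical obstacle here is producing all estimates uniformly with respect to $v\in K$: this is crucial in order for the set-valued map $\T$ to take relatively compact values and have a closed graph, and it is where the interplay between the subcritical exponent $r<p$, the integrability and decay of $w$, and the concentration-compactness analysis is most delicate.

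With $\T:K\rightrightarrows K$ in hand -- the inclusion $\T(K)\subseteq K$ following from the uniform-in-$v$ a priori $\mathcal{D}^{1,p}_0$-bound valid for $\lambda<\Lambda$ -- a fixed-point theorem for upper semicontinuous set-valued maps (Kakutani-Fan-Glicksberg or a suitable variant, possibly after passing to the closed convex hull of $\T(v)$) produces $u\in\T(u)$, i.e.\ a weak solution of \eqref{prob}. Boundedness $u\in L^\infty(\R^N)$ then follows from a De Giorgi/Moser iteration applied to $-\Delta_p u = \lambda w(x) f(u,\nabla u) + u^{p^*-1}$, exploiting the integrability of $w$ together with the critical Sobolev embedding to initialise the iteration; local $C^{1,\alpha}$ regularity is a consequence of DiBenedetto-Lieberman type results applied to the equation with now bounded right-hand side. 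Finally, the decay $u(x)\to 0$ as $|x|\to\infty$ is obtained by comparing $u$ outside a large ball with a radial supersolution of the form $C|x|^{-\beta}$, where $\beta$ is determined by the exponent $l$ in \eqref{weightdecay} (which is precisely tuned so that $\beta\ge (N-p)/(p-1)$); the critical term $u^{p^*-1}$ is by then a small perturbation of $-\Delta_p$ at infinity thanks to the $L^\infty$-bound and the finite energy of $u$, so the weak comparison principle for the $p$-Laplacian closes the argument.
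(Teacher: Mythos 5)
Your overall architecture — freeze the gradient, truncate the singularity, solve the frozen problem by the mountain pass theorem plus concentration-compactness, close via a fixed point, and finish with De Giorgi/DiBenedetto regularity and radial comparison for decay — matches the paper's strategy. There are, however, two points worth flagging, one of which is a genuine gap.

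The genuine gap is in the fixed-point step. You propose Kakutani–Fan–Glicksberg, "possibly after passing to the closed convex hull of $\T(v)$." Kakutani-type theorems require convex-valued maps, and the set of mountain-pass solutions of the frozen problem at a given $v$ is not convex (in fact, with a critical nonlinearity there is no reason to expect even contractibility). Replacing $\T(v)$ by $\overline{\mathrm{co}}\,\T(v)$ produces a fixed point $u\in\overline{\mathrm{co}}\,\T(u)$, but this $u$ is a convex combination of solutions, not itself a solution of the frozen problem, so the argument does not close. The paper circumvents non-convexity by proving that the solution set $\S(v)$ (restricted to the sub-level $J<c$) is nonempty, \emph{downward directed}, admits a minimum, and that $\S$ is \emph{lower} semicontinuous and compact (Lemmas \ref{selectionlemma}, \ref{Scompact}, \ref{lsc}); the single-valued minimal selection $\T(v)=\min\S(v)$ then inherits continuity and compactness, and Schauder's theorem applies. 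Proving lower semicontinuity is the hard step — it uses a rescaling and the recursion Lemma \ref{reclemma}, which is where the third smallness constraint \eqref{lsccond2} on $\lambda$ arises — and this machinery is exactly what substitutes for the missing convexity.

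A secondary difference is the truncation. You truncate the singular argument at a constant $\delta>0$ and then let $\delta\to 0^+$. In $\R^N$ with solutions vanishing at infinity, no constant lower bound holds globally, so the passage $\delta\to 0$ requires a quantitative subsolution-type lower bound (otherwise the integrability of $w\,u^{-\gamma}$ against test functions, and hence the weak formulation, is not protected as $\delta\to 0$). The paper sidesteps the $\delta$-limit entirely by truncating directly at the spatially-dependent subsolution $\underline{u}_\lambda(x)=\lambda^{1/(p-1+\gamma)}\underline{u}(x)$ of Lemma \ref{subsol}, whose decay is tuned by the hypothesis $l>N+\gamma\frac{N-p}{p-1}$ so that $w\underline{u}_\lambda^{-\gamma}\in L^{(p^*)'}(\R^N)\cap L^\infty(\R^N)$; this simultaneously makes the energy functional $C^1$ and guarantees (Remark \ref{subcomparisonrmk}) that every solution of the truncated problem lies above $\underline{u}_\lambda$, so the truncation is inactive and no additional limit is needed. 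Your $\delta$-approach could likely be made to work but it adds a layer of uniformity issues (uniform in $v$ \emph{and} in $\delta$) that the paper deliberately avoids. Finally, a small imprecision: the Palais-Smale threshold is not $\frac{1}{N}S^{N/p}$ but the strictly smaller $\hat{c}=\frac{S^{N/p}}{N}-\hat{C}\lambda^{p/(p-1+\gamma)}(L^{p'(r-1)}+1)$ of \eqref{hatc}, because the lower-order terms are not sign-definite; the Talenti computation must therefore be refined to land strictly below $\hat{c}$, which is what forces the bound \eqref{smallness3}.
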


Problem \eqref{prob} exhibits several features:
\begin{itemize}
\item the perturbation $f$ is singular, i.e., it blows up when the solution vanishes;
\item $f$ encompasses also convection terms, that is, depending on the gradient of the solution;
\item the `dominating' reaction term has critical growth;
\item the setting is the whole $\R^N$;
\item pointwise decay (at infinity) of the solutions is required.
\end{itemize}

\subsection{Comparison with previous results}
The main motivation behind the analysis of \eqref{prob} is that it mixes variational problems with double lack of compactness, since the loss of compactness of Sobolev's embedding occurs due to both the presence of the Sobolev critical exponent and the unboundedness of the domain, with non-variational problems, since convection terms destroy the variational structure. The multi-faced aspect of the problem suggests that it may be of interest to summarize the state of art of elliptic problems exhibiting, either separately or jointly, critical, singular, and convective reaction terms.

The pioneering paper by Brezis and Nirenberg \cite{bn} for the Laplacian in a bounded domain paved the way for critical problems in the last thirty years.
Later, existence and multiplicity of infinitely many solutions were obtained for the $p$-Laplacian in the critical case by Garcia Azorero and Peral Alonso \cite{ap} in bounded domains and by Huang \cite{h99} in $\R^N$, applying the mountain pass theorem and the theory of Krasnosel'ski\v{i} genus. More general operators were investigated by several authors: see \cite{ByF2, BFsc, prr}. In all of these papers the concentration compactness principles by Lions, Ben-naoum et al. \cite{L3,BNTW} play a crucial role in recovering compactness.

In the same years, Lazer, McKenna et al. \cite{LM,CLM} gave the decisive boost to the thereafter florid line of research of singular problems. Often, existence of solutions to $p$-Laplacian singular problems is obtained either by combining variational methods with sub-super-solution and truncation techniques (see, e.g., \cite{pesi}), or by regularizing the singular terms and using a priori estimates to recover compactness (see the appendix of \cite{CST}).

The presence of convection terms destroys the variational structure of even more basic problems than \eqref{prob}. For this reason, methods involving a priori estimates, Liouville-type theorems, and degree arguments \cite{BFNA,Ruiz,gisp} are widely employed in these situations when working on bounded domains; on the contrary, little is known in the entire Euclidean setting, and monotonicity techniques (as comparison and sub-super-solution theorems) are essential ingredients of the proofs: see \cite{fmt}.

Concerning the interaction of singular and convective nonlinearities, we address the reader to the survey \cite{glm} (see also \cite{glm2} for systems), which contains a rich bibliography on this topic. However, to the best of our knowledge, \cite{GG1,GMMou} are the only papers involving singular quasilinear elliptic problems in the whole space and with convective terms, regarding equations and systems, respectively. 

Finally, about singular and critical reactions, we refer to \cite{gst} for the case of bounded domains and \cite{mps} for the nonlocal setting; see also \cite{gksr} for the fractional $p$-Laplacian. As far as we know, there are no results in unbounded domains.

Theorem \ref{mainthm} is a first attempt at dealing with critical, convective, and singular elliptic equations in the entire $\R^N$. Moreover, up to our knowledge, our result is new even in the non-singular case, also for the Laplacian ($p=2$).

\subsection{Sketch of the proof}
Let us briefly summarize the proof of Theorem \ref{mainthm}, exposing the main techniques adopted in the paper.

First of all, we truncate and freeze the perturbation $f$, in order to cast the problem into a classical variational framework: indeed, freezing the convection terms (i.e., keeping them fixed) allows to get rid of them, which makes the problem to fall into a variational setting, while truncation guarantees $C^1$ regularity of the associated energy functional. After truncating and freezing, we obtain the problem
\begin{equation*}
\tag{${\rm \hat{P}}_\lambda$}
-\Delta_p u = \lambda a(x,u) + u_+^{p^*-1} \quad \mbox{in} \;\; \R^N,
\end{equation*}
where
\begin{equation*}
a(x,s):=w(x)f(\max\{s,\underline{u}_\lambda(x)\},\nabla v(x)) \quad \forall (x,s)\in \R^N\times\R,
\end{equation*}
being $\underline{u}_\lambda$ a suitable function (see \eqref{usub}) and $v\in\D^{1,p}_0(\R^N)$ fixed.

The central part of the paper is devoted to construct a solution $u\in\D^{1,p}_0(\R^N)$ to \eqref{varprob} by applying the mountain pass theorem to the energy functional $J$ associated with \eqref{varprob} (see Theorem \ref{existencefinal}). This can be done in three steps:
\begin{enumerate}
\item detecting, through concentration compactness principles, a particular energy level $\hat{c}$ (see \eqref{hatc}) under which compactness of $J$ is recovered, that is, proving that $J$ satisfies the Palais-Smale condition below $\hat{c}$ (Lemma \ref{PS});
\item showing that, for small $\lambda$'s, $J$ satisfies the mountain pass geometry, `joining' the origin with a Talenti's function (Lemma \ref{mountainpassgeometry});
\item ensuring that the mountain pass level lies below the `critical' Palais-Smale level $\hat{c}$, provided $\lambda$ is small enough (Lemma \ref{talentihatc}).
\end{enumerate}

Then we come back to the original problem. Since the truncation was performed at the level of a sub-solution $\underline{u}_\lambda$ (cf. Lemma \ref{subsol} and \eqref{usub}), any solution to \eqref{varprob} stays above $\underline{u}_\lambda$ (see Remark \ref{subcomparisonrmk}). It remains to unfreeze the convection terms: this is achieved by using set-valued analysis and fixed point theory. More specifically, the set-valued function $\S$ associating to each function $v\in\D^{1,p}_0(\R^N)$ the set of solutions to \eqref{varprob} having `low' energy (see \eqref{Sdef}) is compact (Lemma \ref{Scompact}) and, for small values of $\lambda$, lower semi-continuous (Lemma \ref{lsc}), so its selection $\T$ obtained by minimality (see \eqref{Tdef}) inherits compactness and continuity; thus, Schauder's theorem guarantees the existence of a fixed point of $\T$ (Theorem \ref{exsol}), that is, a solution to \eqref{prob}, provided a pointwise decay at infinity is ensured.

In order to conclude, both local $C^{1,\alpha}$ regularity and pointwise decay for solutions $u$ to \eqref{prob} lying in any energy level below $\hat{c}$ are proved (see Theorem \ref{regularity}). To do this, a global $L^\infty$ estimate is ensured via both De Giorgi's thechnique and the uniform equi-integrability provided by the concentration compactness principles (cf. Lemma \ref{ccplemma}); then a quantitative local $L^\infty$ gradient estimate guarantees local $C^{1,\alpha}$ regularity of the solution, as well as a high global summability on the weighted gradient term $w|\nabla u|^{r-1}$, which in turn ensures a global $L^\infty$ gradient estimate; finally, this information allows a comparison with a radial function which decays at infinity.

Before analyzing more technical aspects, it is worth mentioning that we provide quantitative estimates on the threshold $\Lambda$ in Theorem \ref{mainthm}: see \eqref{smallness}, \eqref{smallness2}, \eqref{smallness3}, and \eqref{lsccond2}.

\subsection{Technical issues}

Now we discuss some technical details, explicitly pointed out to emphasize some delicate, and somehow innovative, points in the proofs along the paper.

Firstly, we observe that the concentration compactness argument, carried out in a general form in Lemma \ref{ccplemma}, is crucial in this setting. Indeed, it is applied four times: (i) to ensure the Palais-Smale condition on some energy levels, as customary when dealing with critical problems (cf. \cite{ap,BBF,ByF2}); (ii) to get compactness of the set-valued function $\S$, usually recovered by working in the $C^1$ topology via Ascoli-Arzelà's theorem (cf. \cite{LMZ,GMMot,GMar}; see also \cite{BFNA}); (iii) to guarantee the lower semi-continuity of $\S$, which heavily relies on the compactness of the sub-level sets of energy functional; (iv) to provide $L^\infty$ estimates uniform with respect to the solution, using an equi-uniform integrability information which is, in general, not available without restrictions on the energy levels (cf., e.g., \cite{CGL}).

Secondly, it is worth pointing out that compactness and lower semi-continuity of $\S$ require additional work, compared to \cite{LMZ,GMMot,GMar}, and this is due not only to the concentration compactness issues mentioned above. About compactness, working in Beppo Levi spaces causes the loss of a.\,e.\,convergence of gradient terms $(\nabla v_n)$ with fixed $(v_n)$, forcing the use of a monotonicity property, as the $(S_+)$ property for the $p$-Laplacian in $\D^{1,p}_0(\R^N)$ (cf. \cite{MMM}). Concerning lower semi-continuity, the super-linear growth of the reaction term requires the joint usage of a scaling argument and a fine recursive estimate to get suitable energy bounds.

Lastly, we spend a few words about regularity and decay of solutions $u$ to \eqref{prob}. Unlike the existence result, which requires only $w\underline{u}_\lambda^{-\gamma}\in L^{(p^*)'}(\R^N)$, we need a decay on $\underline{u}_\lambda$ to get $C^{1,\alpha}$ local estimates. Indeed, the decay of $\underline{u}_\lambda$ guarantees $w\underline{u}_\lambda^{-\gamma}\in L^\infty(\R^N)$, producing local $L^\infty$ gradient estimates which imply $C^{1,\alpha}$ local regularity. The decay of $\underline{u}_\lambda$ is a consequence of the decays of the fundamental solution of the $p$-Laplacian and $w$; the latter is also used to refine $L^\infty$ bounds of $\nabla u$, deducing global estimates from local ones, and to ensure the pointwise decay of $u$.

\subsection{Structure of the paper} In Section \ref{prel} we give a few classical definitions and state some basic results, such as the concentration compactness principles, the mountain pass theorem, and a fixed point theorem, together with minor lemmas which will be useful in the sequel. Section \ref{trfr} is devoted to the study of a truncated and frozen problem, which leads to an existence result. In Section \ref{unfr} the unfreezing the convection term is carried out via set-valued analysis. Finally, Section \ref{reg} contains the last part of the proof of Theorem \ref{mainthm}, concerning regularity and decay of solutions.

\section{Preliminaries}\label{prel}
\subsection{Notations}
We indicate with $B_r(x)$ the $\R^N$-ball of center $x\in\R^N$ and radius $r>0$, omitting $x$ when it is the origin. The symbols $\overline{B}$, $\partial B$, $B^e$ stand, respectively, for the closure, the boundary, and the exterior of the ball $B$. Given any $A\subseteq \R^N$, we write $\chi_A$ to indicate the characteristic function of $A$. For any $N$-dimensional Lebesgue measurable set $\Omega$, by $|\Omega|$ we mean its $N$-dimensional Lebesgue measure.

Given a real-valued function $\varphi$, we indicate its positive (resp., negative) part with $\varphi_+:=\max\{\varphi,0\}$ (resp., $\varphi_-:=\max\{-\varphi,0\}$). We abbreviate with $\{u>v\}$ the set $\{x\in\R^N: \, u(x)>v(x)\}$, and similarly for $\{u<v\}$, etc.

We indicate with $X^*$ the dual of a Banach space $X$, while $\langle\cdot,\cdot\rangle$ stand for the duality brackets. Given two Banach spaces $X,Y$, the continuous embedding of $X$ into $Y$ is indicated by $X\hookrightarrow Y$; if the embedding is compact, we write $X\compact Y$. If a sequence $(u_n)$ strongly converges to $u$ we write $u_n\to u$; if the convergence is in weak sense, we use $u_n\rightharpoonup u$. The letter $S$ denotes the Sobolev constant; see the next subsection for details.

Let $M(\R^N,\R)$ be the space of all finite signed Radon measures. Concerning convergence of measures $(\mu_n)\subseteq M(\R^N,\R)$, we write $\mu_n\stackrel{*}{\rightharpoonup}\mu$ and $\mu_n\rightharpoonup\mu$ to signify tight and weak convergence, respectively (the definitions are given in the next subsection). The symbol $\delta_x$ indicates the Dirac delta of mass 1 concentrated at $x\in\R^N$.

The letter $C$ denotes a positive constant which may change its value at each passage; subscripts on $C$ emphasize its dependence from the specified parameters. For the sake of readability, we also write `in $\R^N$' instead of `a.e.\,in $\R^N$'.
\subsection{The functional setting}
We denote by $C^\infty_c(\R^N)$ the space of the compactly supported test functions on $\R^N$, while $C^{1,\alpha}_\loc(\R^N)$, being $\alpha\in(0,1]$, denotes the space of continuously differentiable functions whose gradient is locally $\alpha$-H\"older continuous.

Given any measurable set $\Omega\subseteq \R^N$ and $q\in[1,+\infty]$, $L^q(\Omega)$ stands for the standard Lebesgue space, whose norm will be indicated with $\|\cdot\|_{L^q(\Omega)}$, or simply $\|\cdot\|_q$ when $\Omega=\R^N$. For $p\in(1,N)$, we will also make use of the Beppo Levi space $\D^{1,p}_0(\R^N)$, which is the closure of $C^\infty_c(\R^N)$ with respect to the norm
$$\|u\|_{\D^{1,p}_0(\R^N)}:=\|\nabla u\|_p.$$
Beppo Levi spaces are reflexive, separable Banach spaces; we indicate with $\D^{-1,p'}(\R^N)$ the dual of $\D^{1,p}_0(\R^N)$. Sobolev's theorem ensures that $\D^{1,p}_0(\R^N) \hookrightarrow L^{p^*}(\R^N)$; the best constant $c$ in the Sobolev inequality $\|u\|_{L^{p^*}(\R^N)}\leq c \|u\|_{\D^{1,p}_0(\R^N)}$ is $S^{-1/p}$, being
$$ S:= \inf_{u\in \D^{1,p}_0(\R^N)\setminus\{0\}} \frac{\|\nabla u\|_p^p}{\|u\|_{p^*}^p}. $$
According to Sobolev's theorem, one has
$$\D^{1,p}_0(\R^N) = \left\{u\in L^{p^*}(\R^N): \, |\nabla u|\in L^p(\R^N)\right\}.$$ Incidentally, we recall that $\D^{1,p}_0(\R^N) \compact L^q(\Omega)$ for all bounded, measurable sets $\Omega$ and all $q\in[1,p^*)$ (this is a consequence of Rellich-Kondrachov's theorem; cf. \cite[Theorem 9.16]{B}).

A sequence of measures $(\mu_n)\subseteq M(\R^N,\R)$ converges tightly to a measure $\mu$, written as $\mu_n \stackrel{*}{\rightharpoonup} \mu$, if
\begin{equation}
\label{measconv}
\int_{\R^N} f \, {\rm d}\mu_n \to \int_{\R^N} f \, {\rm d}\mu \quad \mbox{for all} \;\; f\in C_b(\R^N),
\end{equation}
where $C_b(\R^N)$ is the space of the bounded, continuous functions on $\R^N$. On the other hand, $(\mu_n)\subseteq M(\R^N,\R)$ is said to converge weakly to $\mu$, written as $\mu_n \rightharpoonup \mu$, if \eqref{measconv} holds for all $f\in C_0(\R^N)$, being $C_0(\R^N)$ the space of the continuous functions that vanish at infinity. Since $C_0(\R^N)\subseteq C_b(\R^N)$, tight convergence implies weak convergence. Moreover, if $(\mu_n)\subseteq M(\R^N,\R)$ is bounded, then (up to sub-sequences) $\mu_n\rightharpoonup \mu$ for some $\mu\in M(\R^N,\R)$: see \cite[Proposition 1.202]{FL}. Notice that weak convergence is the `natural' convergence in the space $M(\R^N,\R)$, since $M(\R^N,\R)=(C_0(\R^N))'$. It is worth pointing out that tight convergence can be seen as non-concentration at infinity: see \cite{BBF, BBFS}.

Let $(X,\|\cdot\|_X)$ be a Banach space and $J$ be a functional of class $C^1$ (hereafter indicated as $J\in C^1(X)$). A sequence $(u_n)\subseteq X$ is a Palais-Smale sequence of level $c\in\R$ if $J(u_n) \to c$ in $\R$ and $J'(u_n)\to 0$ in $X^*$. If each Palais-Smale sequence of level $c$ admits a strongly convergent sub-sequence, then $J$ is said to satisfy the Palais-Smale condition at level $c$; briefly, $J$ satisfies ${\rm (PS)}_c$.

A poset (that is, a partially ordered set) $(A,\leq)$ is said to be downward directed if for any $a,b\in A$ there exists $c\in A$ such that $c\leq a$ and $c\leq b$. We recall that, if $a$ is a minimal element of the downward directed poset $A$, then $a=\min A$: indeed, since $A$ is downward directed, for any $b\in A$ there exists $c\in A$ such that $c\leq a$ and $c\leq b$, and minimality of $a$ forces $c=a$, so that $c\leq b$ for all $b\in A$, i.e., $c=\min A$.

Let $(X,d_X),(Y,d_Y)$ be two metric spaces. A set-valued function $\S:X\to2^Y$ is said to be lower semi-continuous if, for any $x_n\to x$ in $X$ and $y\in\S(x)$, there exists $(y_n)\subseteq Y$ such that $y_n\to y\in Y$ and $y_n\in\S(x_n)$ for all $n\in\N$; it is said to be compact if, for any bounded $K\subseteq X$, the set $\S(K)$ is relatively compact in $Y$.

\subsection{Some tools}
First of all, we recall a simple result concerning weak convergence of the positive part of functions; although it is folklore, we make its proof.
\begin{lemma}
\label{posparts}
Let $1<p<N$ and $(u_n)\subseteq\D^{1,p}_0(\R^N)$, $u\in\D^{1,p}_0(\R^N)$ be such that $u_n \rightharpoonup u$ in $\D^{1,p}_0(\R^N)$. Then $(u_n)_+ \rightharpoonup u_+$ in $\D^{1,p}_0(\R^N)$.
\end{lemma}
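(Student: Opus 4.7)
The plan is to combine a uniform bound on $(u_n)_+$ in $\mathcal{D}^{1,p}_0(\R^N)$ with an a.\,e.\,identification of the weak limit, then conclude via Urysohn's subsequence principle.

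First I would observe that $(u_n)_+$ is bounded in $\mathcal{D}^{1,p}_0(\R^N)$. Indeed, weak convergence implies $\sup_n \|\nabla u_n\|_p < +\infty$, and the chain rule for Lipschitz truncations gives $\nabla (u_n)_+ = \chi_{\{u_n>0\}} \nabla u_n$ a.\,e., so $\|\nabla (u_n)_+\|_p \leq \|\nabla u_n\|_p$. By reflexivity of $\mathcal{D}^{1,p}_0(\R^N)$, every subsequence of $((u_n)_+)$ admits a further subsequence (not relabelled) that weakly converges to some $v\in\mathcal{D}^{1,p}_0(\R^N)$.

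Next I would identify $v=u_+$. From $u_n \rightharpoonup u$ in $\mathcal{D}^{1,p}_0(\R^N)$ and the compact embedding $\mathcal{D}^{1,p}_0(\R^N)\compact L^q(B_R)$ for any $R>0$ and $q\in[1,p^*)$ (Rellich-Kondrachov, recalled in the excerpt), a diagonal argument produces a subsequence with $u_n\to u$ a.\,e.\,in $\R^N$. Continuity of the positive-part map yields $(u_n)_+\to u_+$ a.\,e.\,in $\R^N$. Applying the same compact embedding to the bounded sequence $((u_n)_+)$ extracted above, we get $(u_n)_+\to v$ in $L^1(B_R)$ for every $R>0$, hence (along a further subsequence) a.\,e.\,in $\R^N$. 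Uniqueness of a.\,e.\,limits forces $v=u_+$ a.\,e.\,in $\R^N$.

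Finally I would upgrade this subsequential convergence to the full sequence. Since every subsequence of $((u_n)_+)$ contains a further subsequence weakly converging to the same limit $u_+$ in the reflexive space $\mathcal{D}^{1,p}_0(\R^N)$, the standard Urysohn principle implies $(u_n)_+ \rightharpoonup u_+$ in $\mathcal{D}^{1,p}_0(\R^N)$. The only delicate point is ensuring that the weak limit $v$ coincides with $u_+$ a.\,e., but this is handled cleanly by the double a.\,e.\,convergence obtained through the local compact embedding.
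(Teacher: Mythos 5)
Your proposal is correct and takes essentially the same approach as the paper: establish boundedness of $((u_n)_+)$ in $\D^{1,p}_0(\R^N)$ via the chain rule (the paper cites Stampacchia's lemma for this), use the local compact embedding to identify the weak subsequential limit with $u_+$, and conclude by reflexivity and the subsequence principle. The only cosmetic difference is that the paper identifies the limit through strong $L^p(B_R)$ convergence and dominated convergence, whereas you pass through a.\,e.\,convergence and $L^1(B_R)$; and you are somewhat more explicit about invoking Urysohn's principle at the end, which the paper compresses into ``arbitrariness of $R$.''
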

\begin{proof}
Fix any $R>0$. Since $\D^{1,p}_0(\R^N)\compact L^p(B_R)$, we have $u_n\to u$ in $L^p(B_R)$. Then, up to sub-sequences, $u_n\to u$ in $B_R$ and there exists $U\in L^p(B_R)$ such that $|u_n|\leq U$ in $B_R$ for all $n\in\N$ (see \cite[Theorem 4.9]{B}). Thus, Lebesgue's dominated convergence theorem and the continuity of the real function $t\mapsto t_+$ imply $(u_n)_+\to u_+$ in $L^p(B_R)$.

We observe that $(u_n)$ is bounded in $\D^{1,p}_0(\R^N)$. Stampacchia's lemma (cf., e.g., \cite[Theorem 4.4]{EG}) guarantees that $\nabla (v_+)=\chi_{\{v>0\}}\nabla v$ for all $v\in \D^{1,p}_0(\R^N)$, and this implies that $((u_n)_+)$ is bounded in $\D^{1,p}_0(\R^N)$. By reflexivity, there exists $v\in\D^{1,p}_0(\R^N)$ such that $(u_n)_+ \rightharpoonup v$ in $\D^{1,p}_0(\R^N)$. Reasoning as above, we infer $(u_n)_+ \to v$ in $L^p(B_R)$, forcing $v=u_+$. Arbitrariness of $R$ gives $(u_n)_+ \rightharpoonup u_+$ in $\D^{1,p}_0(\R^N)$.
\end{proof}

Let us introduce two lemmas which are useful to handle concentration of compactness at points and at infinity, respectively.
\begin{lemma}[Lions, {\cite[Lemma I.1]{L3}}]
\label{lions}
Let $1\leq p<N$. Suppose $(u_n) \subseteq \D^{1,p}_0(\R^N)$ to be such that $u_n \rightharpoonup u$ in $\D^{1,p}_0(\R^N)$, and both $|\nabla u_n|^p \rightharpoonup \mu$, $|u_n|^{p^*} \stackrel{*}{\rightharpoonup} \nu$ in the sense of measures, for some $u\in\D^{1,p}_0(\R^N)$ and $\mu,\nu$ bounded non-negative measures on $\R^N$. Then there exist some at most countable set $\A$, a family of distinct points $(x_j)_{j\in\A}\subseteq \R^N$, and two families of numbers $(\nu_j)_{j\in\A}, (\mu_j)_{j\in\A}\subseteq (0,+\infty)$ fulfilling
\begin{equation}
\label{ineqmeasures}
\nu=|u|^{p^*}+\sum_{j\in\A} \nu_j \delta_{x_j},\qquad
\mu\geq |\nabla u|^p+\sum_{j\in\A} \mu_j \delta_{x_j},\qquad
S\nu_j^{p/p^*} \leq \mu_j \quad \mbox{for all} \;\; j\in\A.
\end{equation}
\end{lemma}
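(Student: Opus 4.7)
My plan is to combine the Brezis-Lieb lemma with the Sobolev inequality applied to $C_c^\infty$ cut-offs of the $u_n$'s, together with a measure-theoretic reverse-H\"older argument. First I would decompose $u_n = u + v_n$, where $v_n := u_n - u \rightharpoonup 0$ in $\D^{1,p}_0(\R^N)$; by Rellich-Kondrachov plus a diagonal subsequence, $v_n \to 0$ in $L^p_\loc(\R^N)$ and a.e. The Brezis-Lieb lemma then yields $\||u_n|^{p^*} - |u|^{p^*} - |v_n|^{p^*}\|_1 \to 0$, so $|v_n|^{p^*} \stackrel{*}{\rightharpoonup} \tilde{\nu} := \nu - |u|^{p^*} \geq 0$. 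A further subsequence gives $|\nabla v_n|^p \stackrel{*}{\rightharpoonup} \tilde{\mu}$ for some bounded non-negative measure $\tilde{\mu}$, and weak lower semicontinuity of $\|\cdot\|_{L^p}$ tested against non-negative $\phi \in C_c(\R^N)$ yields $\mu \geq |\nabla u|^p$.

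Next, for $\phi \in C_c^\infty(\R^N)$, Sobolev applied to $\phi v_n \in \D^{1,p}_0(\R^N)$ together with Minkowski gives
\[
S^{1/p}\|\phi v_n\|_{p^*} \leq \|\phi \nabla v_n\|_p + \|v_n \nabla \phi\|_p,
\]
and the last term vanishes as $n \to \infty$ by $L^p_\loc$-convergence of $v_n$. Passing to the limit produces the reverse-H\"older inequality
\[
S \left(\int_{\R^N} |\phi|^{p^*}\,{\rm d}\tilde{\nu}\right)^{p/p^*} \leq \int_{\R^N} |\phi|^p\,{\rm d}\tilde{\mu}, \qquad \forall \phi \in C_c^\infty(\R^N).
\]
Monotone approximation of $\chi_U$ (for $U$ open) plus outer regularity upgrades this to $\tilde{\nu}(B) \leq S^{-p^*/p}\tilde{\mu}(B)^{p^*/p}$ for every Borel $B$, so $\tilde{\nu} \ll \tilde{\mu}$ with Radon-Nikodym density $h$. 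Lebesgue differentiation of measures gives
\[
h(x) = \lim_{r\to 0^+} \frac{\tilde{\nu}(B_r(x))}{\tilde{\mu}(B_r(x))} \leq S^{-p^*/p} \lim_{r\to 0^+} \tilde{\mu}(B_r(x))^{p^*/p-1}
\]
for $\tilde{\mu}$-a.e. $x$, and since $p^*/p > 1$, the right-hand side vanishes unless $x$ is an atom of $\tilde{\mu}$. Finiteness of $\tilde{\mu}$ caps its atoms at countably many distinct points $(x_j)_{j\in\A}$, whence $\tilde{\nu} = \sum_{j\in\A} \nu_j \delta_{x_j}$ with $\nu_j > 0$.

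Finally, I would repeat the Sobolev-Minkowski computation with $u_n$ in place of $v_n$; now the extra term $(\int |u|^p |\nabla\phi|^p)^{1/p}$ survives. Choosing $\phi_R(x) := \psi((x-x_j)/R)$ with $\psi \in C_c^\infty(B_1)$ and $\psi(0) = 1$, and sending $R \to 0^+$: dominated convergence gives $\int |\phi_R|^{p^*}\,{\rm d}\nu \to \nu_j$ and $\int |\phi_R|^p\,{\rm d}\mu \to \mu(\{x_j\})$, while H\"older combined with the identity $N(1 - p/p^*) = p$ yields
\[
\int |u|^p |\nabla \phi_R|^p \leq C R^{-p} \cdot R^{N(1-p/p^*)} \|u\|_{L^{p^*}(B_{cR}(x_j))}^p = C \|u\|_{L^{p^*}(B_{cR}(x_j))}^p \to 0.
\]
This produces $S \nu_j^{p/p^*} \leq \mu_j := \mu(\{x_j\})$. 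Since $|\nabla u|^p$ is atomless and $\mu - |\nabla u|^p \geq 0$, we have $(\mu - |\nabla u|^p)(\{x_j\}) = \mu_j$, which gives $\mu \geq |\nabla u|^p + \sum_{j\in\A} \mu_j\delta_{x_j}$. I expect the main obstacle to be the atomicity step in the second paragraph: turning the smooth-test-function inequality into $\tilde{\nu} = \sum_j \nu_j \delta_{x_j}$ requires the Radon-Nikodym plus Lebesgue-differentiation argument and crucially exploits the super-criticality $p^*/p > 1$. The remaining steps are essentially careful manipulations of Sobolev's inequality.
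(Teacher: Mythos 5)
The paper itself does not prove Lemma \ref{lions}; it simply cites it as \cite[Lemma I.1]{L3}, so there is no in-paper proof to compare against. Your argument is a correct and essentially standard proof of Lions' second concentration-compactness lemma: the Brezis--Lieb splitting to pass from $\nu$ to $\tilde\nu = \nu - |u|^{p^*}$, the Sobolev--Minkowski reverse-H\"older inequality with $C_c^\infty$ cut-offs, the Radon--Nikodym/Besicovitch-differentiation argument exploiting $p^*/p > 1$ to reduce $\tilde\nu$ to a countable sum of Dirac masses, and the shrinking-ball computation with $u_n$ (rather than $v_n$) to extract $S\nu_j^{p/p^*} \le \mu(\{x_j\})$, finishing with the observation that the Lebesgue part $|\nabla u|^p$ is non-atomic so that $\mu \ge |\nabla u|^p + \sum_j \mu_j\delta_{x_j}$.

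One small notational slip worth flagging: you write $|\nabla v_n|^p \stackrel{*}{\rightharpoonup} \tilde\mu$, but boundedness in $L^p$ only guarantees weak convergence of $|\nabla v_n|^p$ against $C_0(\R^N)$ (the paper's $\rightharpoonup$), not tight convergence (the paper's $\stackrel{*}{\rightharpoonup}$). This does not affect your argument, since every test function used in the reverse-H\"older and cut-off steps is compactly supported, but the distinction matters elsewhere in the paper (see Lemma \ref{bennaoum}, where the concentration at infinity of $|\nabla u_n|^p$ is tracked by $\mu_\infty$ precisely because only weak, not tight, convergence is available for the gradient measures).
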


Note that Lemma \ref{lions} requires the tight convergence of the measures involving the critical Sobolev exponent, but the proof of this condition reveals to be rather difficult and technical. Thus, Ben-Naoum et al. established a version of the Lemma \ref{lions} known as {\it escape to infinity principle}, where the concentration at infinity is enclosed in the parameters $\nu_\infty$ and $\mu_\infty$.

\begin{lemma}[Ben-Naoum, Troestler, Willem, {\cite[Lemma 3.3]{BNTW}}]
\label{bennaoum}
Let $1\leq p<N$. Suppose that $(u_n) \subseteq \D^{1,p}_0(\R^N)$ is bounded and define
$$ \nu_\infty := \lim_{R\to+\infty} \limsup_{n\to\infty} \int_{B_R^e} |u_n|^{p^*} \dx, \quad \mu_\infty := \lim_{R\to+\infty} \limsup_{n\to\infty} \int_{B_R^e} |\nabla u_n|^p \dx.$$
Then, it holds $S\nu_\infty^{p/p^*} \leq \mu_\infty$ and
\begin{equation*}
\begin{split}
\limsup_{n\to\infty} \int_{\R^N} |u_n|^{p^*} \dx = \int_{\R^N} \, {\rm d}\nu + \nu_\infty, \qquad
\limsup_{n\to\infty} \int_{\R^N} |\nabla u_n|^p \dx = \int_{\R^N} \, {\rm d}\mu + \mu_\infty, 
\end{split}
\end{equation*}
where $\nu,\mu$ are as in Lemma \ref{lions}.
\end{lemma}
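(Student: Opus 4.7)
\textbf{Proof plan for Lemma \ref{bennaoum}.} The plan is to localize mass inside and outside a large ball via cut-off functions, using weak convergence of measures against compactly supported tests for the interior contribution and the definitions of $\mu_\infty,\nu_\infty$ for the exterior contribution. After a diagonal extraction, I may assume that $u_n\rightharpoonup u$ in $\D^{1,p}_0(\R^N)$, that $|\nabla u_n|^p\rightharpoonup\mu$ and $|u_n|^{p^*}\rightharpoonup\nu$ in $M(\R^N,\R)$ (possible since both measure sequences are bounded and $M(\R^N,\R)=(C_0(\R^N))'$), and that $\int_{\R^N}|\nabla u_n|^p\dx$ and $\int_{\R^N}|u_n|^{p^*}\dx$ realize their $\limsup$ along the chosen subsequence.

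For each $R>0$ pick $\varphi_R\in C^\infty_c(\R^N)$ with $0\leq\varphi_R\leq 1$, $\varphi_R\equiv 1$ on $B_R$, $\varphi_R\equiv 0$ on $B_{2R}^e$, and split
\begin{equation*}
\int_{\R^N}|u_n|^{p^*}\dx=\int_{\R^N}\varphi_R|u_n|^{p^*}\dx+\int_{\R^N}(1-\varphi_R)|u_n|^{p^*}\dx.
\end{equation*}
Since $\varphi_R\in C_0(\R^N)$, the first integral converges (in $n$) to $\int_{\R^N}\varphi_R\,{\rm d}\nu$; as $R\to\infty$, monotone convergence gives $\int_{\R^N}\varphi_R\,{\rm d}\nu\to\nu(\R^N)$. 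Hence the second integral converges (in $n$) to some $\rho(R)$, and the sandwich $\chi_{B_{2R}^e}\leq 1-\varphi_R\leq\chi_{B_R^e}$ yields
\begin{equation*}
\limsup_{n\to\infty}\int_{B_{2R}^e}|u_n|^{p^*}\dx\leq\rho(R)\leq\limsup_{n\to\infty}\int_{B_R^e}|u_n|^{p^*}\dx,
\end{equation*}
so $\rho(R)\to\nu_\infty$ as $R\to\infty$ by the squeeze theorem. Adding the two contributions gives the first identity in the conclusion; the identical argument with $|\nabla u_n|^p$, $\mu$, $\mu_\infty$ in place of $|u_n|^{p^*}$, $\nu$, $\nu_\infty$ yields the second.

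The Sobolev inequality at infinity then comes from testing $S\|v\|_{p^*}^p\leq\|\nabla v\|_p^p$ on $v:=\psi_R u_n\in\D^{1,p}_0(\R^N)$, where $\psi_R\in C^\infty(\R^N)$ satisfies $\psi_R\equiv 0$ on $B_R$, $\psi_R\equiv 1$ on $B_{2R}^e$, $0\leq\psi_R\leq 1$, and $|\nabla\psi_R|\leq C/R$. Using $\|\psi_R u_n\|_{p^*}^{p^*}\geq\int_{B_{2R}^e}|u_n|^{p^*}\dx$, the bound $|\nabla(\psi_R u_n)|^p\leq(1+\varepsilon)\psi_R^p|\nabla u_n|^p+C_\varepsilon|u_n|^p|\nabla\psi_R|^p$, and $\psi_R^p\leq\chi_{B_R^e}$, I obtain
\begin{equation*}
S\Big(\int_{B_{2R}^e}|u_n|^{p^*}\dx\Big)^{p/p^*}\leq (1+\varepsilon)\int_{B_R^e}|\nabla u_n|^p\dx+C_\varepsilon\int_{B_{2R}\setminus B_R}|u_n|^p|\nabla\psi_R|^p\dx.
\end{equation*}
Taking $\limsup_n$ and then $R\to\infty$, the first summand contributes $(1+\varepsilon)\mu_\infty$, and $\varepsilon\to 0^+$ then gives $S\nu_\infty^{p/p^*}\leq\mu_\infty$, provided the cross-term vanishes.

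The main obstacle is exactly this cross-term: H\"older's inequality with exponents $p^*/p$ and $N/p$, together with $|B_{2R}\setminus B_R|^{p/N}\leq CR^p$, bounds it by $C\|u_n\|_{L^{p^*}(B_{2R}\setminus B_R)}^p$, which is scale-invariant in $R$ and need not vanish uniformly in $n$. To close the argument I would first pass to the $n$-limit on the bounded annulus via Rellich--Kondrachov ($u_n\to u$ in $L^p(B_{2R}\setminus B_R)$), rewrite the resulting bound as $C\|u\|_{L^{p^*}(B_{2R}\setminus B_R)}^p$ by H\"older, and finally let $R\to\infty$: this vanishes by absolute continuity of the $L^{p^*}$-integral, since $u\in L^{p^*}(\R^N)$.
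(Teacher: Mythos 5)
This lemma is quoted from Ben-Naoum--Troestler--Willem and the paper supplies no proof of its own, so there is no in-paper argument to compare against; your reconstruction follows the standard cutoff-and-splitting strategy of that reference, and the main steps (localization via $\varphi_R$, passing $n\to\infty$ then $R\to\infty$ for the interior/exterior decomposition, and recovering the Sobolev inequality at infinity by testing with $\psi_R u_n$ while killing the cross-term through Rellich--Kondrachov on the annulus and H\"older plus absolute continuity of the $L^{p^*}$-integral of $u$) are all correct. Two small technicalities are worth tightening. First, you extract a further subsequence along which both $\int_{\R^N}|u_n|^{p^*}\dx$ and $\int_{\R^N}|\nabla u_n|^p\dx$ simultaneously realize their $\limsup$'s; two $\limsup$'s need not be attained along a common subsequence, so the two identities should be proved with separate extractions or, more cleanly, with no extraction at all, using that $\limsup_n(a_n+b_n)=\lim_n a_n+\limsup_n b_n$ whenever $a_n$ converges (here $a_n=\int\varphi_R|u_n|^{p^*}\dx$, which does converge by the weak-$*$ convergence of the measures). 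Second, and related, $\nu_\infty$ and $\mu_\infty$ depend on the subsequence considered, so after the second extraction the $\limsup$'s appearing in your sandwich are a priori taken over a smaller subsequence and could drop below the original $\nu_\infty$; eliminating the second extraction removes this ambiguity as well. Neither point affects the substance of the argument, and the Sobolev-at-infinity part is handled carefully and correctly.
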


We will use Lemmas \ref{lions}--\ref{bennaoum} to avoid concentration both at points, i.e. $\nu_j=\mu_j=0$ for all $j\in\A$, and at
infinity, i.e. $\nu_\infty=\mu_\infty=0$.

\begin{rmk}
\label{nujmuj}
Although not explicitly stated in \cite[Lemma I.1]{L3}, for all $j\in\A$ one has $\nu_j=\nu(\{x_j\})$ and one can assume $\mu_j=\mu(\{x_j\})$ in Lemma \ref{lions} (see, e.g., \cite[Theorem 2.5]{MS}). Indeed, for all $\eps>0$, \eqref{ineqmeasures} implies
$$ \nu(B_\eps(x_j))=\int_{B_\eps(x_j)} |u|^{p^*} \dx +\sum_{i\in\A} \nu_i \delta_{x_i}(B_\eps(x_j)) \quad \mbox{for all} \;\; j\in\A, $$
so letting $\eps\to 0$ produces $\nu(\{x_j\})=\nu_j$ for all $j\in\A$. On the other hand, it is readily seen that $\mu\geq \mu(\{x_j\})\delta_{x_j}$ for all $j\in\A$. Since $\{|\nabla u|^p\}\cup\{\delta_{x_j}: \, j\in\A\}$ is a set consisting of pairwise mutually singular measures, as well as \eqref{ineqmeasures} ensures $\mu\geq |\nabla u|^p$, one has
$$ \mu\geq |\nabla u|^p + \sum_{j\in\A} \mu(\{x_j\})\delta_{x_j}. $$
Using \eqref{ineqmeasures} again, one has $\mu(\{x_j\})\geq \mu_j \geq S\nu_j^{p/p^*}$. Thus, replacing $\mu_j$ with $\mu(\{x_j\})$ for all $j\in\A$ leaves \eqref{ineqmeasures} unaltered.
\end{rmk}

Now we state the refined versions of two pivotal theorems in Nonlinear Analysis.
\begin{thm}[Ambrosetti, Rabinowitz; cf. {\cite[Theorem 5.40]{MMP}}]
\label{mountainpass}
Let $(X,\|\cdot\|_X)$ be a Banach space and $J\in C^1(X)$. Let $u_0,u_1\in X$, $\rho>0$ such that the `mountain pass geometry' is fulfilled, i.e.,
$$ \max\{J(u_0),J(u_1)\} < \inf_{\partial B_\rho(u_0)} J =: b \quad \mbox{and} \quad \|u_1-u_0\|_X > \rho.$$
Set
$$\Phi:= \{\phi\in C^0([0,1];X): \, \phi(0)=u_0, \, \phi(1)=u_1\}, \quad c_M:= \inf_{\phi\in\Phi} \sup_{t\in[0,1]} J(\phi(t)).$$
If $J$ satisfies ${\rm (PS)}_{c_M}$, then $c_M\geq b$ and there exists $u\in X$ such that both $J(u)=c_M$ and $J'(u)=0$. Moreover, if $c_M=b$, then $u$ can be taken on $\partial B_\rho(u_0)$.
\end{thm}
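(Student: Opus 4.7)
The plan is to establish Theorem \ref{mountainpass} in three stages: (i) the inequality $c_M \geq b$, (ii) the existence of a critical point at level $c_M$ in the "strict" case $c_M>b$, and (iii) the refinement asserting that the critical point can be taken on $\partial B_\rho(u_0)$ when $c_M=b$. First, I would exploit the fact that every $\phi\in\Phi$ satisfies $\|\phi(0)-u_0\|_X=0<\rho<\|u_1-u_0\|_X=\|\phi(1)-u_0\|_X$, so by continuity of $t\mapsto\|\phi(t)-u_0\|_X$ and the intermediate value theorem there exists $t^*\in(0,1)$ with $\phi(t^*)\in\partial B_\rho(u_0)$. Hence $\sup_{t\in[0,1]}J(\phi(t))\geq J(\phi(t^*))\geq b$, and taking the infimum over $\Phi$ gives $c_M\geq b$.

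For the existence of a critical point I would argue by contradiction using a deformation/pseudo-gradient flow construction. Assuming no $u\in X$ satisfies simultaneously $J(u)=c_M$ and $J'(u)=0$, the condition $({\rm PS})_{c_M}$ forces the existence of $\bar\epsilon,\delta>0$ with $\|J'(u)\|_{X^*}\geq \delta$ whenever $|J(u)-c_M|\leq 2\bar\epsilon$ (otherwise one could extract a $({\rm PS})_{c_M}$ sequence that, by hypothesis, converges to a critical point at level $c_M$). Choose $\epsilon\in(0,\bar\epsilon]$ small enough that $\epsilon<\tfrac{1}{2}(b-\max\{J(u_0),J(u_1)\})$, so that $u_0,u_1\notin J^{-1}([c_M-2\epsilon,c_M+2\epsilon])$. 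The standard deformation lemma yields a continuous map $\eta:[0,1]\times X\to X$ that fixes the complement of $J^{-1}([c_M-2\epsilon,c_M+2\epsilon])$ and sends the sublevel set $\{J\leq c_M+\epsilon\}$ into $\{J\leq c_M-\epsilon\}$. Selecting, by the definition of $c_M$, a path $\phi\in\Phi$ with $\sup J\circ\phi\leq c_M+\epsilon$, the deformed path $\tilde\phi(t):=\eta(1,\phi(t))$ still belongs to $\Phi$ (the endpoints being untouched) and verifies $\sup J\circ\tilde\phi\leq c_M-\epsilon$, contradicting the definition of $c_M$.

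The refinement when $c_M=b$ is more delicate because the above separation of $u_0,u_1$ from the critical band fails. Here I would invoke the Ghoussoub--Preiss min-max principle with \emph{dual} closed set $F:=\partial B_\rho(u_0)$: since each $\phi\in\Phi$ meets $F$ by the argument of the first paragraph and $\inf_F J=b=c_M$, the pair $(\Phi,F)$ is dual in the Ghoussoub--Preiss sense, producing a $({\rm PS})_{c_M}$ sequence $(u_n)\subseteq X$ with $\mathrm{dist}(u_n,F)\to 0$. By $({\rm PS})_{c_M}$, a subsequence converges strongly to some $u\in X$; closedness of $F$ yields $u\in\partial B_\rho(u_0)$, while continuity of $J$ and $J'$ gives $J(u)=c_M$ and $J'(u)=0$.

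The main obstacle is the construction underlying the deformation lemma: one has to build a locally Lipschitz pseudo-gradient vector field on the open set of regular points of $J$ (using only the $C^1$ regularity of $J$, so that $J'$ is merely continuous, not Lipschitz) and then integrate a cut-off version of it to obtain $\eta$. The refinement $c_M=b$ is the other subtle point, since without the dualization provided by Ghoussoub--Preiss one would need a more intricate localized deformation argument (à la Schechter or Struwe) forcing the deformed path to realize its maximum on $\partial B_\rho(u_0)$; this is where most variants of the theorem encounter their technical peak.
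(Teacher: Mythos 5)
The paper does not actually prove Theorem~\ref{mountainpass}: it is stated and immediately cited to the monograph of Motreanu--Motreanu--Papageorgiou \cite[Theorem 5.40]{MMP}, so there is no ``paper's proof'' to compare against. That said, your reconstruction is correct and follows the classical route. The intermediate value argument for $c_M\geq b$ is exactly right. The deformation argument for the existence of a critical point at level $c_M$ is also sound; note that, as written, it already covers both cases $c_M>b$ and $c_M=b$, since the only use of $b>\max\{J(u_0),J(u_1)\}$ is to keep the endpoints $u_0,u_1$ outside the critical band $J^{-1}([c_M-2\epsilon,c_M+2\epsilon])$, and this holds because $c_M\geq b$. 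One small omission: the quantitative deformation lemma (Willem-type) requires in addition a smallness condition of the form $\epsilon<\delta^2/8$ linking the level-band width to the gradient lower bound; this costs nothing since you are already free to shrink $\epsilon$, but it should be stated. For the refinement $u\in\partial B_\rho(u_0)$ when $c_M=b$, invoking the Ghoussoub--Preiss dual-set min-max principle with $F=\partial B_\rho(u_0)$ is a clean and standard way to obtain a $({\rm PS})_{c_M}$ sequence localized near $F$, and combined with $({\rm PS})_{c_M}$ and closedness of the sphere it gives the conclusion; this is indeed the mechanism behind most textbook treatments of this ``limit case,'' including the one cited.
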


\begin{thm}[Schauder; cf. {\cite[Theorem 6.3.2 p.119]{GD}}]
\label{schauder}
Let $K$ be a non-empty bounded convex subset of a normed linear space $E$, and let $T:K\to K$ be a compact map. Then $T$ has a fixed point.
\end{thm}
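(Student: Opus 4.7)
The plan is to prove Theorem~\ref{schauder} by the classical Schauder-projection argument: reduce the infinite-dimensional fixed point problem to Brouwer's theorem in finite dimensions, and then recover a genuine fixed point via the relative compactness of $T(K)$.

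First I would use the compactness of $T$ to approximate $T(K)$ by finite sets. Given $\eps>0$, select an $\eps$-net $\{y_1,\dots,y_n\}\subseteq T(K)$ with $T(K)\subseteq\bigcup_{i=1}^n B_\eps(y_i)$, and define the Schauder projection
$$ P_\eps(y):=\frac{\sum_{i=1}^n \mu_i(y)\, y_i}{\sum_{j=1}^n \mu_j(y)}, \qquad \mu_i(y):=\max\{0,\eps-\|y-y_i\|\}. $$
A short verification shows that $P_\eps$ is continuous on $T(K)$, takes values in $K_\eps:=\mathrm{conv}\{y_1,\dots,y_n\}$, and satisfies $\|P_\eps(y)-y\|\leq\eps$ for every $y\in T(K)$; convexity of $K$ together with the inclusion $y_i\in T(K)\subseteq K$ yields $K_\eps\subseteq K$. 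The composition $T_\eps:=P_\eps\circ T$ is then a continuous self-map of the compact convex set $K_\eps$, which lies in the finite-dimensional subspace $\mathrm{span}\{y_1,\dots,y_n\}$, so Brouwer's fixed point theorem produces $x_\eps\in K_\eps\subseteq K$ with $T_\eps(x_\eps)=x_\eps$, and by construction
$$ \|T(x_\eps)-x_\eps\|=\|T(x_\eps)-P_\eps(T(x_\eps))\|\leq\eps. $$

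To conclude, I would pass to the limit as $\eps_k\to 0^+$: the relative compactness of $T(K)$ allows me to extract a sub-sequence along which $T(x_{\eps_k})\to z$ in $E$, which forces $x_{\eps_k}\to z$ as well; continuity of $T$ then yields $z=T(z)$, and since $T(z)\in T(K)\subseteq K$, the point $z$ is the desired fixed point lying in $K$. The main technical obstacle is the careful set-up of the Schauder projection in an infinite-dimensional normed space, namely checking continuity, the approximation bound $\|P_\eps(y)-y\|\leq\eps$, and the inclusion $K_\eps\subseteq K$; once these are in hand, the finite-dimensional reduction via Brouwer and the limiting argument are routine consequences of the compactness of $T(K)$.
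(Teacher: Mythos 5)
The paper states this result by citation (Granas--Dugundji) and does not supply its own proof, so I review your argument on its own terms. Your reduction via the Schauder projection to Brouwer's theorem is the standard strategy, and the verification of the properties of $P_\eps$ --- continuity, the approximation bound $\|P_\eps(y)-y\|\leq\eps$ on $T(K)$, and the inclusion $K_\eps\subseteq K$ --- is correct, as is the extraction of $\eps$-approximate fixed points $x_\eps$ via Brouwer.

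The gap is in the final limiting step. You obtain $T(x_{\eps_k})\to z$ and hence $x_{\eps_k}\to z$, and want to conclude $T(z)=z$ by continuity. But $T$ is only defined on $K$, so you must first know $z\in K$; your justification, namely ``since $T(z)\in T(K)\subseteq K$, the point $z$ lies in $K$,'' is circular, as it invokes $T(z)$ (indeed $z=T(z)$) before establishing $z\in K$. With your reading of compactness --- merely that $T(K)$ is relatively compact in $E$ --- the theorem is actually false when $K$ is not closed: take $E=\R$, $K=(0,1)$, $T(x)=x/2$; then $T$ is continuous, $\overline{T(K)}=[0,1/2]$ is compact in $\R$, yet $T$ has no fixed point in $K$. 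What saves the statement is the Granas--Dugundji convention that a compact map $T:K\to K$ has image $T(K)$ contained in a compact subset $\hat{K}$ of $K$ itself. Under that convention each $T(x_{\eps_k})$ lies in $\hat{K}$, so the limit $z\in\hat{K}\subseteq K$, and your continuity argument then goes through. You should make this definitional point explicit and use it to justify $z\in K$ before invoking continuity, replacing the circular sentence at the end.
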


We will need also the following result about boundedness of sequences defined by recursion.

\begin{lemma}
\label{reclemma}
Let $(b_k)_{k=0}^\infty\subseteq [0,+\infty)$ satisfy, for some $c>0$ and $K,\alpha>1$, the recursion
\begin{equation*}
b_k \leq c + Kb_{k-1}^{\alpha} \quad \mbox{for all}\;\; k\in\N.
\end{equation*}
If
\begin{equation}
\label{smallnessconds}
K b_0^{\alpha-1}\leq\frac{1}{2} \quad \mbox{and} \quad Kc^{\alpha-1}<2^{-\alpha},
\end{equation}
then $(b_k)$ is bounded.
\end{lemma}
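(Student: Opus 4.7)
The plan is to prove by induction on $k$ that $b_k \leq M$ for all $k \geq 0$, with $M$ a suitable constant depending only on $K$ and $\alpha$. The natural choice, dictated by the requirement that the map $t \mapsto c + Kt^\alpha$ send $[0,M]$ into itself with some slack to accommodate the additive term $c$, is
$$ M := (2K)^{-1/(\alpha-1)}, $$
so that $KM^{\alpha-1} = 1/2$ by construction.

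With this definition, the two hypotheses in \eqref{smallnessconds} translate into clean estimates for $b_0$ and $c$. The first, $Kb_0^{\alpha-1} \leq 1/2 = KM^{\alpha-1}$, is equivalent to $b_0 \leq M$. The second, $Kc^{\alpha-1} < 2^{-\alpha}$, after multiplying by $2^{\alpha-1}$, becomes $K(2c)^{\alpha-1} < 1/2 = KM^{\alpha-1}$, which by monotonicity of $t \mapsto t^{\alpha-1}$ (valid since $\alpha > 1$) forces $2c < M$, i.e., $c < M/2$.

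Then the inductive step is immediate: assuming $b_{k-1} \leq M$, the recursion yields
$$ b_k \leq c + K b_{k-1}^{\alpha} \leq c + KM^{\alpha} = c + (KM^{\alpha-1})M = c + \frac{M}{2} < M, $$
which closes the induction and shows that $(b_k)$ is uniformly bounded by $M$.

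I do not anticipate any real obstacle: the only subtlety is guessing the correct value of $M$, and once the threshold $1/2$ is selected (so that the iteration leaves room for the additive constant $c$), everything falls into place. The strict inequality in the second condition of \eqref{smallnessconds} is precisely what supplies the slack $c < M/2$ that is needed to close the induction.
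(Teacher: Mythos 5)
Your proof is correct, and it takes a genuinely different route from the paper's. The paper unrolls the recursion $k$ times, organizes the result as a sum $\sum_{j=0}^{k-1}\bigl(2^{\alpha/(\alpha-1)}K^{1/(\alpha-1)}c\bigr)^{\alpha^j}+\bigl(2^{1/(\alpha-1)}K^{1/(\alpha-1)}b_0\bigr)^{\alpha^k}$, and then reads the two conditions in \eqref{smallnessconds} as saying that the first base is $<1$ (so the series $\sum_j q^{\alpha^j}$ converges) and the second base is $\leq 1$ (so the tail term stays bounded). You instead exhibit an explicit invariant threshold $M:=(2K)^{-1/(\alpha-1)}$, translate the hypotheses into $b_0\leq M$ and $c<M/2$, and close an induction showing the map $t\mapsto c+Kt^\alpha$ sends $[0,M]$ into itself. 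Your argument is shorter and yields a cleaner, explicit uniform bound $b_k\leq M$ for all $k$, whereas the paper's unrolling gives a bound depending on $c$ through the series; both are perfectly adequate for the purpose, which is just boundedness of $(b_k)$.
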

\begin{proof}
By iteration we get
\begin{equation*}
\begin{split}
b_k &\leq c + Kb_{k-1}^\alpha \leq c + K(c+Kb_{k-2}^\alpha)^\alpha \leq c+2^\alpha K(c^\alpha+K^\alpha b_{k-2}^{\alpha^2}) \\
&\leq c+2^\alpha K c^\alpha + 2^\alpha K^{1+\alpha}(c+Kb_{k-3}^\alpha)^{\alpha^2} \leq c+2^\alpha K c^\alpha + 2^{\alpha+\alpha^2}K^{1+\alpha}(c^{\alpha^2}+K^{\alpha^2}b_{k-3}^{\alpha^3}) \\
&\leq \ldots \leq c + \sum_{j=1}^{k-1} 2^{\sum_{i=1}^j\alpha^i} K^{\sum_{i=0}^{j-1}\alpha^i} c^{\alpha^j} + 2^{\sum_{i=1}^{k-1}\alpha^i} K^{\sum_{i=0}^{k-1}\alpha^i} b_0^{\alpha^k} \\
&= \sum_{j=0}^{k-1} 2^{\frac{\alpha^{j+1}-\alpha}{\alpha-1}} K^{\frac{\alpha^j-1}{\alpha-1}} c^{\alpha^j} + 2^{\frac{\alpha^k-\alpha}{\alpha-1}} K^{\frac{\alpha^k-1}{\alpha-1}} b_0^{\alpha^k} \leq \sum_{j=0}^{k-1} (2^{\frac{\alpha}{\alpha-1}} K^{\frac{1}{\alpha-1}} c)^{\alpha^j} + (2^{\frac{1}{\alpha-1}} K^{\frac{1}{\alpha-1}} b_0)^{\alpha^k}.
\end{split}
\end{equation*}
The conclusion then follows by \eqref{smallnessconds} observing that, for all $k\in\N$,
\begin{equation*}
b_k \leq \sum_{j=0}^\infty (2^{\frac{\alpha}{\alpha-1}} K^{\frac{1}{\alpha-1}} c)^{\alpha^j} + 1 <+\infty,
\end{equation*}
since $\sum_{j=0}^\infty q^{\alpha^j}$ is convergent for all $q\in(0,1)$.
\end{proof}

We conclude this section by proving the existence of a sub-solution to \eqref{prob} (see Remark \ref{subcomparisonrmk} below). We premit a straightforward adaptation of the weak comparison principle (cf. \cite[Theorem 3.4.1]{PS}) to the setting of Beppo Levi spaces.
\begin{lemma}
\label{weakcomp}
Let $u,v\in\D^{1,p}_0(\R^N)$ satisfy
\begin{equation}
\label{weakcomptest}
\langle -\Delta_p u,\varphi \rangle \leq \langle -\Delta_p v,\varphi\rangle
\end{equation}
for all $\varphi\in\D^{1,p}_0(\R^N)$ such that $\varphi\geq 0$ in $\R^N$ and $\varphi\equiv 0$ on $\{u\geq v\}$. Then $u\leq v$ in $\R^N$.
\end{lemma}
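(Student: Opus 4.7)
The plan is to follow the classical Pucci--Serrin approach: test the prescribed inequality with the canonical ``error'' function $\varphi := (u-v)_+$, then close the argument via the strict monotonicity of the $p$-Laplacian together with a functional-analytic observation peculiar to the Beppo Levi space. First I would verify that $\varphi$ is admissible: since $u,v\in\D^{1,p}_0(\R^N)$ so does $u-v$, and Stampacchia's lemma (cf.\ \cite[Theorem 4.4]{EG}) yields $(u-v)_+\in\D^{1,p}_0(\R^N)$ with $\nabla(u-v)_+=\chi_{\{u>v\}}\nabla(u-v)$. Clearly $\varphi\geq 0$ in $\R^N$ and is supported on $\{u>v\}$, so it is compatible with the admissibility condition in \eqref{weakcomptest}.

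Plugging this $\varphi$ into \eqref{weakcomptest} and unwinding the duality brackets via the gradient formula, the inequality collapses to
$$\int_{\{u>v\}}\bigl(|\nabla u|^{p-2}\nabla u-|\nabla v|^{p-2}\nabla v\bigr)\cdot(\nabla u-\nabla v)\dx\leq 0.$$
The strict monotonicity of the vector field $\xi\mapsto|\xi|^{p-2}\xi$ now forces the conclusion: the integrand is pointwise nonnegative and vanishes only when $\nabla u=\nabla v$, so the integral must be zero, whence $\nabla u=\nabla v$ a.e.\ on $\{u>v\}$, equivalently $\nabla(u-v)_+=0$ a.e.\ in $\R^N$.

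To close, since $\|\cdot\|_{\D^{1,p}_0(\R^N)}=\|\nabla\cdot\|_p$ is a genuine norm on the Beppo Levi space, the chain $\|(u-v)_+\|_{\D^{1,p}_0(\R^N)}=\|\nabla(u-v)_+\|_p=0$ yields $(u-v)_+\equiv 0$, i.e., $u\leq v$ in $\R^N$. The most delicate point I anticipate is the strict-monotonicity step in the subquadratic regime $1<p<2$, where the sharp bound $(|\xi|^{p-2}\xi-|\eta|^{p-2}\eta)\cdot(\xi-\eta)\geq c_p|\xi-\eta|^p$ (valid for $p\geq 2$) is not available, so one must resort to the degenerate form $(|\xi|^{p-2}\xi-|\eta|^{p-2}\eta)\cdot(\xi-\eta)\geq c_p(|\xi|+|\eta|)^{p-2}|\xi-\eta|^2$, whose equality case still forces $\xi=\eta$; moreover, unlike the bounded-domain version of the theorem, where the final step invokes Poincar\'e's inequality in $W_0^{1,p}(\Omega)$, here it follows directly from the very definition of the Beppo Levi norm, which is exactly the modification required to adapt \cite[Theorem 3.4.1]{PS} to the entire Euclidean setting.
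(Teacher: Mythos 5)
Your proof is correct and matches the paper's argument almost line for line: test with $(u-v)_+$, invoke Stampacchia's gradient formula, reduce to the monotonicity inequality for the $p$-Laplacian vector field, and conclude from the fact that $\|\nabla\cdot\|_p$ is a genuine norm on $\D^{1,p}_0(\R^N)$. The only cosmetic difference is that the paper cites Damascelli's comparison lemma (\cite[Lemma 2.1]{D}) to infer $\nabla(u-v)_+=0$ from the nonnegativity and vanishing of the monotonicity integrand, whereas you spell out the strict monotonicity directly (including the $1<p<2$ subtlety); both routes are standard and equivalent. One small thing to flag: as stated, the admissibility clause reads $\varphi\equiv 0$ on $\{u\geq v\}$, which $(u-v)_+$ does \emph{not} satisfy — it is supported on $\{u>v\}$. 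The intended condition is clearly $\varphi\equiv 0$ on $\{u\leq v\}$ (consistent with how the lemma is applied in Lemma \ref{subsol} and in Remark \ref{subcomparisonrmk}), and both you and the paper silently read it that way; you might want to make that correction explicit rather than asserting compatibility as given.
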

\begin{proof}
Testing \eqref{weakcomptest} with $(u-v)_+\in\D^{1,p}_0(\R^N)$, besides recalling Stampacchia's lemma (cf., e.g., \cite[Theorem 4.4]{EG}), yields
$$ \int_{\{u>v\}} \left(|\nabla u|^{p-2} \nabla u-|\nabla v|^{p-2}\nabla v\right) (\nabla u-\nabla v) \dx \leq 0. $$
Hence, \cite[Lemma 2.1]{D} entails $\nabla (u-v)_+ = 0$ in $\R^N$, which implies $(u-v)_+=0$ in $\R^N$, since $(u-v)_+\in\D^{1,p}_0(\R^N)$.
\end{proof}

\begin{lemma}\label{subsol}
Let $w$ satisfy \ref{hypw} and let $\gamma\in(0,1)$. Then there exists a unique $u\in C^{1,\alpha}_\loc(\R^N)$ solution to
\begin{equation}
\label{subprob}
\left\{ \begin{alignedat}{2}
-\Delta_p u &= w(x) u^{-\gamma} \quad &&\mbox{in} \;\; \R^N, \\
u &> 0 \quad &&\mbox{in} \;\; \R^N, \\
u(x) &\to 0 \quad &&\mbox{as} \;\; |x|\to+\infty.
\end{alignedat}
\right.
\end{equation}
Moreover, $wu^{-\gamma}\in L^1(\R^N)\cap L^\infty(\R^N)$.
\end{lemma}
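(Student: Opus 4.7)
The plan is a standard variational construction on the Beppo Levi space $\D^{1,p}_0(\R^N)$, followed by De Giorgi / DiBenedetto-Tolksdorf regularity, a barrier argument for the pointwise decay, and a monotonicity-based uniqueness via Lemma \ref{weakcomp}. For existence I would minimize
\[J(v):=\frac{1}{p}\int_{\R^N}|\nabla v|^p\dx-\frac{1}{1-\gamma}\int_{\R^N}w\,v_+^{1-\gamma}\dx.\]
Since $w\in L^1(\R^N)\cap L^\infty(\R^N)$ and $0<1-\gamma<p^*$, Hölder's inequality and Sobolev's embedding make $J$ well-defined; since $1-\gamma<p$ it is coercive. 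Weak lower semi-continuity of the subcritical term follows by splitting $\R^N=B_R\cup B_R^e$: on $B_R$ the compact embedding $\D^{1,p}_0(\R^N)\compact L^p(B_R)$ gives a.\,e.\,convergence with a dominating function, and on $B_R^e$ the decay \eqref{weightdecay} provides a uniform tail bound. A non-negative minimizer $u$ thus exists, and evaluating $J$ at $t\phi$, with $\phi\in C^\infty_c(\R^N)$ positive and $t>0$ small, shows $\inf J<0$, so $u\not\equiv 0$.

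Because of the singularity, $J$ cannot be differentiated freely. For $\phi\in\D^{1,p}_0(\R^N)$ with $\phi\ge 0$ and $t>0$, the inequality $J(u+t\phi)\ge J(u)$, concavity of $s\mapsto s^{1-\gamma}$, and Fatou's lemma give
\[\int_{\R^N}|\nabla u|^{p-2}\nabla u\cdot\nabla\phi\dx\ge\int_{\R^N}wu^{-\gamma}\phi\dx,\]
in particular $wu^{-\gamma}\phi\in L^1(\R^N)$. The reverse inequality follows by testing with variations that keep $u$ bounded away from zero on compacta, together with a density argument, producing $-\Delta_p u=wu^{-\gamma}$ in $\D^{-1,p'}(\R^N)$. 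Nontriviality plus the Pucci-Serrin-V\'azquez strong maximum principle give $u>0$ in $\R^N$. Moser / De Giorgi iteration yields $u\in L^\infty_\loc(\R^N)$; since then $u$ is locally bounded from below (by continuity and positivity), the right-hand side $wu^{-\gamma}$ is in $L^\infty_\loc(\R^N)$, and DiBenedetto-Tolksdorf regularity theory gives $u\in C^{1,\alpha}_\loc(\R^N)$.

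For the pointwise decay I use a radial barrier $V(x):=A|x|^{-\alpha}$ with $\alpha<(N-p)/(p-1)$ on $B_R^e$. A direct radial computation gives
\[-\Delta_p V(x)=(A\alpha)^{p-1}\bigl[(N-1)-(\alpha+1)(p-1)\bigr]|x|^{-(p-1)\alpha-p},\]
where the bracket is positive by the choice of $\alpha$. Since the condition $l>N+\gamma(N-p)/(p-1)$ of \eqref{weightdecay} implies $(N-p)/(p-1)<(l-p)/(p-1+\gamma)$, choosing $\alpha$ close enough to $(N-p)/(p-1)$ and $A$ large enough yields $-\Delta_p V\ge c_3 A^{-\gamma}|x|^{-l+\gamma\alpha}\ge wV^{-\gamma}$ on $B_R^e$; enlarging $A$ to also enforce $V\ge u$ on $\partial B_R$ (possible since $u\in L^\infty_\loc$), Lemma \ref{weakcomp} applied on $B_R^e$ (both functions vanish at infinity) gives $u(x)\le A|x|^{-\alpha}$ for $|x|>R$, hence $u(x)\to 0$. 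A complementary lower bound $u(x)\ge c|x|^{-(N-p)/(p-1)}$ at infinity is obtained by exploiting \eqref{weightbelow}: $wu^{-\gamma}$ dominates a positive multiple of $\chi_{B_\varrho(x_0)}$, so comparison with the fundamental-solution-type function solving $-\Delta_p V=\chi_{B_\varrho(x_0)}$ yields the estimate. Combining both decays with \eqref{weightdecay} produces $wu^{-\gamma}\in L^1(\R^N)\cap L^\infty(\R^N)$, and the exponent condition is sharp for the $L^1$ part.

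Uniqueness follows by the weak comparison principle: if $u_1,u_2$ are two solutions, testing the difference of the equations with $(u_1-u_2)_+$ gives a non-negative LHS (by the $(S_+)$-type monotonicity of the $p$-Laplacian) and a non-positive RHS (since $s\mapsto s^{-\gamma}$ is decreasing, so $u_1^{-\gamma}<u_2^{-\gamma}$ on $\{u_1>u_2\}$); both must therefore vanish, forcing $u_1\le u_2$, and by symmetry $u_1=u_2$. This is precisely the setting of Lemma \ref{weakcomp}. The main delicate point is the passage from minimization to the Euler-Lagrange equation under the singularity of $u^{-\gamma}$, which requires both a priori positivity of $u$ and a careful handling of one-sided variations; a second non-routine step is the construction of the barrier $V$, which must be a supersolution of the \emph{full} nonlinear equation (not just of $-\Delta_p V=0$), and the exponent condition in \eqref{weightdecay} is exactly what makes this compatible with $wu^{-\gamma}\in L^1(\R^N)$.
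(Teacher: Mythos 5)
Your argument is correct in its conclusions but takes a genuinely different route from the paper. For existence, you minimize directly the (non-$C^1$, because of the singularity) energy
\[J(v)=\frac{1}{p}\|\nabla v\|_p^p-\frac{1}{1-\gamma}\int_{\R^N}wv_+^{1-\gamma}\dx,\]
and recover the Euler--Lagrange equation a posteriori via one-sided variations, Fatou, the weak Harnack inequality for nonnegative $p$-supersolutions (to get $\inf_K u>0$ on compacta $K$), and a density argument. This is the Lazer--McKenna style machinery; it works, but the passage from one-sided variational inequality to the full equation is the technically heavy step and you compress it into a single sentence. The paper sidesteps this entirely: it regularizes the singularity as $((u_n)_++1/n)^{-\gamma}$, obtains a classical $C^{1,\alpha}$ solution $u_n$ for each $n$ by direct methods, proves the sequence is monotone increasing by weak comparison, derives a uniform $\D^{1,p}_0$-bound by testing with $u_n$, and passes to the limit in the weak formulation using Beppo Levi's theorem on the monotone singular term and an a.e.-gradient-convergence result for the principal part. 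That route never has to differentiate a non-$C^1$ functional, at the cost of having to identify the weak limit with the pointwise monotone limit. For the pointwise decay you also proceed differently: you build a single radial supersolution $V=A|x|^{-\alpha}$ of the full nonlinear inequality $-\Delta_pV\geq wV^{-\gamma}$ on $B_R^e$ (using $\alpha$ slightly below $(N-p)/(p-1)$ so the divergence bracket is positive, and the exponent condition of \eqref{weightdecay} to make the powers compatible), whereas the paper first lower-bounds $u$ by a $p$-harmonic $\Phi_\sigma=\sigma|x|^{(p-N)/(p-1)}$, uses that lower bound to turn the right-hand side into a fixed decaying function $c_3\sigma^{-\gamma}|x|^{\gamma(N-p)/(p-1)-l}$, and then invokes a prepared barrier lemma from \cite{GG2} for the resulting \emph{linear-in-$u$} exterior problem. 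Both barrier constructions rely on the same exponent condition and on the weak comparison principle in exterior domains; both are correct. Your lower decay bound, obtained by comparison with a solution of $-\Delta_pU=c\chi_{B_\varrho(x_0)}$, mirrors the paper's use of $\Phi_\sigma$ for the same purpose (controlling $wu^{-\gamma}$ near infinity to get $L^1$). The uniqueness argument is the same as the paper's (Lemma \ref{weakcomp} with $(u_1-u_2)_+$ and strict monotonicity of $s\mapsto s^{-\gamma}$); the phrase ``$(S_+)$-type monotonicity'' is a slight misnomer here --- what you use is the pointwise monotonicity inequality of \cite{D}, not the sequential $(S_+)$ property --- but the argument itself is fine.
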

\begin{proof}
For all $n\in\N$, consider the regularized problems
\begin{equation}
\label{subregular}
\tag{${\rm\underline{P}}_n$}
-\Delta_p u_n = w(x) \left((u_n)_++\frac{1}{n}\right)^{-\gamma} \quad \mbox{in} \;\; \R^N.
\end{equation}
Fix any $n\in\N$. Direct methods of Calculus of Variations (see \cite[Theorem I.1.2]{S}) ensure that there exists $u_n\in\D^{1,p}_0(\R^N)$ solution to \eqref{subregular}. Notice that
$$0 \leq w \left((u_n)_++\frac{1}{n}\right)^{-\gamma} \leq n^\gamma w \in L^\infty(\R^N), $$
so $u_n\in C^{1,\alpha}_\loc(\R^N)$ according to nonlinear regularity theory \cite[Corollary p.830]{DB}. Moreover, by the strong maximum principle (cfr. \cite[Theorem 1.1.1]{PS}), $u_n>0$ in $\R^N$.

Testing \eqref{subregular} with $u_n$, besides using H\"older's and Sobolev's inequalities, yields
\begin{equation*}
\|\nabla u_n\|_p^p = \int_{\R^N} w\left(u_n+\frac{1}{n}\right)^{-\gamma} u_n \dx \leq \int_{\R^N} wu_n^{1-\gamma} \dx \leq \|w\|_\zeta \|u_n\|_{p^*}^{1-\gamma} \leq S^{-\frac{1-\gamma}{p}}\|w\|_\zeta \|\nabla u_n\|_p^{1-\gamma},
\end{equation*}
being $\zeta>1$ such that $\frac{1}{\zeta}+\frac{1-\gamma}{p^*}=1$. We deduce $\|\nabla u_n\|_p \leq (S^{-\frac{1-\gamma}{p}}\|w\|_\zeta)^{\frac{1}{p-1+\gamma}}$, so that $(u_n)$ is bounded in $\D^{1,p}_0(\R^N)$. By reflexivity, $u_n\rightharpoonup u$ in $\D^{1,p}_0(\R^N)$ for some $u\in\D^{1,p}_0(\R^N)$, up to sub-sequences.

Observing that $u_n+\frac{1}{n}>u_{n+1}+\frac{1}{n+1}$ on $\{u_n>u_{n+1}\}$ we get the weak inequality
$$-\Delta_p u_n = \left(u_n+\frac{1}{n}\right)^{-\gamma} \leq \left(u_{n+1}+\frac{1}{n+1}\right)^{-\gamma} = -\Delta_p u_{n+1} \quad \mbox{on} \;\; \{u_n>u_{n+1}\}. $$
According to Lemma \ref{weakcomp}, it turns out that $u_n\leq u_{n+1}$ in $\R^N$. Thus, we can define a measurable function $\tilde{u}$ such that $u_n \nearrow \tilde{u}$ in $\R^N$.

We show that $u=\tilde u$ in $\mathbb R^N$. For all $k\in\N$, $\D^{1,p}_0(\R^N)\compact L^p(B_k)$, so $u_n \to u$ in $L^p(B_k)$ and $u_n\to u$ in $B_k$, up to sub-sequences. A diagonal argument ensures that $u_n\to u$ in $\R^N$, whence $u=\tilde{u}$ in $\R^N$ (see, e.g., \cite[p.3044]{GG1} for details).

Now we prove that we can pass to the limit in the weak formulation of \eqref{subregular}. Taking any $\varphi\in\D^{1,p}_0(\R^N)$, we have
\begin{equation}
\label{sublimit1}
\lim_{n\to\infty} \int_{\R^N} |\nabla u_n|^{p-2} \nabla u_n \nabla \varphi \dx = \int_{\R^N} |\nabla u|^{p-2} \nabla u \nabla \varphi \dx,
\end{equation}
according to \cite[Proposition 1]{GG1} applied to the sequence $(|\nabla u_n|^{p-2}\nabla u_n)\subseteq L^{p'}(\R^N)$. On the other hand, splitting $\varphi=\varphi_+-\varphi_-$, Beppo Levi's monotone convergence theorem guarantees
\begin{equation*}
\lim_{n\to\infty} \int_{\R^N} w\left(u_n+\frac{1}{n}\right)^{-\gamma}\varphi_+ \dx = \int_{\R^N} wu^{-\gamma}\varphi_+ \dx.
\end{equation*}
In the same way,
\begin{equation*}
\lim_{n\to\infty} \int_{\R^N} w\left(u_n+\frac{1}{n}\right)^{-\gamma}\varphi_- \dx = \int_{\R^N} wu^{-\gamma}\varphi_- \dx,
\end{equation*}
producing
\begin{equation}
\label{sublimit2}
\lim_{n\to\infty} \int_{\R^N} w\left(u_n+\frac{1}{n}\right)^{-\gamma}\varphi \dx = \int_{\R^N} wu^{-\gamma}\varphi \dx.
\end{equation}

Hence \eqref{sublimit1}--\eqref{sublimit2} ensure that $u$ solves \eqref{subprob}, once we prove that $u\to 0$ as $|x|\to+\infty$. To do this, notice that $-\Delta_p u \geq 0$ in $B_R^e$, where $R>0$ stems from \eqref{weightdecay}. Given $\sigma>0$, set $\Phi_\sigma(x):=\sigma|x|^{\frac{p-N}{p-1}}$ and observe that $-\Delta_p \Phi_\sigma = 0$ in $B_R^e$. According to the weak comparison principle in exterior domains (see, e.g., \cite[Proposition A.12]{GG2}), there exists $\sigma>0$ small enough such that $u\geq \Phi_\sigma$ in $B_R^e$. This, together with \ref{hypw}, implies that $u$ solves
\begin{equation*}
-\Delta_p u \leq c_3\sigma^{-\gamma}|x|^{\gamma\,\frac{N-p}{p-1}-l} \quad \mbox{in}\;\; B_R^e.
\end{equation*}
Choosing $M>u(R)$ large enough, \cite[Lemma 3.2]{GG2} ensures the existence of $\Psi\in\D^{1,p}_0(\R^N)$ solving
\begin{equation*}
\left\{ \begin{alignedat}{2}
-\Delta_p \Psi &= c_3\sigma^{-\gamma}|x|^{\gamma\,\frac{N-p}{p-1}-l} \quad &&\mbox{in} \;\; B_R^e, \\
\Psi &= M \quad &&\mbox{on} \;\; \partial B_R^e, \\
\Psi(x) &\to 0 \quad &&\mbox{as} \;\; |x|\to+\infty.
\end{alignedat}
\right.
\end{equation*}
Thus, applying the aforementioned weak comparison principle to $u$ and $\Psi$, we infer $u\leq \Psi$ in $\R^N$; in particular $u(x)\to 0$ as $|x|\to+\infty$.

Recalling that $u\geq u_1$ in $\R^N$ and $\inf_{B_r} u_1>0$ for all $r>0$, we deduce $wu^{-\gamma}\in L^\infty_\loc(\R^N)$, implying that $u\in C^{1,\alpha}_\loc(\R^N)$ by regularity theory (see \cite[Corollary p.830]{DB}). On the other hand, exploiting the fact that $wu^{-\gamma} \leq c_3 \sigma^{-\gamma} |x|^{\gamma\,\frac{N-p}{p-1}-l}$ in $B_R^e$ and $l>N+\gamma\,\frac{N-p}{p-1}$, we infer $wu^{-\gamma}\in L^1(B_R^e)\cap L^\infty(B_R^e)$. Summarizing, $wu^{-\gamma}\in L^1(\R^N) \cap L^\infty(\R^N)$.

Uniqueness of $u$ can be proved by comparison as follows. If $u_1,u_2\in \D^{1,p}_0(\R^N)$ are two solutions of \eqref{subprob} then Lemma \ref{weakcomp} yields
$u_1\leq u_2$ in $\R^N$, since $wu_1^{-\gamma}<wu_2^{-\gamma}$ on $\{u_1>u_2\}$. Reversing the roles of $u_1$ and $u_2$ leads to $u_2\leq u_1$ in $\R^N$, whence $u_1=u_2$ in $\R^N$.
\end{proof}

\section{A truncated, frozen problem}\label{trfr}

Hereafter we will tacitly suppose \ref{hypf}--\ref{hypw}. Let $\underline{u}_\lambda\in C^{1,\alpha}_\loc(\R^N)$ be the solution to
\begin{equation*}
\left\{ \begin{alignedat}{2}
-\Delta_p u &=\lambda c_1 w(x) u^{-\gamma} \quad &&\mbox{in} \;\; \R^N, \\
u &> 0 \quad &&\mbox{in} \;\; \R^N, \\
u(x) &\to 0 \quad &&\mbox{as} \;\; |x|\to+\infty,
\end{alignedat}
\right.
\end{equation*}
whose existence and uniqueness are guaranteed by Lemma \ref{subsol}, replacing $w$ with $\lambda c_1 w$. According to the scaling properties of the $p$-Laplacian and the singularity $u^{-\gamma}$, we have 
\begin{equation}\label{usub}
\underline u_\lambda = \lambda^{\frac{1}{p-1+\gamma}}\underline{u},
\end{equation}
where $\underline{u}$ solves \eqref{subprob} with $c_1w$ in place of $w$. For any fixed $v\in\D^{1,p}_0(\R^N)$, set
\begin{equation}
\label{a}
a(x,s):=w(x)f(\max\{s,\underline{u}_\lambda(x)\},\nabla v(x)) \quad \forall (x,s)\in \R^N\times\R
\end{equation}
and consider the `truncated and frozen' problem
\begin{equation}
\label{varprob}
\tag{${\rm \hat{P}}_\lambda$}
-\Delta_p u = \lambda a(x,u) + u_+^{p^*-1} \quad \mbox{in} \;\; \R^N.
\end{equation}

Problem \eqref{varprob} has variational structure: its energy functional is
\begin{equation}
\label{Jdef}
J:\D^{1,p}_0(\R^N)\to\R, \quad J(u) := \frac{1}{p} \|\nabla u\|_p^p -\lambda\int_{\R^N} A(x,u) \dx - \frac{1}{p^*} \|u_+\|_{p^*}^{p^*} \quad \forall u\in\D^{1,p}_0(\R^N),
\end{equation}
being $A(x,s):=\int_0^s a(x,t) \dt$. Analogously we define $a_n,A_n,J_n$, replacing $v$ with $v_n\in\D^{1,p}_0(\R^N)$.

By \ref{hypf} we deduce the following estimates, valid for all $(x,s)\in\R^N\times\R$:
\begin{equation}
\label{aest}
c_1w(x)\max\{s,\underline u_\lambda(x)\}^{-\gamma}\leq a(x,s)\leq c_2 w(x)(\max\{s,\underline u_\lambda(x)\}^{-\gamma}+|\nabla v(x)|^{r-1}),
\end{equation}
\begin{equation}
\label{Aest}
\begin{split}
A(x,s) &\geq -c_2w(x)\left(\underline{u}_\lambda(x)^{-\gamma}+|\nabla v(x)|^{r-1}\right)|s|, \\
A(x,s) &\leq \frac{c_2}{1-\gamma}w(x) |s|^{1-\gamma} + c_2w(x)\left(\underline{u}_\lambda(x)^{-\gamma}+|\nabla v(x)|^{r-1}\right)|s|.
\end{split}
\end{equation}

In the sequel, we will make use of $\zeta,\theta\in(1,+\infty)$ defined as
$$ \frac{1}{\zeta}+\frac{1-\gamma}{p^*}=1, \quad \frac{1}{\theta}+\frac{r-1}{p}+\frac{1}{p^*}=1. $$

According to \eqref{Aest}, it is readily seen that $J$ is well-defined and of class $C^1$, with
$$\langle J'(u),\varphi \rangle = \int_{\R^N} |\nabla u|^{p-2}\nabla u\nabla \varphi \dx - \lambda\int_{\R^N} a(x,u)\varphi \dx - \int_{\R^N}u_+^{p^*-1}\varphi \dx \quad \forall u,\varphi\in\D^{1,p}_0(\R^N).$$

First, we prove two general results concerning, in particular, Palais-Smale sequences associated with the functionals $J_n$: Lemma \ref{enestlemma} provides an energy estimate, whilst Lemma \ref{ccplemma} detects a `critical' energy level under which compactness is recovered.

\begin{lemma}[Energy estimate]
\label{enestlemma}
Suppose $\lambda\in(0,1]$. Let $c\in\R$, $L>0$, and $(u_n),(v_n)\subseteq\D^{1,p}_0(\R^N)$ such that
\begin{equation}
\label{generalhyps}
\begin{aligned}
&\limsup_{n\to\infty} J_n(u_n)\leq c, \\
&\lim_{n\to\infty} J_n'(u_n) = 0 \quad \mbox{in} \;\; \D^{-1,p'}(\R^N), \\
&\limsup_{n\to\infty} \|\nabla v_n\|_p \leq L.
\end{aligned}
\end{equation}
Then there exists $\hat{C}=\hat{C}(p,N,w,\gamma,r,c_1,c_2)>0$ such that
\begin{equation*}
\limsup_{n\to\infty} \|\nabla u_n\|_p^p \leq 2N\left[\hat C\lambda^{\frac{p}{p-1+\gamma}} \left(L^{p'(r-1)}+1\right)+c \right].
\end{equation*}
\end{lemma}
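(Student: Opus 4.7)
My plan is to pass from $J_n(u_n)$ and $\langle J'_n(u_n),u_n\rangle$ to a bound on $\|\nabla u_n\|_p^p$ via the standard combination, then estimate the reaction contribution with H\"older and absorb via Young's inequality, taking advantage of $\lambda\le 1$ to unify the powers of $\lambda$.

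Concretely, I would first compute
$$J_n(u_n)-\tfrac{1}{p^*}\langle J_n'(u_n),u_n\rangle = \tfrac{1}{N}\|\nabla u_n\|_p^p-\lambda\!\int_{\R^N}\!\bigl[A_n(x,u_n)-\tfrac{1}{p^*}a_n(x,u_n)u_n\bigr]\dx,$$
using $\tfrac{1}{p}-\tfrac{1}{p^*}=\tfrac{1}{N}$ and the identity $\int u_+^{p^*-1}u_n\dx=\|(u_n)_+\|_{p^*}^{p^*}$. The first two hypotheses in \eqref{generalhyps} give $J_n(u_n)\le c+o(1)$ and $|\langle J_n'(u_n),u_n\rangle|\le o(1)\|\nabla u_n\|_p$, so the task reduces to upper-bounding the integral.

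Next, invoke \eqref{Aest} for the upper bound on $A_n(x,u_n)$, and use that \eqref{aest} gives $|a_n(x,u_n)u_n|\le c_2 w(x)(\underline u_\lambda^{-\gamma}+|\nabla v_n|^{r-1})|u_n|$ (since $\max\{u_n,\underline u_\lambda\}^{-\gamma}\le\underline u_\lambda^{-\gamma}$), whether $u_n$ is positive or negative; hence
$$A_n-\tfrac{1}{p^*}a_nu_n\le \tfrac{c_2}{1-\gamma}w|u_n|^{1-\gamma}+C w\bigl(\underline u_\lambda^{-\gamma}+|\nabla v_n|^{r-1}\bigr)|u_n|.$$
Applying H\"older with exponents $(\zeta,p^*/(1-\gamma))$, $((p^*)',p^*)$, and $(\theta,p/(r-1),p^*)$, together with Sobolev's inequality $\|u_n\|_{p^*}\le S^{-1/p}\|\nabla u_n\|_p$, produces
$$\lambda\!\int\!\bigl[A_n-\tfrac{1}{p^*}a_nu_n\bigr]\dx\le C_1\lambda\|\nabla u_n\|_p^{1-\gamma}+C_2\lambda\|w\underline u_\lambda^{-\gamma}\|_{(p^*)'}\|\nabla u_n\|_p+C_3\lambda\|\nabla v_n\|_p^{r-1}\|\nabla u_n\|_p.$$
The scaling $\underline u_\lambda=\lambda^{1/(p-1+\gamma)}\underline u$ and Lemma \ref{subsol} (so that $w\underline u^{-\gamma}\in L^1\cap L^\infty\subseteq L^{(p^*)'}$) turn the second term into $C_2'\lambda^{(p-1)/(p-1+\gamma)}\|\nabla u_n\|_p$.

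Writing $t_n:=\|\nabla u_n\|_p$, $L_n:=\|\nabla v_n\|_p$, I then apply Young's inequality to each mixed term, with a small parameter $\eps>0$ to be absorbed into $\frac{1}{N}t_n^p$:
\begin{itemize}
\item $\lambda t_n^{1-\gamma}\le \eps t_n^p+C_\eps\lambda^{p/(p-1+\gamma)}$,
\item $\lambda^{(p-1)/(p-1+\gamma)}t_n\le \eps t_n^p+C_\eps\lambda^{p'(p-1)/(p-1+\gamma)}=\eps t_n^p+C_\eps\lambda^{p/(p-1+\gamma)}$,
\item $\lambda L_n^{r-1}t_n\le \eps t_n^p+C_\eps\lambda^{p'}L_n^{p'(r-1)}\le \eps t_n^p+C_\eps\lambda^{p/(p-1+\gamma)}L_n^{p'(r-1)}$,
\end{itemize}
the last using $\lambda\le 1$ and $p'=p/(p-1)\ge p/(p-1+\gamma)$ since $\gamma>0$. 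The $o(1)t_n$ term is handled similarly. Choosing $\eps$ small enough to leave $\tfrac{1}{2N}t_n^p$ on the left, I obtain
$$\tfrac{1}{2N}t_n^p\le c+o(1)+\hat C\lambda^{p/(p-1+\gamma)}\bigl(1+L_n^{p'(r-1)}\bigr),$$
and taking $\limsup$ together with $\limsup L_n\le L$ yields the desired bound.

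The only subtle step is ensuring the unified exponent $\lambda^{p/(p-1+\gamma)}$ in front of every term; this is precisely where $\lambda\le 1$ and the inequality $p'>p/(p-1+\gamma)$ (strict whenever $\gamma>0$) are essential. Everything else is a routine Palais-Smale/mountain-pass energy estimate tailored to the singular-plus-convective structure.
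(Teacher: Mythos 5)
Your proof is correct and follows essentially the same route as the paper's: the $J_n(u_n)-\frac{1}{p^*}\langle J_n'(u_n),u_n\rangle$ combination, the \eqref{aest}--\eqref{Aest} bounds together with H\"older, Sobolev, and the scaling $\underline u_\lambda=\lambda^{1/(p-1+\gamma)}\underline u$, followed by Peter--Paul/Young absorption with $\eps=\frac{1}{2N}$. The only cosmetic difference is where the factor $\lambda\le 1$ is invoked to unify the powers of $\lambda$ (you do it after Young's inequality, the paper does it by pre-factoring $\lambda^{-\gamma/(p-1+\gamma)}\ge 1$ in front of the bracket), but the outcome is identical.
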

\begin{proof}
According to \eqref{generalhyps} we have
\begin{equation}
\label{enest:start}
\begin{split}
c+o(1)(1+\|\nabla u_n\|_p) &\geq J_n(u_n)-\frac{1}{p^*} \langle J'_n(u_n),u_n \rangle \\
&=\frac{1}{N}\|\nabla u_n\|_p^p-\lambda\int_{\R^N} A_n(x,u_n) \dx + \frac{\lambda}{p^*} \int_{\R^N} a_n(x,u_n)u_n \dx.
\end{split}
\end{equation}
By means of \eqref{aest}--\eqref{Aest} and \eqref{usub}, as well as H\"older's and Sobolev's inequalities, besides taking into account that $\lambda\in(0,1)$, we get
\begin{equation}
\label{enest:A}
\begin{split}
&\int_{\R^N} A_n(x,u_n)\dx \\
&\leq \frac{c_2}{1-\gamma} \int_{\R^N} w|u_n|^{1-\gamma}\dx + c_2 \int_{\R^N} w(\underline{u}_\lambda^{-\gamma}+|\nabla v_n|^{r-1})|u_n| \dx \\
&\leq \frac{c_2}{1-\gamma} \|w\|_\zeta \|u_n\|_{p^*}^{1-\gamma} + c_2 (\lambda^{-\frac{\gamma}{p-1+\gamma}}\|w\underline{u}^{-\gamma}\|_{(p^*)'} + \|w\|_\theta \|\nabla v_n\|_p^{r-1}) \|u_n\|_{p^*} \\
&\leq \frac{c_2}{1-\gamma}S^{-\frac{1-\gamma}{p}} \|w\|_\zeta \|\nabla u_n\|_p^{1-\gamma} + c_2 \lambda^{-\frac{\gamma}{p-1+\gamma}} S^{-\frac{1}{p}} (\|w\underline{u}^{-\gamma}\|_{(p^*)'} + \|w\|_\theta L^{r-1}+o(1)) \|\nabla u_n\|_p
\end{split}
\end{equation}
and
\begin{equation}
\label{enest:a}
\begin{split}
\int_{\R^N} a_n(x,u_n)u_n \dx &\geq - \int_{\R^N} a_n(x,u_n)|u_n| \dx \\
&\geq -c_2 \int_{\R^N} w(\underline{u}_\lambda^{-\gamma}+|\nabla v_n|^{r-1})|u_n| \dx \\
&\geq - c_2 \lambda^{-\frac{\gamma}{p-1+\gamma}} S^{-\frac{1}{p}} (\|w\underline{u}^{-\gamma}\|_{(p^*)'} + \|w\|_\theta L^{r-1}+o(1)) \|\nabla u_n\|_p.
\end{split}
\end{equation}
Inserting \eqref{enest:A}--\eqref{enest:a} into \eqref{enest:start} produces
\begin{equation*}
\begin{split}
c+o(1)(1+\|\nabla u_n\|_p) \geq &\frac{1}{N}\|\nabla u_n\|_p^p - \lambda\frac{c_2}{1-\gamma}S^{-\frac{1-\gamma}{p}} \|w\|_\zeta \|\nabla u_n\|_p^{1-\gamma} \\
&-\lambda^{\frac{p-1}{p-1+\gamma}} c_2 S^{-\frac{1}{p}} \left(1+\frac{1}{p^*}\right) (\|w\underline{u}^{-\gamma}\|_{(p^*)'} + \|w\|_\theta L^{r-1}+o(1)) \|\nabla u_n\|_p.
\end{split}
\end{equation*}
Re-absorbing the terms $\|\nabla u_n\|_p$ on the right via Peter-Paul's inequality\footnote{It is a well-known refined version of Young's inequality: given $\sigma>0$ and $p_1,\ldots,p_k\in(1,+\infty)$ such that $\sum_{j=1}^k p_j^{-1} = 1$, there exists $C_\sigma>0$ such that
$$ \prod_{j=1}^k b_j \leq \sigma \prod_{j=1}^{k-1} b_j^{p_j} + C_\sigma b_k^{p_k} \quad \mbox{for all} \;\; b_1,\ldots,b_k\geq 0.$$}, we get
\begin{equation}
\label{enest:mid}
\left(\frac{1}{N}-\eps\right)\|\nabla u_n\|_p^p \leq C_\eps\lambda^{\frac{p}{p-1+\gamma}}(L^{p'(r-1)}+1)+c+o(1),
\end{equation}
for a suitable $C_\eps=C_\eps(\eps,p,N,w,\gamma,r,c_1,c_2)$ (since $S$ is a function of only $p,N$ and $\underline{u}$ depends uniquely on $p,w,\gamma,c_1$). Choosing $\eps=\frac{1}{2N}$ and setting $\hat{C}:=C_\eps$, \eqref{enest:mid} becomes
\begin{equation}
\label{enest:end}
\frac{1}{2N}\|\nabla u_n\|_p^p \leq \hat C\lambda^{\frac{p}{p-1+\gamma}} \left(L^{p'(r-1)}+1\right)+c+o(1).
\end{equation}
Passing to $\limsup$ for $n\to\infty$ concludes the proof.
\end{proof}

\begin{lemma}[Concentration compactness]
\label{ccplemma}
Assume the hypotheses of Lemma \ref{enestlemma}. If
\begin{equation}
\label{hatc}
c<\frac{S^{N/p}}{N}-\hat C\lambda^{\frac{p}{p-1+\gamma}} \left(L^{p'(r-1)}+1\right)=:\hat{c},
\end{equation}
then there exists $u\in \D^{1,p}_0(\R^N)$ such that, up to sub-sequences, $u_n\rightharpoonup u$ in $\D^{1,p}_0(\R^N)$ and
\begin{equation}
\label{strongconv}
(u_n)_+\to u_+ \quad \mbox{in} \;\; L^{p^*}(\R^N).
\end{equation}
In particular, $((u_n)_+)$ is uniformly equi-integrable in $L^{p^*}(\R^N)$, i.e.,
\begin{equation}
\label{equiint}
\int_\Omega (u_n)_+^{p^*} \dx \to 0 \quad \mbox{as} \;\; |\Omega|\to 0, \quad \mbox{uniformly in} \;\; n\in\N.
\end{equation}
\end{lemma}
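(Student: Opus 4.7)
I follow the classical concentration-compactness scheme for critical problems: extract a weak limit, decompose the defect measures via Lemmas \ref{lions}--\ref{bennaoum}, kill every atomic and escape-at-infinity mass by combining a localized-test-function argument with the energy condition $c<\hat c$, and finally upgrade weak to strong $L^{p^*}$ convergence via Brezis--Lieb.

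\textbf{Weak limit and defect decomposition.} By Lemma \ref{enestlemma}, $(u_n)$ is bounded in $\D^{1,p}_0(\R^N)$; up to a subsequence $u_n\rightharpoonup u$, and Lemma \ref{posparts} upgrades this to $(u_n)_+\rightharpoonup u_+$. I then apply Lemma \ref{lions} to $((u_n)_+)$, producing measures $\mu,\nu$, an at most countable family of atoms $(x_j)_{j\in\A}$ with masses $\mu_j=\mu(\{x_j\})$, $\nu_j=\nu(\{x_j\})$ (cf.\ Remark \ref{nujmuj}) satisfying $\mu_j\geq S\nu_j^{p/p^*}$, and Lemma \ref{bennaoum} for the mass-at-infinity parameters $\mu_\infty\geq S\nu_\infty^{p/p^*}$.

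\textbf{Reverse inequality at atoms and at infinity.} For each $j\in\A$ take the scale-invariant cutoff $\phi_\eps(x)=\phi((x-x_j)/\eps)$ with $\phi\in C_c^\infty(\R^N;[0,1])$, $\phi\equiv 1$ on $B_1$, $\phi\equiv 0$ off $B_2$. Testing $\langle J_n'(u_n),\phi_\eps u_n\rangle=o(1)$, using the identity $(u_n)_+^{p^*-1}u_n=(u_n)_+^{p^*}$ and the Leibniz rule, and letting $n\to\infty$ with the weak convergence of measures on $\phi_\eps\in C_c(\R^N)$, one obtains
\[ \int\phi_\eps\,d\mu-\int\phi_\eps\,d\nu=\lambda\lim_n\int a_n(x,u_n)\phi_\eps u_n\,dx-\lim_n\int u_n|\nabla u_n|^{p-2}\nabla u_n\cdot\nabla\phi_\eps\,dx. \]
The gradient-cutoff term is controlled by $\|\nabla u_n\|_p^{p-1}\|\nabla\phi_\eps\|_N\|u_n\|_{L^{p^*}(\mathrm{supp}\,\nabla\phi_\eps)}$, which vanishes uniformly in $n$ as $\eps\to 0$ because $\|\nabla\phi_\eps\|_N$ is scale-invariant and the annulus $\mathrm{supp}\,\nabla\phi_\eps\subseteq B_{2\eps}(x_j)\setminus B_\eps(x_j)$ carries no atom of $\nu$; the singular-convective term is bounded, via \eqref{aest} and H\"older, by
\[ c_2\bigl(\|w\underline u_\lambda^{-\gamma}\chi_{B_{2\eps}(x_j)}\|_{(p^*)'}+\|w\chi_{B_{2\eps}(x_j)}\|_\theta L^{r-1}\bigr)\|u_n\|_{p^*}, \]
which vanishes as $\eps\to 0$ by absolute continuity of the norms, uniformly in $n$ since $\limsup_n\|\nabla v_n\|_p\leq L$. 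Sending $\eps\to 0$ gives $\mu_j\leq\nu_j$; combined with $\mu_j\geq S\nu_j^{p/p^*}$, the dichotomy $\nu_j\in\{0\}\cup[S^{N/p},+\infty)$ follows. The same argument with $\psi_R(x)=1-\phi(x/R)$ in place of $\phi_\eps$, exploiting the integrability of $w$ and the decay of $\underline u_\lambda$ at infinity to tame $\int a_n\psi_R u_n\,dx$ as $R\to\infty$, yields $\nu_\infty\in\{0\}\cup[S^{N/p},+\infty)$.

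\textbf{Exclusion via the energy threshold and conclusion.} Suppose by contradiction $\nu_j\geq S^{N/p}$ for some $j$ (the case $\nu_\infty\geq S^{N/p}$ is identical). Lemma \ref{bennaoum} then gives $\limsup_n\|\nabla u_n\|_p^p\geq\mu_j\geq S^{N/p}$. Returning to \eqref{enest:start} and estimating $\lambda\int A_n\,dx$ and $\lambda\int a_n u_n\,dx$ through \eqref{enest:A}--\eqref{enest:a} and Peter-Paul (with the same constant $\hat C$ as in Lemma \ref{enestlemma}), I obtain
\[ c+o(1)\geq\tfrac{1}{N}\|\nabla u_n\|_p^p-\hat C\lambda^{p/(p-1+\gamma)}(L^{p'(r-1)}+1); \]
passing to $\limsup_n$ produces $c\geq\hat c$, contradicting the hypothesis. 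Hence $\nu_j=0$ for all $j$ and $\nu_\infty=0$. By Lemma \ref{bennaoum} this forces $\|(u_n)_+\|_{p^*}^{p^*}\to\|u_+\|_{p^*}^{p^*}$; combined with $(u_n)_+\to u_+$ a.e.\ (Rellich--Kondrachov on exhausting balls and a diagonal argument) and Brezis--Lieb, this yields \eqref{strongconv}, and \eqref{equiint} follows from Vitali. The main obstacle is running the concentration step uniformly in $n$ despite the non-convergent sequence $(v_n)$ hidden inside $a_n$; this is precisely the role of the absolute continuity of $\|w\underline u_\lambda^{-\gamma}\chi_{B_{2\eps}(x_j)}\|_{(p^*)'}$ and $\|w\chi_{B_{2\eps}(x_j)}\|_\theta$, together with the a priori bound $\limsup_n\|\nabla v_n\|_p\leq L$.
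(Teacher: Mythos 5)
Your setup (boundedness via Lemma \ref{enestlemma}, Lemma \ref{posparts}, the decomposition from Lemmas \ref{lions}--\ref{bennaoum}, the localized test-function argument to identify $\mu_j$ and $\nu_j$) matches the paper's scheme, but the contradiction step has a quantitative gap: a factor of $2$ that destroys the conclusion.

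You claim that, from \eqref{enest:start}, estimating $\lambda\int A_n\dx$ and $\lambda\int a_n u_n\dx$ via \eqref{enest:A}--\eqref{enest:a} and Peter--Paul ``with the same constant $\hat C$'' yields
\[
c+o(1)\geq\tfrac{1}{N}\|\nabla u_n\|_p^p-\hat C\lambda^{\frac{p}{p-1+\gamma}}\left(L^{p'(r-1)}+1\right).
\]
But this is not what the computation in Lemma \ref{enestlemma} gives: Peter--Paul on the terms $\|\nabla u_n\|_p^{1-\gamma}$ and $\|\nabla u_n\|_p$ necessarily eats a piece $\eps\|\nabla u_n\|_p^p$ from the left, and the constant $\hat C$ in \eqref{hatc} is by definition the Peter--Paul constant for the choice $\eps=\frac{1}{2N}$; the correct intermediate inequality, as written in \eqref{enest:end}, has coefficient $\frac{1}{2N}$, not $\frac{1}{N}$. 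Feeding $\limsup_n\|\nabla u_n\|_p^p\geq S^{N/p}$ into the correct inequality only produces $c\geq\frac{S^{N/p}}{2N}-\hat C\lambda^{\frac{p}{p-1+\gamma}}(L^{p'(r-1)}+1)$, which is strictly weaker than $c\geq\hat c$ and does not contradict \eqref{hatc}. The paper's proof sidesteps this by postponing Peter--Paul: it first writes $c+o(1)\geq\frac{1}{N}\|\nabla u_n\|_p^p+\lambda[\cdots]$ with no absorption, then uses the defect decomposition from Lemmas \ref{lions}--\ref{bennaoum} to peel off the atomic mass, $\|\nabla u_n\|_p^p\geq S^{N/p}+\|\nabla u\|_p^p+o(1)$, while passing the $\lambda$-terms to their limits in $u$; only the $u$-terms are then Peter--Pauled, and the cost is absorbed into $\frac{1}{2N}\|\nabla u\|_p^p$, which is subsequently dropped because it is nonnegative. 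This leaves the full $\frac{S^{N/p}}{N}$ intact and yields the sharp contradiction $c\geq\hat c$. Your argument genuinely misses this refinement; replacing the inequality $\limsup_n\|\nabla u_n\|_p^p\geq S^{N/p}$ with the stronger peeled-off form and restricting Peter--Paul to the $u$-terms is precisely what is needed.

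Two further minor remarks. First, you test with $u_n\phi_\eps$ instead of $(u_n)_+\psi_\eps$, so the quadratic gradient term you produce is $\int|\nabla u_n|^p\phi_\eps$, whose weak-* limit is a priori a measure $\geq\mu$ (since $\mu$ is the limit of $|\nabla(u_n)_+|^p$); this still yields a usable one-sided relation, but the paper's choice gives the cleaner $\mu_j=\nu_j$. Second, you invoke Brezis--Lieb plus a.e.\ convergence for \eqref{strongconv}; the paper instead observes that weak convergence together with convergence of norms gives strong convergence by uniform convexity. Both upgrades are correct.
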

\begin{proof}
By Lemma \ref{enestlemma} we deduce that $(u_n)$ is bounded in $\D^{1,p}_0(\R^N)$. Hence there exists $u\in\D^{1,p}_0(\R^N)$ such that, up to sub-sequences, $u_n \rightharpoonup u$ in $\D^{1,p}_0(\R^N)$, which implies $(u_n)_+ \rightharpoonup u_+$ in $\D^{1,p}_0(\R^N)$ by Lemma \ref{posparts}. In particular, $\nabla (u_n)_+ \rightharpoonup \nabla u_+$ in $L^p(\R^N)$, ensuring that the sequence of measures $(|\nabla (u_n)_+|^p)$ is bounded. Thus, $|\nabla (u_n)_+|^p \rightharpoonup \mu$ for some bounded measure $\mu$; analogously, $(u_n)_+^{p^*} \rightharpoonup \nu$ for an opportune bounded measure $\nu$.
According to Lemmas \ref{lions}--\ref{bennaoum}, applied to $((u_n)_+)$ in place of $(u_n)$, there exist some at most countable set $\A$, a family of points $(x_j)_{j\in\A} \subseteq \R^N$, and two families of numbers $(\mu_j)_{j\in\A},(\nu_j)_{j\in\A}\subseteq (0,+\infty)$ satisfying
\begin{equation}
\label{PS:measprops}
\begin{split}
\nu=u_+^{p^*}+\sum_{j\in\A} \nu_j \delta_{x_j}, &\quad \mu\geq |\nabla u_+|^p+\sum_{j\in\A} \mu_j \delta_{x_j}, \\
\limsup_{n\to\infty} \int_{\R^N} (u_n)_+^{p^*} \dx = \int_{\R^N} \, {\rm d}\nu + \nu_\infty, &\quad \limsup_{n\to\infty} \int_{\R^N} |\nabla (u_n)_+|^p \dx = \int_{\R^N} \, {\rm d}\mu+ \mu_\infty,
\end{split}
\end{equation}
and
\begin{equation}
\label{PS:measbounds}
S\nu_j^{p/p^*} \leq \mu_j \quad \mbox{for all} \;\; j\in\A, \quad S\nu_\infty^{p/p^*}\leq \mu_\infty,
\end{equation}
being
\begin{equation*}
\nu_\infty := \lim_{R\to+\infty} \limsup_{n\to\infty} \int_{B_R^e} (u_n)_+^{p^*} \dx, \quad \mu_\infty := \lim_{R\to\infty} \limsup_{n\to\infty} \int_{B_R^e} |\nabla (u_n)_+|^p \dx.
\end{equation*}

We claim that $\A=\emptyset$. By contradiction, let $j\in \A$ and $\psi\in C^\infty_c(\R^N)$ be a standard cut-off function fulfilling $\psi\equiv 1$ in $\overline{B}_{1/2}$, $\psi\equiv 0$ in $B_1^e$, and $0\leq \psi \leq 1$ in $\R^N$. For each $\eps\in(0,1)$ set
$$\psi_\eps(x):=\psi\left(\frac{x-x_j}{\eps}\right).$$

By hypothesis we have $\langle J_n'(u_n), (u_n)_+ \psi_\eps \rangle\to 0$, that is,
\begin{equation}
\label{PS:conctest}
\begin{split}
&\int_{\R^N} |\nabla (u_n)_+|^p \psi_\eps \dx + \int_{\R^N} (u_n)_+|\nabla u_n|^{p-2} \nabla u_n \nabla \psi_\eps \dx \\
&=\lambda \int_{\R^N} a_n(x,u_n)(u_n)_+\psi_\eps \dx +  \int_{\R^N} (u_n)_+^{p^*} \psi_\eps \dx + o(1).
\end{split}
\end{equation}

Since $\D^{1,p}_0(\R^N)\compact L^p(B_\eps(x_j))$, we have (up to sub-sequences) $u_n\to u$ both in $L^p(B_\eps(x_j))$ and in $\R^N$. Reasoning as for \eqref{enest:a}, besides recalling that $(|\nabla u_n|)$ is bounded in $L^p(\R^N)$, one has
\begin{equation}
\label{PS:conctest1}
\begin{aligned}
&\lim_{\eps \to 0} \lim_{n\to\infty} \left| \int_{\R^N} a_n(x,u_n)(u_n)_+\psi_\eps\dx \right| \\
&\leq  c_2 \lim_{\eps \to 0} \lim_{n\to\infty} \int_{B_\eps(x_j)} w(\underline{u}_\lambda^{-\gamma}+|\nabla v_n|^{r-1})|u_n| \dx \\
&\leq c_2\lambda^{-\frac{\gamma}{p-1+\gamma}} S^{-\frac{1}{p}} \left(\sup_{n\in\N} \|\nabla u_n\|_p\right) \lim_{\eps\to 0}\left(\|w\underline{u}^{-\gamma}\|_{L^{(p^*)'}(B_\eps(x_j))} + \|w\|_{L^\theta(B_\eps(x_j))}L^{r-1}\right) \\
&= 0.
\end{aligned}
\end{equation}
Since $\nabla \psi_\eps$ is bounded and compactly supported in $B_\eps(x_j)$, we deduce $u_n\nabla \psi_\eps \to u \nabla \psi_\eps$ in $L^p(\R^N)$ as $n\to\infty$. By H\"older's inequality and a change of variable we infer
\begin{equation}
\label{PS:conctest2}
\begin{split}
&\lim_{\eps \to 0} \lim_{n\to\infty} \left|\int_{\R^N} u_n |\nabla u_n|^{p-2} \nabla u_n \nabla \psi_\eps \dx\right| \\
&\leq \lim_{\eps \to 0} \lim_{n\to\infty} \|\nabla u_n\|_p^{p-1} \|u_n\nabla \psi_\eps\|_p \\
&\leq \left(\sup_{n\in\N} \|\nabla u_n\|_p\right)^{p-1} \lim_{\eps \to 0} \|u\nabla \psi_\eps\|_p \\
&\leq \left(\sup_{n\in\N} \|\nabla u_n\|_p\right)^{p-1} \|\nabla \psi\|_N \lim_{\eps \to 0} \|u\|_{L^{p^*}(B_\eps(x_j))} = 0.
\end{split}
\end{equation}
Passing to the limit in \eqref{PS:conctest} via \eqref{PS:conctest1}--\eqref{PS:conctest2} we get
\begin{equation*}
\lim_{\eps \to 0} \lim_{n\to\infty} \int_{\R^N} |\nabla (u_n)_+|^p \psi_\eps \dx = \lim_{\eps \to 0} \lim_{n\to\infty} \int_{\R^N} (u_n)_+^{p^*} \psi_\eps \dx.
\end{equation*}
Recalling Remark \ref{nujmuj}, we obtain
\begin{equation}
\label{PS:proportional}
\mu_j = \nu_j.
\end{equation}
Combining \eqref{PS:proportional} with \eqref{PS:measbounds} entails $S\nu_j^{p/p^*-1}\leq1$, whence
\begin{equation}
\label{PS:lowerbounds}
\mu_j=\nu_j \geq S^{N/p}.
\end{equation}
Now using \eqref{PS:lowerbounds} and arguing as in \eqref{enest:end} we have 
\begin{equation}
\label{PS:concfinal}\begin{aligned}
c+o(1)&\ge J_n(u_n)-\frac{1}{p^*}\langle J_n'(u_n),u_n\rangle
\\&
\ge\frac{1}{N}\|\nabla u_n\|_p^p+\lambda\left[\frac{1}{p^*}\int_{\R^N}a_n(x,u_n)u_n \dx-\int_{\R^N}A_n(x,u_n) \dx\right]\\
&\ge\frac{S^{N/p}}{N}+\frac{1}{N}\|\nabla u\|_p^p+\lambda\left[\frac{1}{p^*}\int_{\R^N}a_n(x,u)u \dx-\int_{\R^N}A_n(x,u) \dx\right]+o(1)\\
&\ge \frac{S^{N/p}}{N}+\frac{1}{2N}\|\nabla u\|_p^p-\hat C\lambda^{\frac{p}{p-1+\gamma}} \left(L^{p'(r-1)}+1\right)+o(1)\\
&\ge \frac{S^{N/p}}{N}-\hat C\lambda^{\frac{p}{p-1+\gamma}} \left(L^{p'(r-1)}+1\right)+o(1),
\end{aligned}\end{equation}
contradicting \eqref{hatc} as $n\to\infty$. This forces $\A=\emptyset$.

A similar argument proves that concentration cannot occur at infinity, that is, $\nu_\infty=\mu_\infty=0$: indeed, using a cut-off function $\psi_R\in C^\infty(\R^N)$ such that $\psi_R \equiv 0$ in $\overline{B}_R$, $\psi_R \equiv 1$ in $B_{2R}^e$, and $0\leq \psi_R\leq 1$ in $\R^N$, one arrives at $\nu_\infty = \mu_\infty$, and then the conclusion follows as in \eqref{PS:concfinal}. According to \eqref{PS:measprops}, besides $\A=\emptyset$ and $\nu_\infty=0$, we obtain
\begin{equation*}
\limsup_{n\to\infty} \int_{\R^N} (u_n)_+^{p^*} \dx = \int_{\R^N} u_+^{p^*} \dx,
\end{equation*}
that is \eqref{strongconv}, by uniform convexity (see \cite[Proposition 3.32]{B}); in turn, \eqref{strongconv} forces \eqref{equiint} by Vitali's convergence theorem (see, e.g., \cite[Corollary 4.5.5]{Bo}).
\end{proof}

Problem \eqref{varprob} exhibits double lack of compactness, due to the setting $\R^N$ and the presence of a reaction term with critical growth. The aim of the next lemma is to recover compactness on the energy levels that lie under the critical level $\hat{c}$ defined in \eqref{hatc}.

\begin{lemma}[Palais-Smale condition]
\label{PS}
Let $\lambda\in(0,1]$, $L>0$, and $v\in\D^{1,p}_0(\R^N)$ be such that $\|\nabla v\|_p\leq L$. Then $J$ satisfies the ${\rm(PS)}_c$ condition for all $c\in\R$ fulfilling \eqref{hatc}.
\end{lemma}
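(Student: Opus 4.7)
The plan is to combine the compactness already provided by Lemma \ref{ccplemma} with the $(S_+)$ property of $-\Delta_p$ on $\D^{1,p}_0(\R^N)$ \cite{MMM}. Let $(u_n)\subseteq\D^{1,p}_0(\R^N)$ be a Palais--Smale sequence for $J$ at level $c$ satisfying \eqref{hatc}, and choose the constant sequence $v_n\equiv v$, so that $\limsup_n\|\nabla v_n\|_p\leq L$. Then \eqref{generalhyps} holds and Lemma \ref{ccplemma} produces, up to a subsequence, some $u\in\D^{1,p}_0(\R^N)$ with $u_n\rightharpoonup u$ in $\D^{1,p}_0(\R^N)$ and $(u_n)_+\to u_+$ in $L^{p^*}(\R^N)$. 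The embedding $\D^{1,p}_0(\R^N)\compact L^p_\loc(\R^N)$ and a diagonal argument (as in the proof of Lemma \ref{subsol}) further give $u_n\to u$ a.e.\,in $\R^N$.

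Next I would test $\langle J'(u_n),u_n-u\rangle\to 0$ (since $J'(u_n)\to 0$ in $\D^{-1,p'}(\R^N)$ and $(u_n-u)$ is bounded in $\D^{1,p}_0(\R^N)$) and aim to show
\begin{equation*}
\lim_{n\to\infty}\int_{\R^N}|\nabla u_n|^{p-2}\nabla u_n\cdot(\nabla u_n-\nabla u)\dx=0.
\end{equation*}
This reduces to proving that the two reaction integrals on the right-hand side of the weak formulation are infinitesimal. For the critical term, $(u_n)_+\to u_+$ in $L^{p^*}(\R^N)$ forces $(u_n)_+^{p^*-1}\to u_+^{p^*-1}$ in $L^{(p^*)'}(\R^N)$, while $u_n\rightharpoonup u$ in $L^{p^*}(\R^N)$ via the continuous embedding, so weak--strong duality yields $\int(u_n)_+^{p^*-1}(u_n-u)\dx\to 0$. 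For the singular--convective term, the truncation gives $\max\{u_n,\underline{u}_\lambda\}^{-\gamma}\leq\underline{u}_\lambda^{-\gamma}$, so by \eqref{aest}
\begin{equation*}
|a(x,u_n)|\leq c_2\, w(x)\bigl(\underline{u}_\lambda(x)^{-\gamma}+|\nabla v(x)|^{r-1}\bigr)=:\phi(x),
\end{equation*}
and a Hölder argument using the exponents $\zeta,\theta$ introduced after \eqref{Aest}, together with $w\underline{u}^{-\gamma}\in L^1(\R^N)\cap L^\infty(\R^N)$ from Lemma \ref{subsol} and $\|\nabla v\|_p\leq L$, yields $\phi\in L^{(p^*)'}(\R^N)$; the sub-criticality $r<p$ is essential for the conjugate exponent of $w$ to be admissible. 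Continuity of $f(\cdot,\xi)$ on $(0,+\infty)$, combined with $\max\{u_n,\underline{u}_\lambda\}\geq\underline{u}_\lambda>0$ and the a.e.\,convergence $u_n\to u$, gives $a(\cdot,u_n)\to a(\cdot,u)$ a.e.; dominated convergence with dominant $\phi$ upgrades this to strong convergence in $L^{(p^*)'}(\R^N)$, whence $\int a(x,u_n)(u_n-u)\dx\to 0$ by another weak--strong duality.

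Once the displayed limit is established, the $(S_+)$ property of $-\Delta_p$ on $\D^{1,p}_0(\R^N)$ \cite{MMM} immediately promotes $u_n\rightharpoonup u$ to $u_n\to u$ in $\D^{1,p}_0(\R^N)$, which is precisely ${\rm (PS)}_c$. The main technical delicacy I anticipate is the $L^{(p^*)'}$-strong convergence of $a(\cdot,u_n)$: without the sub-critical assumption $r<p$ the integrability of $w|\nabla v|^{r-1}$ would be compromised, and without the pointwise lower bound $\underline{u}_\lambda>0$ provided by Lemma \ref{subsol} the a.e.\,convergence of $a(\cdot,u_n)$ could fail where $u_n$ crosses zero. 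Everything else reduces to standard weak--strong duality arguments and monotonicity of the $p$-Laplacian in Beppo Levi spaces.
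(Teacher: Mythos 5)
Your proof is correct and follows essentially the same route as the paper: invoke Lemma \ref{ccplemma} to obtain weak $\D^{1,p}_0$-convergence and strong $L^{p^*}$-convergence of the positive parts, show that the two reaction integrals in $\langle J'(u_n),u_n-u\rangle$ vanish, and then apply the $(S_+)$ property of $-\Delta_p$ from \cite{MMM}. The only (cosmetic) difference is in the singular--convective integral: the paper simply bounds $|a(x,u_n)|\leq c_2 w(\underline{u}_\lambda^{-\gamma}+|\nabla v|^{r-1})\in L^{(p^*)'}(\R^N)$ and tests the weak convergence $|u_n-u|\rightharpoonup 0$ in $L^{p^*}(\R^N)$ against this fixed dominant, whereas you additionally establish strong $L^{(p^*)'}$-convergence of $a(\cdot,u_n)$ itself via a.e.\,convergence and dominated convergence before doing weak--strong pairing; both work, the paper's being marginally shorter.
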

\begin{proof}
Take any $c\in\R$ as in \eqref{hatc} and consider an arbitrary ${\rm (PS)}_c$ sequence $(u_n)$ associated to the functional $J$. Applying Lemma \ref{ccplemma} with $v_n\equiv v$ and $J_n\equiv J$ entails $u_n\rightharpoonup u$ in $\D^{1,p}_0(\R^N)$ and $(u_n)_+\to u_+$ in $L^{p^*}(\R^N)$. Let us evaluate
\begin{equation}
\label{PS:S+start}
\begin{split}
\langle J'(u_n),u_n-u \rangle &= \int_{\R^N} |\nabla u_n|^{p-2}\nabla u_n\nabla (u_n-u)\dx -\lambda \int_{\R^N} a(x,u_n)(u_n-u) \dx \\
&\quad- \int_{\R^N} (u_n)_+^{p^*-1}(u_n-u) \dx.
\end{split}
\end{equation}
Since $\D^{1,p}_0(\R^N)\hookrightarrow L^{p^*}(\R^N)$, we have $u_n \rightharpoonup u$ in $L^{p^*}(\R^N)$ and $u_n\to u$ in $\R^N$, up to sub-sequences (see, e.g., \cite[p.3044]{GG1}). Thus, observing that \eqref{strongconv} forces $(u_n)_+^{p^*-1}\to u_+^{p^*-1}$ in $L^{(p^*)'}(\R^N)$, we get
\begin{equation}
\label{PS:S+crit}
\lim_{n\to\infty} \int_{\R^N} (u_n)_+^{p^*-1}(u_n-u) \dx = 0. 
\end{equation}
Reasoning as for \eqref{enest:a} one has
$$ \|w(\underline{u}_\lambda^{-\gamma}+|\nabla v|^{r-1})\|_{(p^*)'} \leq \lambda^{-\frac{\gamma}{p-1+\gamma}}\|w\underline{u}^{-\gamma}\|_{(p^*)'}+\|w\|_\theta L^{r-1} < +\infty, $$
so the linear functional $\psi\mapsto \int_{\R^N} w(\underline{u}_\lambda^{-\gamma}+|\nabla v|^{r-1}) \psi \dx$ is continuous in $L^{p^*}(\R^N)$. Moreover, \cite[Proposition 1]{GG1} guarantees that $|u_n-u|\rightharpoonup 0$ in $L^{p^*}(\R^N)$. Accordingly, \ref{hypf} ensures
\begin{equation}
\label{PS:S+sing}
\lim_{n\to\infty} \left|\int_{\R^N} a(x,u_n)(u_n-u) \dx \right| \leq c_2 \lim_{n\to\infty} \int w(\underline{u}_\lambda^{-\gamma}+|\nabla v|^{r-1})|u_n-u| \dx = 0.
\end{equation}
Using \eqref{PS:S+crit}--\eqref{PS:S+sing} and recalling that $\langle J'(u_n),u_n-u \rangle \to 0$, due to the fact that $(u_n)$ is a ${\rm (PS)}_c$ sequence, by \eqref{PS:S+start} we conclude
\begin{equation*}
\lim_{n\to\infty} \langle -\Delta_p u_n, u_n-u \rangle = \lim_{n\to\infty} \int_{\R^N} |\nabla u_n|^{p-2}\nabla u_n\nabla (u_n-u)\dx = 0.
\end{equation*}
Then the ${\rm (S_+)}$ property of $(-\Delta_p,\D^{1,p}_0(\R^N))$ (see \cite[Proposition 2.2]{MMM}) yields $u_n \to u$ in $\D^{1,p}_0(\R^N)$.
\end{proof}

The next two lemmas are devoted to verify the mountain pass geometry for the functional $J$ and ensure the mountain pass level $c_M$ (see Theorem \ref{mountainpass}) lies below the critical Palais-Smale level $\hat{c}$ (see \eqref{hatc}), provided $\lambda$ is small enough.

\begin{lemma}[Mountain pass geometry]
\label{mountainpassgeometry}
Let $L>0$ and $v\in\D^{1,p}_0(\R^N)$ be such that $\|\nabla v\|_p\leq L$. Then there exists $\Lambda_1\in(0,1)$ such that, for every $\lambda\in(0,\Lambda_1)$, the functional $J$ satisfies the mountain pass geometry. More precisely, there exists $\tilde{C}=\tilde{C}(p,N,w,\gamma,r,c_1,c_2)>1$ such that, for all $\lambda$ satisfying 
\begin{equation}
\label{smallness}
0 < \lambda < \Lambda_1:=\left[\frac{S^{N/p}}{\tilde{C}N(S^{N/p^2}+1)(1+L^{r-1})}\right]^{\frac{p-1+\gamma}{p-1}},
\end{equation}
one has
$$ J(\hat{u})<J(0)=0<\inf_{\partial B_\rho} J \quad \mbox{and} \quad \|\nabla \hat{u}\|_p > \rho, \quad \mbox{being} \;\; \rho:=S^{N/p^2},$$
for any $\hat{u}\in\D^{1,p}_0(\R^N)$ such that $\hat{u}_+ \not\equiv 0$ and $\|\nabla \hat{u}\|_p$ is sufficiently large.
\end{lemma}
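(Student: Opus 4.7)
The plan is to adapt the classical Brezis-Nirenberg mountain pass geometry for critical problems, carefully tracking the dependence of all constants on $\lambda$, which enters both through the coefficient in front of $a$ and through the scaling $\underline{u}_\lambda^{-\gamma}=\lambda^{-\gamma/(p-1+\gamma)}\underline{u}^{-\gamma}$ of the singular sub-solution. I would first derive a lower bound for $J$ using the upper bound in \eqref{Aest}, H\"older's inequality with the exponents $\zeta,\theta$ introduced above, Sobolev's inequality, and the constraint $\|\nabla v\|_p\leq L$. These are precisely the ingredients already exploited in the proof of Lemma \ref{enestlemma}, so the computation produces
\begin{equation*}
J(u)\;\geq\;\frac{1}{p}\|\nabla u\|_p^p\;-\;\lambda C_1\|\nabla u\|_p^{1-\gamma}\;-\;\lambda^{\frac{p-1}{p-1+\gamma}}C_2(1+L^{r-1})\|\nabla u\|_p\;-\;\frac{S^{-p^*/p}}{p^*}\|\nabla u\|_p^{p^*},
\end{equation*}
with $C_1,C_2$ depending only on $p,N,w,\gamma,r,c_1,c_2$. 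Enforcing $\Lambda_1<1$, the elementary inequality $\lambda\leq\lambda^{(p-1)/(p-1+\gamma)}$ for $\lambda\in(0,1]$ lets me absorb the singular term into the factor with the smaller exponent.

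Next, on the sphere $\|\nabla u\|_p=\rho:=S^{N/p^2}$ the two pure-power terms give the familiar Brezis-Nirenberg gain: since $\rho^{p^*}S^{-p^*/p}=S^{N/p}$, one has
\begin{equation*}
\frac{1}{p}\rho^p-\frac{S^{-p^*/p}}{p^*}\rho^{p^*}=\Bigl(\frac{1}{p}-\frac{1}{p^*}\Bigr)S^{N/p}=\frac{S^{N/p}}{N}.
\end{equation*}
Bounding crudely $\rho^{1-\gamma}\leq\rho+1\leq S^{N/p^2}+1$ (and similarly for $\rho$ itself), this yields
\begin{equation*}
\inf_{\partial B_\rho}J\;\geq\;\frac{S^{N/p}}{N}\;-\;\tilde C\,\lambda^{\frac{p-1}{p-1+\gamma}}(1+L^{r-1})\bigl(S^{N/p^2}+1\bigr),
\end{equation*}
for a suitable $\tilde C=\tilde C(p,N,w,\gamma,r,c_1,c_2)>1$. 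Raising the bound to the reciprocal exponent $(p-1+\gamma)/(p-1)$, one checks directly that the threshold \eqref{smallness} forces the right-hand side to be strictly positive (indeed $\geq S^{N/p}/(2N)$), giving $0<\inf_{\partial B_\rho}J$.

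For the remaining condition I would fix any $\phi\in C^\infty_c(\R^N)$ with $\phi_+\not\equiv 0$ and use the same estimate \eqref{Aest} in the reverse direction (bounding $-\lambda A(x,t\phi)$ from above by the $\underline{u}_\lambda^{-\gamma}$- and $|\nabla v|^{r-1}$-terms) to get
\begin{equation*}
J(t\phi)\;\leq\;\frac{t^p}{p}\|\nabla\phi\|_p^p\;+\;\lambda C_1 t^{1-\gamma}\|\phi\|_{p^*}^{1-\gamma}\;+\;\lambda^{\frac{p-1}{p-1+\gamma}}C_2(1+L^{r-1})\,t\,\|\phi\|_{p^*}\;-\;\frac{t^{p^*}}{p^*}\|\phi_+\|_{p^*}^{p^*}.
\end{equation*}
Since $p^*>p>1>1-\gamma$, the critical term dominates as $t\to+\infty$, whence $J(t\phi)\to-\infty$; therefore any $\hat u=t\phi$ with $t$ large enough fulfils $J(\hat u)<0$ and $\|\nabla\hat u\|_p>\rho$ simultaneously, which concludes the mountain pass geometry.

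The only genuinely delicate point is the bookkeeping of constants required to match the explicit form of the threshold \eqref{smallness}: the extra factor $S^{N/p^2}+1$ comes from the crude estimate $\rho^{1-\gamma}\leq\rho+1$, while the exponent $(p-1+\gamma)/(p-1)$ is the reciprocal of $(p-1)/(p-1+\gamma)$ and encodes precisely the scaling $\underline{u}_\lambda=\lambda^{1/(p-1+\gamma)}\underline{u}$. Conceptually the argument is the standard critical-problem mountain pass geometry, tailored to the singular and convective terms through the pointwise inequalities already recorded in \eqref{Aest}.
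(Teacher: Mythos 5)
Your proposal follows essentially the same route as the paper: the same lower bound for $J$ via \eqref{Aest}, H\"older, Sobolev, absorbing the $\lambda$-factor into $\lambda^{(p-1)/(p-1+\gamma)}$, the same Brezis--Nirenberg identity at $\rho=S^{N/p^2}$, the same crude bound $\rho^{1-\gamma}\leq\rho+1$, and the same scaling argument $J(tu)\to-\infty$ for the second condition. The only blemishes are cosmetic: the parenthetical ``indeed $\geq S^{N/p}/(2N)$'' does not follow from \eqref{smallness} alone (only strict positivity does, which is all that is needed), and restricting to $\phi\in C^\infty_c(\R^N)$ in the last step is unnecessary since the argument works verbatim for any $u\in\D^{1,p}_0(\R^N)$ with $u_+\not\equiv 0$.
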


\begin{proof}
Fix $\tilde{C}=\tilde{C}(p,N,w,\gamma,r,c_1,c_2)>1$  such that
$$ \frac{c_2}{1-\gamma}S^{-\frac{1-\gamma}{p}}\|w\|_\zeta + c_2 S^{-\frac{1}{p}} \left(\|w\underline{u}^{-\gamma}\|_{(p^*)'}+\|w\|_\theta L^{r-1}\right) \leq \tilde{C}(1+L^{r-1}). $$
Let $\lambda$ fulfill \eqref{smallness} and pick any $t>0$. Reasoning as in \eqref{enest:A}, besides recalling the choice of $\tilde{C}$, we get
\begin{equation*}
\begin{aligned}
&\inf_{\partial B_t} J \\
&\geq \inf_{u\in \partial B_t} \left[ \frac{1}{p}\|\nabla u\|_p^p - \lambda\frac{c_2}{1-\gamma}\int_{\R^N} w |u|^{1-\gamma} \dx -\lambda c_2 \int_{\R^N} w\left(\underline{u}_\lambda^{-\gamma}+|\nabla v|^{r-1}\right)|u| \dx - \frac{1}{p^*} \|u\|_{p^*}^{p^*} \right] \\
&\geq \inf_{u\in \partial B_t} \left[ \frac{1}{p}\|\nabla u\|_p^p - \lambda\frac{c_2}{1-\gamma}S^{-\frac{1-\gamma}{p}}\|w\|_\zeta \|\nabla u\|_p^{1-\gamma} \right. \\
&\left. \quad - c_2 \lambda^{\frac{p-1}{p-1+\gamma}} S^{-\frac{1}{p}} \left(\|w\underline{u}^{-\gamma}\|_{(p^*)'}+\|w\|_\theta L^{r-1}\right)\|\nabla u\|_p - \frac{1}{p^*}S^{-\frac{p^*}{p}} \|\nabla u\|_p^{p^*} \right] \\
&= \frac{1}{p}t^p - \lambda\frac{c_2}{1-\gamma}S^{-\frac{1-\gamma}{p}}\|w\|_\zeta t^{1-\gamma} - c_2 \lambda^{\frac{p-1}{p-1+\gamma}} S^{-\frac{1}{p}} \left(\|w\underline{u}^{-\gamma}\|_{(p^*)'}+\|w\|_\theta L^{r-1}\right)t - \frac{1}{p^*}S^{-\frac{p^*}{p}} t^{p^*} \\
&\geq \frac{1}{p}t^p - \lambda\frac{c_2}{1-\gamma}S^{-\frac{1-\gamma}{p}}\|w\|_\zeta (t+1) -  c_2 \lambda^{\frac{p-1}{p-1+\gamma}} S^{-\frac{1}{p}} \left(\|w\underline{u}^{-\gamma}\|_{(p^*)'}+\|w\|_\theta L^{r-1}\right)(t+1) - \frac{1}{p^*}S^{-\frac{p^*}{p}} t^{p^*} \\
&\geq \frac{1}{p}t^p - \lambda^{\frac{p-1}{p-1+\gamma}}(t+1)\left[\frac{c_2}{1-\gamma}S^{-\frac{1-\gamma}{p}}\|w\|_\zeta  +  c_2 S^{-\frac{1}{p}} \left(\|w\underline{u}^{-\gamma}\|_{(p^*)'}+\|w\|_\theta L^{r-1}\right)\right] - \frac{1}{p^*}S^{-\frac{p^*}{p}} t^{p^*} \\
&\geq \frac{1}{p}t^p - \tilde{C}\lambda^{\frac{p-1}{p-1+\gamma}}(1+L^{r-1})(t+1) - \frac{1}{p^*}S^{-\frac{p^*}{p}} t^{p^*},
\end{aligned}
\end{equation*}
since $\|\nabla u\|_p=t$ whenever $u\in\partial B_t$. Let us consider the real-valued function $g:(0,+\infty)\to\R$ defined as
\begin{equation*}
g(t) :=\frac{1}{p} t^p -\eps (t+1) -\frac{1}{p^*}S^{-\frac{p^*}{p}} t^{p^*},
\end{equation*}
where $\eps:=\tilde{C}\lambda^{\frac{p-1}{p-1+\gamma}}(1+L^{r-1})$. Condition \eqref{smallness} forces $\eps<\frac{S^{N/p}}{N(S^{N/p^2}+1)}$, so that
$$g(S^{N/p^2})=\frac{S^{N/p}}{N}-\eps (S^{N/p^2}+1)>0.$$
Setting $\rho:=S^{N/p^2}$ we get
$$\inf_{\partial B_\rho} J \ge  g(\rho)>0.$$

Given any $u\in\D^{1,p}_0(\R^N)$ such that $u_+\not\equiv 0$, we have $J(tu)\to-\infty$ as $t\to+\infty$: indeed, according to \eqref{Aest} and the fact that $\|u_+\|_{p^*}>0$,
$$\limsup_{t\to+\infty} J(tu)\leq \lim_{t\to+\infty} \left[ \frac{t^p}{p}\|\nabla u\|_p^p +c_2\lambda t\int_{\R^N} w(\underline{u}_\lambda^{-\gamma}+|\nabla v|^{r-1})|u| \dx -\frac{t^{p^*}}{p^*}\|u_+\|_{p^*}^{p^*} \right] = -\infty.$$
Hence, setting $\hat{u}:=tu$, we have $J(\hat{u})<0<\inf_{\partial B_\rho} J$ and $\|\nabla \hat{u}\|_p>\rho$, provided $t$ is sufficiently large.
\end{proof}

\begin{lemma}\label{talentihatc}
Assume the hypotheses of Lemma \ref{mountainpassgeometry}. Then there exists $\Lambda_2\in(0,1)$ such that for all $\lambda\in(0,\Lambda_2)$ one has
$$\inf_{\phi\in\Phi} \sup_{t\in[0,1]} J(\phi(t))<c<\hat{c}, \quad \Phi:=\{\phi\in C^0([0,1];X): \, \phi(0)=0, \, \phi(1)=\hat{u}\}$$
for an opportune $c=c(\lambda,p,N,w,\gamma,c_1)>0$, where $\hat{c}$ is defined in \eqref{hatc} and $\hat{u}$ is the Talenti function
$$\hat{u}(x)= \left[\frac{N^{\frac{1}{p}}\left(\frac{N-p}{p-1}\right)}{1+|x|^{p'}}\right]^{\frac{N-p}{p}}.$$
\end{lemma}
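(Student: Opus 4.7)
The strategy is the classical Brezis--Nirenberg one: use the linear path $\phi(t):=t\hat u$, $t\in[0,1]$, which clearly belongs to $\Phi$, and control $h(s):=J(s\hat u)$ on $[0,1]$. Since $\hat u$ is the Talenti extremal, $\|\nabla\hat u\|_p^p=\|\hat u\|_{p^*}^{p^*}=S^{N/p}$, and so
$$h(s)=S^{N/p}\left(\frac{s^p}{p}-\frac{s^{p^*}}{p^*}\right)-\lambda G(s),\qquad G(s):=\int_{\R^N}A(x,s\hat u)\dx.$$
The ``free'' part $f(s):=S^{N/p}(s^p/p-s^{p^*}/p^*)$ is increasing on $[0,1]$ and attains its global maximum $S^{N/p}/N$ at $s=1$, while $G$ is nonnegative and nondecreasing in $s$.

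Next, I would establish a $\lambda$-uniform positive lower bound for $G$ on $[1/2,1]$. Since $\underline u_\lambda=\lambda^{1/(p-1+\gamma)}\underline u\to 0$ pointwise while $\inf_{B_\varrho(x_0)}\hat u>0$, for $\lambda$ small enough one has $\underline u_\lambda\le\tfrac14\hat u$ on $B_\varrho(x_0)$. Then, for $s\in[1/2,1]$ and $x\in B_\varrho(x_0)$, splitting the $\tau$-integral at $\underline u_\lambda(x)$ gives
$$A(x,s\hat u)\ge c_1w(x)\int_0^{s\hat u(x)}\max\{\tau,\underline u_\lambda(x)\}^{-\gamma}\dtau\ge\frac{c_1w(x)}{1-\gamma}\left[(s\hat u(x))^{1-\gamma}-\gamma\underline u_\lambda(x)^{1-\gamma}\right].$$
Integrating over $B_\varrho(x_0)$ and using $w\ge\omega$ there, along with $\underline u_\lambda\le\tfrac14\hat u$, yields $G(s)\ge C_0$ for $s\in[1/2,1]$, with $C_0=C_0(p,N,\gamma,c_1,\omega,\varrho,x_0)>0$ independent of $\lambda$.

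Third, I would locate the maximum of $h$ inside $(1/2,1)$. One has $h'(0)=-\lambda G'(0)<0$, $h'(1)=-\lambda G'(1)<0$, and
$$h'(1/2)=S^{N/p}\left(2^{1-p}-2^{1-p^*}\right)-\lambda G'(1/2).$$
The bracket is strictly positive since $p<p^*$. Applying \eqref{aest}, and using $\max\{s\hat u,\underline u_\lambda\}^{-\gamma}\le(s\hat u)^{-\gamma}\le 2^\gamma\hat u^{-\gamma}$ for $s\ge 1/2$ together with H\"older's inequality, $G'(s)=\int a(x,s\hat u)\hat u\dx$ is bounded on $[1/2,1]$ by a constant independent of $\lambda$ (the bound involves $\|w\hat u^{1-\gamma}\|_1$ and $\|w\|_\theta\|\nabla v\|_p^{r-1}\|\hat u\|_{p^*}$). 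Hence $h'(1/2)>0$ for $\lambda$ small, and $h$ attains its maximum on $[0,1]$ at some $s^*\in(1/2,1)$. Combining with $f(s^*)\le S^{N/p}/N$ and $G(s^*)\ge C_0$:
$$\sup_{t\in[0,1]}J(\phi(t))=h(s^*)\le\frac{S^{N/p}}{N}-\lambda C_0.$$

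Finally, comparing with $\hat c=S^{N/p}/N-\hat C\lambda^{p/(p-1+\gamma)}(L^{p'(r-1)}+1)$, the desired strict bound reduces to $\lambda C_0>\hat C\lambda^{p/(p-1+\gamma)}(L^{p'(r-1)}+1)$, i.e., $\lambda^{(1-\gamma)/(p-1+\gamma)}<C_0/[\hat C(L^{p'(r-1)}+1)]$. Since the exponent $(1-\gamma)/(p-1+\gamma)$ is strictly positive (as $\gamma<1$), this holds whenever $\lambda\in(0,\Lambda_2)$ for a sufficiently small $\Lambda_2>0$; picking any $c\in(h(s^*),\hat c)$ concludes. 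The main obstacle is bookkeeping: all the thresholds on $\lambda$ (namely smallness of $\underline u_\lambda$ versus $\hat u$ on $B_\varrho(x_0)$, positivity of $h'(1/2)$, and the final comparison with $\lambda^{p/(p-1+\gamma)}$) must be collected into a single $\Lambda_2$, which is routine since $p/(p-1+\gamma)>1$.
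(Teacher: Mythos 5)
Your strategy — linear path through the Talenti function, $\lambda$-uniform lower bound on $G(s)$ for $s\ge 1/2$ from \eqref{weightbelow}, and the final comparison of $\lambda C_0$ against $\hat C\lambda^{p/(p-1+\gamma)}$ — matches the paper's proof closely (the paper's case $\Upsilon_1$). However, the step where you claim that ``$h$ attains its maximum on $[0,1]$ at some $s^*\in(1/2,1)$'' is not justified. The sign pattern $h'(0)<0$, $h'(1/2)>0$, $h'(1)<0$ only guarantees a local minimum in $(0,1/2)$ and a local maximum in $(1/2,1)$; since $G'(s)=\int a(x,s\hat u)\hat u\,dx$ has no useful monotonicity (indeed $a(x,s\hat u)$ is merely continuous in $s$, frozen at the value $f(\underline u_\lambda,\nabla v)$ for small $s$ and unconstrained in shape for larger $s$), $h'$ may change sign more than once on $[0,1/2]$, producing a competing local maximum there. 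Your two bounds — $\max_{[0,1/2]}h\le f(1/2)$ and $h(s^*)>h(1/2)=f(1/2)-\lambda G(1/2)$ — differ by $\lambda G(1/2)$ and so do not order the two candidate maxima.

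The gap is easy to close, and the paper does it via an explicit dichotomy on the location of the maximizer $\overline t$. If the supremum is attained at some $\hat s\in[0,1/2]$, then, since $G\ge 0$ and $f$ is increasing on $[0,1]$,
\begin{equation*}
\sup_{[0,1/2]}h\;\le\;\max_{t\in[0,1/2]}\left(\frac{t^p}{p}-\frac{t^{p^*}}{p^*}\right)S^{N/p}\;=\;\check c\,S^{N/p}\;<\;\frac{S^{N/p}}{N},
\end{equation*}
with a $\lambda$-\emph{independent} gap $(\tfrac1N-\check c)S^{N/p}>0$, so for $\lambda$ small this too is $<\hat c$. You should add this case (the paper's $\Upsilon_2$) and take $c$ to dominate both bounds — the paper's choice is $c=S^{N/p}/N-\lambda\check C$. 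With that addition your proof is correct and essentially identical to the paper's; as a side note, the paper reads off $\overline t\le 1$ from the first-order condition \eqref{lambdacrit} rather than directly restricting the path to $[0,1]$, but both are equivalent since the path itself only covers $t\in[0,1]$.
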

\begin{proof}
Firstly, we notice that $\|\nabla \hat{u}\|_p^p=\|\hat{u}\|_{p^*}^{p^*}=S^{N/p}$ (see \cite{PW} for details). Next, we observe that the path $\phi(u):=t\hat{u}$, $t\in[0,1]$, belongs to $\Phi$, so
\begin{equation*}
\inf_{\phi\in\Phi} \sup_{t\in[0,1]} J(\phi(t)) \leq \sup_{t\in[0,1]} J(t\hat{u}) \leq \sup_{t\in[0,+\infty)} J(t\hat{u}).
\end{equation*}
Accordingly, let us compute the maximizer $\overline{t}$ of the function $t\mapsto J(t\hat{u})$, being $t\geq 0$.
\begin{equation*}
0 = \frac{{\rm d}}{{\rm d}t} (J(t\hat{u}))_{\mid_{t=\overline{t}}} = \langle J'(\overline{t}\hat{u}),\hat{u} \rangle = \overline{t}^{p-1}\|\nabla \hat{u}\|_p^p - \lambda\int_{\R^N} a(x,\overline{t}\hat{u})\hat{u} \dx -  \overline{t}^{p^*-1} \|\hat{u}\|_{p^*}^{p^*},
\end{equation*}
whence
\begin{equation}
\label{lambdacrit}
\overline{t}^{p^*-1} \|\hat{u}\|_{p^*}^{p^*} = \overline{t}^{p-1}\|\nabla \hat{u}\|_p^p -\lambda \int_{\R^N} a(x,\overline{t}\hat{u})\hat{u} \dx.
\end{equation}
From \eqref{lambdacrit} we deduce 
$$\overline{t}^{p^*-1} \|\hat{u}\|_{p^*}^{p^*} \le \overline{t}^{p-1}\|\nabla \hat{u}\|_p^p=\overline{t}^{p-1} \|\hat{u}\|_{p^*}^{p^*},$$
forcing $\overline{t}\in[0,1]$.

Fix any $\Lambda_2\in(0,1)$ and define 
$$\Upsilon_1:=\left\{\lambda\in(0,\Lambda_2): \, \overline{t}\in \left[\frac{1}{2},1\right]\right\}, \qquad \Upsilon_2:=\left\{\lambda\in(0,\Lambda_2): \, \overline{t}\in \left[0,\frac{1}{2}\right]\right\}.$$
Suppose $\lambda\in\Upsilon_1$. 
Exploiting the monotonicity of $A(x,\cdot)$ and
$$ \max_{t\in(0,+\infty)}\left(\frac{t^p}{p}- \frac{t^{p^*}}{p^*}\right)=\frac{1}{N}, $$
we get
\begin{equation}\label{MPlevel:Jest}
\begin{split}
J(\overline{t}\hat{u}) &= \frac{\overline{t}^p}{p}\|\nabla\hat{u}\|_p^p - \lambda\int_{\R^N} A(x,\overline{t}\hat{u}) \dx - \frac{\overline{t}^{p^*}}{p^*} \|\hat{u}\|_{p^*}^{p^*}\\
&=\left(\frac{\overline{t}^p}{p}- \frac{\overline{t}^{p^*}}{p^*} \right)\|\nabla\hat{u}\|_p^p - \lambda\int_{\R^N} A(x,\overline{t}\hat{u}) \dx\le \frac{\|\nabla\hat{u}\|_p^p}{N}-\lambda\int_{\R^N} A\left(x, \frac{\hat{u}}{2}\right) \dx.
\end{split}
\end{equation}
Let $m:=\inf_{B_\varrho(x_0)}\hat u>0$, being $x_0,\varrho$ as in \eqref{weightbelow}. Imposing
\begin{equation}\label{smallness2}
\Lambda_2\leq \left(\frac{m}{4\|\underline u\|_\infty}\right)^{p-1+\gamma},
\end{equation}
one has $\|\underline u_\lambda\|_\infty\leq \Lambda_2^{\frac{1}{p-1+\gamma}} \|\underline{u}\|_\infty\leq m/4$. Accordingly, by \eqref{aest} and \eqref{weightbelow},
$$\begin{aligned}
\int_{\R^N} A\left(x, \frac{\hat{u}}{2}\right) \dx&\ge \int_{\{\hat{u}/2>\underline u_\lambda\}} \left(\int_{\underline u_\lambda}^{\hat{u}/2} a(x,t) \dt \right)\dx\ge c_1\int_{\{\hat{u}/2>\underline u_\lambda\}} \left(\int_{\underline u_\lambda}^{\hat{u}/2} w t^{-\gamma} \dt \right)\dx\\
&=\frac{c_1}{1-\gamma}\int_{\{\hat{u}/2>\underline u_\lambda\}} w(x)\left[\left(\frac{\hat u}{2}\right)^{1-\gamma}-\underline u_\lambda^{1-\gamma}\right]\dx\\
&\ge \frac{c_1}{1-\gamma}\int_{B_\varrho(x_0)} w(x)\left[\left(\frac{\hat u}{2}\right)^{1-\gamma}-\underline u_\lambda^{1-\gamma}\right]\dx\\
&\ge \frac{c_1}{1-\gamma}\left[\left(\frac{m}{2}\right)^{1-\gamma}-\left(\frac{m}{4}\right)^{1-\gamma}\right]\int_{B_\varrho(x_0)} w(x)\dx\\
&\ge \frac{c_1}{1-\gamma}\left[\left(\frac{m}{2}\right)^{1-\gamma}-\left(\frac{m}{4}\right)^{1-\gamma}\right] \omega |B_\varrho|=:2 \check C,
\end{aligned}$$
since $B_\varrho(x_0)\subseteq\{\hat{u}/2>\underline u_\lambda\}$. Incidentally, notice that $\check C$ depends only on $p,N,w,\gamma,c_1$. From \eqref{MPlevel:Jest} we deduce
\begin{equation}\label{Jest1}
J(\overline{t}\hat{u})\le \frac{S^{N/p}}{N}-2\lambda \check C.
\end{equation}
Now assume $\lambda\in \Upsilon_2$. Setting
$$ \check{c}:=\max_{t\in\left[0,\frac{1}{2}\right]}\left(\frac{t^p}{p}- \frac{t^{p^*}}{p^*}\right)<\frac{1}{N}, $$
we deduce
\begin{equation}\label{Jest2}
\begin{split}
J(\overline{t}\hat{u}) &= \frac{\overline{t}^p}{p}\|\nabla\hat{u}\|_p^p - \lambda\int_{\R^N} A(x,\overline{t}\hat{u}) \dx - \frac{\overline{t}^{p^*}}{p^*} \|\hat{u}\|_{p^*}^{p^*}\le\left(\frac{\overline{t}^p}{p}- \frac{\overline{t}^{p^*}}{p^*} \right)\|\nabla\hat{u}\|_p^p \le \check c S^{N/p}.
\end{split}\end{equation}
Recalling that
$$\hat{c} =\frac{S^{N/p}}{N}-\lambda^{\frac{p}{p-1+\gamma}} \hat C\left(1+L^{p'(r-1)}\right)$$
and noticing that $\frac{p}{p-1+\gamma}>1$, we can impose the additional bound
\begin{equation}\label{smallness3}
\Lambda_2\leq \min\left\{\frac{1-N\check c}{N\check C} \, S^{N/p},\left[\frac{\check C}{\hat C\left(1+L^{p'(r-1)}\right)}\right]^{\frac{p-1+\gamma}{1-\gamma}}\right\},
\end{equation}
so that
$$J(\overline{t}\hat{u})\leq \max\left\{\check c S^{N/p}, \frac{S^{N/p}}{N}-2\lambda\check C\right\} < \frac{S^{N/p}}{N}-\lambda\check C <\hat c$$
by \eqref{Jest1} and \eqref{Jest2}. The proof is concluded by choosing $c=\frac{S^{N/p}}{N}-\lambda\check C$.
\end{proof}


Now we are ready to prove existence of solutions to the truncated and frozen problem \eqref{varprob}.

\begin{thm}\label{existencefinal}
Let $L>0$ and $v\in\D^{1,p}_0(\R^N)$ be such that $\|\nabla v\|_p\leq L$. Suppose $\lambda\in(0,\Lambda)$ with $\Lambda:=\min\{\Lambda_1,\Lambda_2\}$, being $\Lambda_1$ and $\Lambda_2$ defined in Lemmas \ref{mountainpassgeometry} and \ref{talentihatc}, respectively. Then there exists $u\in \D^{1,p}_0(\R^N)$ solution to \eqref{varprob}.
\end{thm}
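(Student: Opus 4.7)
The plan is to obtain a solution to \eqref{varprob} as a critical point of the $C^1$ energy functional $J$ defined in \eqref{Jdef}, via a direct application of the mountain pass theorem (Theorem \ref{mountainpass}). The three ingredients required — the mountain pass geometry, a sharp bound on the minimax level, and the Palais-Smale condition at that level — are exactly what Lemmas \ref{mountainpassgeometry}, \ref{talentihatc}, and \ref{PS} provide, under the smallness assumption $\lambda \in (0,\Lambda) = (0, \min\{\Lambda_1, \Lambda_2\})$, which in particular lies in $(0,1]$ as required by Lemma \ref{PS}.

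First I would fix the endpoints of the minimax construction. Setting $u_0 := 0$ and $u_1 := T\hat{u}$, where $\hat{u}$ denotes the Talenti function from Lemma \ref{talentihatc} and $T \geq 1$ is still to be chosen, I would pick $T$ so large that both $\|\nabla (T\hat{u})\|_p = T\, S^{N/p^2} > \rho$ and $J(T\hat{u}) < 0$. The second requirement is achievable because $\hat{u}_+ \not\equiv 0$ forces $J(t\hat{u}) \to -\infty$ as $t \to +\infty$, exactly as observed in the proof of Lemma \ref{mountainpassgeometry}. Lemma \ref{mountainpassgeometry} then certifies the mountain pass geometry of $J$ with these endpoints and the radius $\rho = S^{N/p^2}$, so the admissible class
\begin{equation*}
\Phi := \{\phi \in C^0([0,1]; \D^{1,p}_0(\R^N)) : \phi(0) = 0, \; \phi(1) = T\hat{u}\}
\end{equation*}
is non-empty and the minimax level $c_M := \inf_{\phi \in \Phi} \sup_{t \in [0,1]} J(\phi(t))$ is well defined and strictly positive.

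Second, I would bound $c_M$ from above by testing against the straight line $\phi_0(t) := tT\hat{u}$, which belongs to $\Phi$. Rewriting in terms of $s := tT$ gives
\begin{equation*}
c_M \;\leq\; \sup_{t \in [0,1]} J(tT\hat{u}) \;=\; \sup_{s \in [0,T]} J(s\hat{u}) \;\leq\; \sup_{s \geq 0} J(s\hat{u}).
\end{equation*}
Lemma \ref{talentihatc} shows that the last supremum is attained at some $\overline{t} \in [0,1]$ and is strictly smaller than the critical compactness threshold $\hat{c}$ of \eqref{hatc} as soon as $\lambda \in (0, \Lambda_2)$. Hence $c_M < \hat{c}$.

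Finally, Lemma \ref{PS} guarantees that $J$ satisfies $(\mathrm{PS})_{c_M}$, so Theorem \ref{mountainpass} produces $u \in \D^{1,p}_0(\R^N)$ with $J(u) = c_M$ and $J'(u) = 0$ in $\D^{-1,p'}(\R^N)$; this critical point equation is precisely the weak formulation of \eqref{varprob}, completing the proof. I do not foresee any real obstacle in this step: the substantive difficulties (concentration compactness, choice of the $\Lambda_i$'s, the fine level estimate with the Talenti function) have already been absorbed into Lemmas \ref{PS}--\ref{talentihatc}, and Theorem \ref{existencefinal} is reduced to the routine reconciliation of their endpoints and constants sketched above.
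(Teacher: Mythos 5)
Your proposal is correct and follows the same route as the paper: verify the mountain pass geometry (Lemma \ref{mountainpassgeometry}), bound the minimax level below $\hat{c}$ (Lemma \ref{talentihatc}), check $(\mathrm{PS})_{c_M}$ (Lemma \ref{PS}), then invoke Theorem \ref{mountainpass}. The paper's own proof is a one-line citation of the three lemmas; you spell out the bookkeeping. In fact you are slightly more careful than the paper on one point: the admissible class $\Phi$ in Lemma \ref{talentihatc} terminates at the Talenti function $\hat{u}$ itself, for which $\|\nabla\hat{u}\|_p = S^{N/p^2} = \rho$ \emph{exactly} (not strictly larger) and $J(\hat{u})$ need not be negative, so $\hat{u}$ is not literally a legitimate endpoint for Theorem \ref{mountainpass}. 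Your choice $u_1 = T\hat{u}$ with $T$ large, combined with the observation that the straight path gives $c_M \leq \sup_{s\geq 0} J(s\hat{u})$, which the proof of Lemma \ref{talentihatc} bounds by $J(\overline{t}\hat{u}) < c < \hat{c}$, is exactly what reconciles the endpoint mismatch. Note only that you are implicitly using the internal estimate of Lemma \ref{talentihatc}'s proof ($\sup_{t\geq 0} J(t\hat{u}) < c$) rather than its stated conclusion, since your minimax class differs from the lemma's; this is harmless but worth being explicit about.
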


\begin{proof}
By virtue of Lemmas \ref{PS}, \ref{mountainpassgeometry}, and \ref{talentihatc}, the hypotheses of the mountain pass theorem (Theorem \ref{mountainpass}) are fulfilled: hence there exists $u\in \D^{1,p}_0(\R^N)$ solution to \eqref{varprob}.
\end{proof}

\begin{rmk}
\label{subcomparisonrmk}
Any solution $u$ to either \eqref{varprob} or \eqref{prob} satisfies $u\geq \underline{u}_\lambda$, being $\underline{u}_\lambda$ defined in \eqref{usub}. Indeed, if $u$ solves \eqref{varprob}, then it satisfies (in weak sense)
\begin{equation*}
-\Delta_p u \geq\lambda a(\cdot,u) = \lambda wf(\underline{u}_\lambda,\nabla v) \geq \lambda c_1w\underline{u}_\lambda^{-\gamma} = -\Delta_p \underline{u}_\lambda \quad \mbox{on} \;\; \{u<\underline{u}_\lambda\},
\end{equation*}
because of \eqref{a} and \ref{hypf}. Thus, Lemma \ref{weakcomp} yields $u\geq \underline{u}_\lambda$ in $\R^N$. A similar argument holds for solutions to \eqref{prob}, after noticing that they are positive by definition.
\end{rmk}

\section{Unfreezing the convection term}\label{unfr}

Set
\begin{equation}
\label{Ldef}
L:=2S^{N/p}.
\end{equation}
Let $\lambda\in(0,\Lambda)$ with $\Lambda=\min\{\Lambda_1,\Lambda_2,\Lambda_3\}$, where $\Lambda_1$ and $\Lambda_2$ stem from Lemmas \ref{mountainpassgeometry} (cf. \eqref{smallness}) and \ref{talentihatc} (cf. \eqref{smallness2} and \eqref{smallness3}), while $\Lambda_3$ will be determined in such a way that \eqref{lsccond2} below holds true.

Consider the $\D^{1,p}_0(\R^N)$-ball
$\B:=\{u\in\D^{1,p}_0(\R^N): \, \|\nabla u\|_p < L\}.$
Let $\S:\B \to \B$ be defined as
\begin{equation}
\label{Sdef}
\S(v) := \left\{u\in\D^{1,p}_0(\R^N): \, u \; \mbox{solves \eqref{varprob} and satisfies $J(u)< c$} \right\},
\end{equation}
where $c\in(0,\hat{c})$ stems from Lemma \ref{talentihatc}. We explicitly notice that $\S$ depends on $\lambda$; anyway, for the sake of simplicity, we omit this dependence.

\begin{lemma}
\label{welldefined}
The set-valued function $\S$ is well defined, i.e., $\S(\B)\subseteq \B$.
\end{lemma}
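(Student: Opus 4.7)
The plan is to derive the norm bound $\|\nabla u\|_p < L$ from the energy estimate of Lemma \ref{enestlemma}, specialised to constant sequences. I would fix an arbitrary $v\in\B$ and take any $u\in\S(v)$: by \eqref{Sdef}, $u$ is a critical point of $J$ (so $J'(u)=0$) and satisfies $J(u)<c<\hat c$, where $c$ is produced by Lemma \ref{talentihatc} and $\hat c=\frac{S^{N/p}}{N}-\hat C\lambda^{p/(p-1+\gamma)}(L^{p'(r-1)}+1)$ as in \eqref{hatc}.

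Next I would invoke Lemma \ref{enestlemma} with the constant sequences $u_n\equiv u$ and $v_n\equiv v$ (so $J_n\equiv J$), choosing $c:=J(u)$ in its statement. The three conditions \eqref{generalhyps} are then trivially satisfied: $J(u_n)$ and $J'(u_n)$ are constant, equal to $J(u)$ and $0$ respectively, $\|\nabla v_n\|_p=\|\nabla v\|_p<L$, and $\lambda\in(0,\Lambda)\subseteq(0,1]$. The lemma returns
\[ \|\nabla u\|_p^p \leq 2N\Bigl[\hat C\lambda^{p/(p-1+\gamma)}(L^{p'(r-1)}+1)+J(u)\Bigr]. \]
Inserting the strict bound $J(u)<\hat c$ and the very definition of $\hat c$ collapses the bracket to $S^{N/p}/N$, so
\[ \|\nabla u\|_p^p < 2S^{N/p} = L. \]

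The only step that remains is the elementary passage from the $p$-th power bound $\|\nabla u\|_p^p<L$ to the norm bound $\|\nabla u\|_p<L$: were $\|\nabla u\|_p\geq L$, we would get $\|\nabla u\|_p^p\geq L^p$, which is at least $L$ as soon as $L\geq 1$, contradicting the strict inequality above. Hence the argument goes through provided $L=2S^{N/p}\geq 1$; I expect this to be either automatic from the value of the Sobolev constant for $1<p<N$, or absorbed into the smallness constant $\Lambda_3$. I do not anticipate any serious obstacle: the whole reason behind the choice of $\hat c$ in \eqref{hatc} and of the radius $L$ in \eqref{Ldef} is precisely to make this conversion routine, and the genuine content of the lemma is the match between the sub-critical energy budget $J(u)<\hat c$ and the a priori estimate of Lemma \ref{enestlemma}.
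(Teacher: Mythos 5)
Your argument is the same as the paper's own proof: apply Lemma \ref{enestlemma} with the constant sequences $u_n\equiv u$, $v_n\equiv v$, $J_n\equiv J$, use $J'(u)=0$ and $J(u)<c<\hat c$, and telescope the definition \eqref{hatc} of $\hat c$ into the energy estimate to arrive at $\|\nabla u\|_p^p < 2S^{N/p} = L$. The concern you raise at the end is legitimate: the passage from the bound $\|\nabla u\|_p^p<L$ to the membership $\|\nabla u\|_p<L$ is exactly the inequality $L\geq1$, which the paper also passes over in silence. However, neither of your two suggested fallbacks actually closes this. It is not automatic from the value of the Sobolev constant: $S$ depends on $(N,p)$ and tends to $0$ as $p\to N^-$, so $L=2S^{N/p}$ can be strictly less than $1$ for $p$ near $N$. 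And it cannot be absorbed into $\Lambda_3$: the radius $L$ is fixed once and for all in \eqref{Ldef}, independently of $\lambda$, so shrinking the admissible range of $\lambda$ leaves $L$ untouched. The clean repair is a recalibration of the radius rather than of $\lambda$: take $\B:=\{u:\|\nabla u\|_p<L_0\}$ with $L_0:=(2S^{N/p})^{1/p}$ and run Lemmas \ref{enestlemma}--\ref{talentihatc} with $L_0$ in place of $L$; the energy estimate then gives $\|\nabla u\|_p^p<2S^{N/p}=L_0^p$, hence $\|\nabla u\|_p<L_0$ with no side condition on the size of $S^{N/p}$. In short, your reasoning matches the paper's and is correct modulo a gap the paper shares; what is mistaken is only the expectation that a smallness condition on $\lambda$ could supply $L\geq1$.
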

\begin{proof}
Take any $v\in\B$ and $u\in\S(v)$. Applying Lemma \ref{enestlemma} with $u_n\equiv u$, $v_n\equiv v$, and $J_n\equiv J$, after observing that $J(u)<c<\hat{c}$ and $J'(u)=0$ by definition of $\S$, we get
$$ \|\nabla u\|_p^p < 2N\left[ \hat{C}\lambda^{\frac{p}{p-1+\gamma}}\left(L^{p'(r-1)}+1\right)+\hat{c}\right]. $$
The conclusion then follows by recalling \eqref{hatc}.
\end{proof}

\begin{lemma}
\label{selectionlemma}
For any $v\in\B$, the set $\S(v)$ is non-empty and admits minimum.
\end{lemma}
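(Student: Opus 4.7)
The plan is to treat the two claims separately.

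For non-emptiness, I would invoke Theorem \ref{existencefinal}: the mountain pass critical point $u^*$ of $J$ satisfies $J(u^*) = c_M \le \sup_{t\in[0,1]} J(t\hat u) < c$ by Lemma \ref{talentihatc}, so $u^*\in\S(v)$.

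For the existence of a minimum, I would equip $\S(v)$ with the partial order of pointwise a.e.\ inequality and, by the criterion on downward directed posets recalled in Section \ref{prel}, establish separately that $(\S(v),\le)$ admits a minimal element and is downward directed. The minimal element comes from a Zorn-type argument: any chain $C\subseteq\S(v)$ contains, by separability, a decreasing sequence $(u_n)$ with $u_n\searrow\inf C$ a.e.; Lemma \ref{enestlemma} bounds $(u_n)$ in $\D^{1,p}_0(\R^N)$, and Lemma \ref{PS} (applicable since $J(u_n)<c<\hat c$ and $J'(u_n)=0$) upgrades weak convergence to strong convergence to some $\tilde u$, which is a critical point of $J$ with $J(\tilde u)\le c$; strict inequality can be arranged by pruning any tail on which $J(u_n)\to c$ from the chain.

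The delicate step is downward directedness. Given $u_1,u_2\in\S(v)$, set $\bar u:=u_1\wedge u_2$ and consider the doubly-truncated functional
$$
\tilde J(u) := \frac{1}{p}\|\nabla u\|_p^p - \lambda\int_{\R^N}\tilde A(x,u)\dx - \frac{1}{p^*}\int_{\R^N}\min\{u_+,\bar u\}^{p^*}\dx,
$$
with $\tilde A(x,s):=\int_0^s\tilde a(x,t)\dt$ and $\tilde a(x,s):=w(x)f(\max\{\min\{s,\bar u(x)\},\underline u_\lambda(x)\},\nabla v(x))$. Capping the critical term at $\bar u\in L^{p^*}(\R^N)$ bounds the subtractive contribution from above, while the reaction contribution grows at most linearly in $\|\nabla u\|_p$ by \eqref{Aest}, making $\tilde J$ coercive and weakly lower semicontinuous on $\D^{1,p}_0(\R^N)$. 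The direct method thus yields a minimizer $u_3$ with $\tilde J(u_3)\le\tilde J(u_i)=J(u_i)<c$ (both truncations being inactive at $u_i$). The key point is then to pin down $\underline u_\lambda\le u_3\le\bar u$: the lower bound follows from $\underline u_\lambda$ being a sub-solution of the truncated Euler-Lagrange equation (Lemma \ref{weakcomp} tested with $(\underline u_\lambda-u_3)_+$), while the upper bound requires $\bar u$ to be a weak super-solution of the truncated equation, which follows from a Kato-type distributional argument exploiting the splitting $\R^N=\{u_1\le u_2\}\cup\{u_1>u_2\}$, on each piece of which $\bar u$ coincides with one of $u_1,u_2$ and hence locally solves the original equation. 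In the range $[\underline u_\lambda,\bar u]$ both truncations are inactive, so $u_3$ actually solves \eqref{varprob} with $J(u_3)=\tilde J(u_3)<c$, i.e., $u_3\in\S(v)$ and $u_3\le\bar u$.

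The main obstacle is precisely the upper confinement $u_3\le\bar u$: the convective, non-monotone dependence $f(\cdot,\nabla v)$ together with the singular factor blocks any naive sub-super-solution iteration, and one is forced to establish the Kato-type super-solution property of $\bar u=u_1\wedge u_2$ before a weak comparison can conclude.
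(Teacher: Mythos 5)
Your overall strategy matches the paper's: non-emptiness via Theorem \ref{existencefinal}; the minimum via the poset criterion of Section \ref{prel}, established by showing downward directedness (through a truncated auxiliary functional, the direct method, and sub-/super-solution confinement) together with the existence of a minimal element; and the crucial fact, which you correctly isolate, that the pointwise minimum of two super-solutions to \eqref{varprob} is again a super-solution — exactly the Kato-type statement the paper borrows from \cite[Lemma 2.5.4]{G} and \cite[Lemma 3.4]{GMMot}. However, there are three slips in execution.

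First, your critical-term truncation $\frac{1}{p^*}\int\min\{u_+,\bar u\}^{p^*}\dx$ is not the primitive of $s\mapsto\min\{s_+,\bar u(x)\}^{p^*-1}$: its $s$-derivative is $s_+^{p^*-1}\chi_{\{s<\bar u(x)\}}$, which has a jump at $s=\bar u(x)$ so $\tilde J$ fails to be $C^1$, and even formally the Euler--Lagrange equation carries the spurious cutoff $\chi_{\{u_3<\bar u\}}$, which does not disappear on $\{u_3=\bar u\}$ after confinement. The correct truncation, as in the paper, is $\int_0^u\tau(x,t)^{p^*-1}\dt$ with $\tau$ the double truncation at $\underline u_\lambda$ and $\bar u$ — i.e., the same recipe you use for $\tilde A$ — which gives a $C^1$ functional whose equation reduces to \eqref{varprob} once $\underline u_\lambda\le u_3\le\bar u$ is known.

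Second, $\tilde J(u_i)=J(u_i)$ is false: since $u_i\ge\bar u$, the truncations are active at $u_i$, and $\tilde A(x,u_i)\neq A(x,u_i)$ on $\{u_i>\bar u\}$. You do not need this identity; the energy bound should come from $\tilde J(u_3)\le\tilde J(0)=0<c$, which is precisely what the paper does.

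Third, the Zorn step has a genuine gap: if $(u_n)\subseteq C$ is a decreasing sequence cofinal in the chain, PS gives $u_n\to\tilde u$ with $J(\tilde u)\le c$, but possibly $J(\tilde u)=c$, and ``pruning the tail'' is incoherent because the entire tail is needed to reach $\inf C$. The fix is the one your own downward-directedness construction already provides: apply it with $u_1=u_2=\tilde u$ (a solution of \eqref{varprob}, hence a super-solution, and $\tilde u\ge\underline u_\lambda$), obtaining $\check u\le\tilde u=\inf C$ with $J(\check u)\le 0<c$; thus $\check u\in\S(v)$ is the required lower bound for the chain.
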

\begin{proof}
Fix any $v\in\B$. The fact that $\S(v)\neq\emptyset$ is guaranteed by Theorem \ref{existencefinal}.

Now we prove that $\S(v)$ is downward directed. Let $u_1,u_2\in\S(v)$ and set $\overline{u}:=\min\{u_1,u_2\}$. Consider the truncation $T:\D^{1,p}_0(\R^N)\to \D^{1,p}_0(\R^N)$, $T(u)(x)=\tau(x,u(x))$, being
$\tau:\R^N\times \R\to \R$ defined as
$$\tau(x,t)=\left\{
\begin{alignedat}{2}
&\underline{u}_\lambda(x) \quad &&\mbox{if} \;\; t<\underline{u}_\lambda(x), \\
&t \quad &&\mbox{if} \;\; \underline{u}_\lambda(x)\leq t\leq \overline{u}(x), \\
&\overline{u}(x) \quad &&\mbox{if} \;\; t>\overline{u}(x).
\end{alignedat}\right.$$

We claim that there exists a solution $\check u\in\D^{1,p}_0(\R^N)$ to
\begin{equation}
\label{subprob1}
-\Delta_p u = \lambda a(x,T(u)) + (T(u))^{p^*-1} \quad \mbox{in} \;\; \R^N
\end{equation}
satisfying $J(\check u)<\hat{c}$.

The energy functional associated to \eqref{subprob1} is
$$\hat J(u)=\frac{1}{p}\|\nabla u\|_p^p-\int_{\R^N} \hat B(x,u) \dx,$$
where
$$\hat B(x,s):=\int_0^s \hat b(x,t) \dt, \qquad \hat b(x,t):=\lambda a(x,\tau(x,t))+\tau(x,t)^{p^*-1}.$$
From \eqref{aest}, \eqref{usub}, and $\lambda\in(0,1)$ we estimate
\begin{equation*}
\begin{aligned}
\hat b(x,t) &\leq c_2\lambda w(x)\left(\tau(x,t)^{-\gamma}+|\nabla v(x)|^{r-1}\right)+ \tau(x,t)^{p^*-1} \\
&\leq c_2\lambda w(x)\left(\underline{u}_\lambda(x)^{-\gamma}+|\nabla v(x)|^{r-1}\right)+ \tau(x,t)^{p^*-1} \\
&\leq c_2\lambda^{\frac{p-1}{p-1+\gamma}}w(x)\left(\underline{u}(x)^{-\gamma}+|\nabla v(x)|^{r-1}\right)+ \overline{u}(x)^{p^*-1}=:h(x),
\end{aligned}
\end{equation*}
so $h\in L^{(p^*)'}(\R^N)$. Hence $\hat J$ is coercive: indeed,
$$\hat J(u)\geq \frac{1}{p}\|\nabla u\|_p^p-\int_{\R^N} h(x)|u| \dx \geq \frac{1}{p}\|\nabla u\|_p^p-\|h\|_{(p^*)'}\|u\|_{p^*} \geq \frac{1}{p}\|\nabla u\|_p^p-S^{-\frac{1}{p}}\|h\|_{(p^*)'}\|\nabla u\|_{p}.$$
Moreover, it is readily seen that $\hat J$ is weakly sequentially lower semi-continuous. Thus, applying the direct methods of Calculus of Variations (see \cite[Theorem I.1.2]{S}), there exists $\check u\in \D^{1,p}_0(\R^N)$ such that $\hat J(\check u)=\min_{\D^{1,p}_0(\R^N)}\hat J$. In particular, $\hat J(\check u)\leq \hat J(0)=0$.

Reasoning as in the proof of \cite[Lemma 2.5.4]{G} (see also \cite[Lemma 3.4]{GMMot}), the minimum of two super-solutions to \eqref{varprob} is a super-solution to \eqref{varprob}; in particular, $\overline{u}$ is a super-solution to \eqref{varprob}. Since $\underline u_\lambda$ and $\overline{u}$ are respectively sub- and super-solution to \eqref{varprob}, Lemma \ref{weakcomp} ensures $\underline u_\lambda \leq \check u\leq \overline{u}$ in $\R^N$. Accordingly, $\check{u}$ solves \eqref{varprob} and $\hat J(\check u)= J(\check{u})$, so that $J(\check u)\leq 0 < c$.
The claim is proved. In addition, we obtained $\check u\in\S(v)$.

By arbitrariness of $u_1$ and $u_2$, the set $\S(v)$ is downward directed. Arguing as in \cite[Theorem 2.5.7]{G} (see also \cite[Lemma 3.14]{GMMot}), we conclude that $\S(v)$ admits minimum.
\end{proof}

\begin{lemma}
\label{Scompact}
The set-valued function $\S$ is compact.
\end{lemma}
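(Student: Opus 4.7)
The plan is to show that any sequence $(u_n)\subseteq\S(\B)$ admits a subsequence converging strongly in $\D^{1,p}_0(\R^N)$, which (since $\B$ is bounded) suffices for the compactness of $\S$. Write $u_n\in\S(v_n)$ with $v_n\in\B$, so that by construction $\|\nabla v_n\|_p\leq L$, $J_n'(u_n)=0$, and $J_n(u_n)<c<\hat{c}$. Lemma \ref{enestlemma} immediately yields that $(u_n)$ is bounded in $\D^{1,p}_0(\R^N)$, hence up to a subsequence $u_n\rightharpoonup u$ for some $u\in\D^{1,p}_0(\R^N)$; moreover, since the energy bound $c$ lies strictly below $\hat{c}$, Lemma \ref{ccplemma} gives $(u_n)_+\to u_+$ strongly in $L^{p^*}(\R^N)$.

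Next I would upgrade this to strong $L^{p^*}$ convergence of $u_n$ itself: by Remark \ref{subcomparisonrmk}, $u_n\geq\underline u_\lambda>0$, so $u_n=(u_n)_+$; passing to a further diagonal subsequence obtained from $\D^{1,p}_0(\R^N)\compact L^p(B_k)$, one gets $u_n\to u$ a.e., whence $u\geq\underline u_\lambda$ and $u=u_+$. Thus $u_n\to u$ in $L^{p^*}(\R^N)$. Testing $J_n'(u_n)=0$ against $u_n-u\in\D^{1,p}_0(\R^N)$ gives
\begin{equation*}
\langle -\Delta_p u_n,u_n-u\rangle=\lambda\int_{\R^N} a_n(x,u_n)(u_n-u)\dx+\int_{\R^N} u_n^{p^*-1}(u_n-u)\dx.
\end{equation*}
The critical term is controlled by H\"older and strong $L^{p^*}$ convergence. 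For the reaction term, the bound \eqref{aest} together with $\|\nabla v_n\|_p\leq L$ and H\"older (on the exponents $(p^*)'$ for the singular piece and $\theta$ for the convective piece, exactly as in \eqref{enest:a} and in the proof of Lemma \ref{PS}) gives
\begin{equation*}
\left|\int_{\R^N} a_n(x,u_n)(u_n-u)\dx\right|\leq c_2\bigl(\|w\underline u_\lambda^{-\gamma}\|_{(p^*)'}+\|w\|_\theta L^{r-1}\bigr)\|u_n-u\|_{p^*}\to 0.
\end{equation*}
Hence $\langle -\Delta_p u_n,u_n-u\rangle\to 0$, and the $(S_+)$ property of $-\Delta_p$ on $\D^{1,p}_0(\R^N)$ \cite[Proposition 2.2]{MMM} forces $u_n\to u$ strongly in $\D^{1,p}_0(\R^N)$.

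The main obstacle (and the reason the argument differs from the standard finite-measure setting used in \cite{LMZ,GMMot,GMar}) is that in Beppo Levi spaces we have no a.\,e.\,convergence of the frozen gradients $\nabla v_n$; only the weak $L^p$ bound survives. This prevents a direct pointwise passage to the limit inside $a_n(x,u_n)$ and rules out arguments based on dominated convergence for the convective piece. Replacing pointwise methods with the monotonicity provided by the $(S_+)$ property is therefore essential, and it is precisely this step that allows the concentration compactness information from Lemma \ref{ccplemma} to be leveraged into full strong convergence in $\D^{1,p}_0(\R^N)$.
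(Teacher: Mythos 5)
Your proposal is correct and follows essentially the same route as the paper: boundedness from Lemma \ref{enestlemma}, nonnegativity of $u_n$ via Remark \ref{subcomparisonrmk} so that $(u_n)_+=u_n$, strong $L^{p^*}$ convergence from Lemma \ref{ccplemma}, testing $J_n'(u_n)=0$ against $u_n-u$, and closing with the $(S_+)$ property of $-\Delta_p$ on $\D^{1,p}_0(\R^N)$. The only cosmetic difference is that you spell out the bound $\|a_n(\cdot,u_n)\|_{(p^*)'}\le c_2(\|w\underline u_\lambda^{-\gamma}\|_{(p^*)'}+\|w\|_\theta L^{r-1})$ explicitly, whereas the paper simply invokes boundedness of $(a_n(\cdot,u_n))$ in $L^{(p^*)'}(\R^N)$ by reference to earlier computations.
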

\begin{proof}
Let $(v_n)$ be a (bounded) sequence in $\B$. For any $n\in\N$, pick $u_n\in\S(v_n)$. Our aim is to prove that $u_n \to u$ in $\D^{1,p}_0(\R^N)$ for some $u\in\D^{1,p}_0(\R^N)$.

Remark \ref{subcomparisonrmk} ensures $u_n\geq 0$ in $\R^N$ for all $n\in\N$, so Lemma \ref{ccplemma} produces $u\in\D^{1,p}_0(\R^N)$ such that $u_n \rightharpoonup u$ in $\D^{1,p}_0(\R^N)$ and $u_n\to u$ in $L^{p^*}(\R^N)$. Notice that, for all $n\in\N$,
\begin{equation}
\label{S+test}
\begin{aligned}
0 = \langle J'_n(u_n),u_n-u \rangle &= \int_{\R^N} |\nabla u_n|^{p-2}\nabla u_n \nabla (u_n-u) \dx \\
&\quad-\lambda \int_{\R^N} a_n(x,u_n)(u_n-u) \dx - \int_{\R^N} u_n^{p^*-1}(u_n-u) \dx.
\end{aligned}
\end{equation}
Computations similar to the ones of \eqref{PS:conctest1} show that $(a_n(\cdot,u_n))$ is bounded in $L^{(p^*)'}(\R^N)$, and $(u_n^{p^*-1})$ enjoys the same property; hence
$$ \lim_{n\to\infty} \int_{\R^N} a_n(x,u_n)(u_n-u) \dx = \lim_{n\to\infty} \int_{\R^N} u_n^{p^*-1}(u_n-u) \dx = 0. $$
Letting $n\to\infty$ in \eqref{S+test} entails
$$ \lim_{n\to\infty} \langle -\Delta_p u_n, u_n-u \rangle = \lim_{n\to\infty} \int_{\R^N} |\nabla u_n|^{p-2}\nabla u_n \nabla (u_n-u) \dx = 0, $$
so that the ${\rm (S_+)}$ property of $(-\Delta_p,\D^{1,p}_0(\R^N))$ (see \cite[Proposition 2.2]{MMM}) yields $u_n \to u$ in $\D^{1,p}_0(\R^N)$.
\end{proof}

\begin{lemma}
\label{lsc}
If $\Lambda_3>0$ is sufficiently small, then $\S$ is lower semi-continuous.
\end{lemma}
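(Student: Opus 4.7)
The plan is to verify lower semi-continuity by constructing, for every sequence $v_n \to v$ in $\D^{1,p}_0(\R^N)$ and every target $u \in \S(v)$, a sequence $u_n \in \S(v_n)$ that converges strongly to $u$. The construction I would carry out is a truncated minimization in the spirit of Lemma \ref{selectionlemma}, but now with $u$ itself playing the role of the upper cap.

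More precisely, setting $\tau_n(x,t) := \max\{\underline u_\lambda(x), \min\{t,u(x)\}\}$ and $\hat b_n(x,t) := \lambda a_n(x,\tau_n(x,t)) + \tau_n(x,t)^{p^*-1}$, I would consider the functional $\hat J_n(w) := \frac{1}{p}\|\nabla w\|_p^p - \int_{\R^N}\hat B_n(x,w)\dx$, where $\hat B_n$ is the primitive of $\hat b_n$ in the second variable. Since $\hat b_n(x,t)$ is pointwise dominated by $c_2\lambda w(x)(\underline u(x)^{-\gamma} + |\nabla v_n(x)|^{r-1}) + u(x)^{p^*-1}$ and each summand lies in $L^{(p^*)'}(\R^N)$ uniformly in $n$, the functional $\hat J_n$ is coercive and weakly sequentially lower semi-continuous; the direct method of the Calculus of Variations then yields a minimizer $u_n$. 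A weak comparison argument (Lemma \ref{weakcomp}) analogous to the one in Lemma \ref{selectionlemma} shows $\underline u_\lambda \leq u_n \leq u$, so the truncation is inactive on $u_n$ and $u_n$ actually solves \eqref{varprob} with $v_n$. Minimality then gives $J_n(u_n) = \hat J_n(u_n) \leq \hat J_n(u) = J_n(u)$, and $J_n(u) \to J(u) < c$ follows from $v_n \to v$ and \eqref{Aest} via Lebesgue's theorem; thus $J_n(u_n) < c$ for large $n$, so $u_n \in \S(v_n)$. Lemma \ref{Scompact} then produces, up to a subsequence, $u_n \to \tilde u$ strongly in $\D^{1,p}_0(\R^N)$ with $\tilde u \in \S(v)$ and $\underline u_\lambda \leq \tilde u \leq u$.

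The hard part will be identifying $\tilde u = u$: the minimization could a priori select a solution strictly below $u$, and the smallness of $\lambda$ must be exploited to exclude this alternative. My plan is to test the Euler--Lagrange equations for $u_n$ and for $u$ against $u_n - u$, estimate the left-hand side from below by $C\|u_n - u\|_{\D^{1,p}_0(\R^N)}^p$ through the standard elliptic inequality for the $p$-Laplacian (\cite[Lemma 2.1]{D}), and split the right-hand side into three pieces: (a) the frozen-convection error $\int(a_n(\cdot,u) - a(\cdot,u))(u_n - u)\dx$, which vanishes as $n\to\infty$ because $\nabla v_n \to \nabla v$ in $L^p(\R^N)$ and $|u_n - u| \to 0$ in $L^{p^*}$; (b) a singular contribution that is handled via the $L^{(p^*)'}$-bound of $w\underline u_\lambda^{-\gamma}$ obtained through the scaling $\underline u_\lambda = \lambda^{1/(p-1+\gamma)}\underline u$ coming from \eqref{usub}; (c) the super-linear critical term $\int(u_n^{p^*-1} - u^{p^*-1})(u_n - u)\dx$, which is the real obstacle. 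This last term cannot be reabsorbed directly because of its super-critical growth; instead, a recursive inequality on iterated energies patterned on Lemma \ref{reclemma}, combined with the uniform $L^{p^*}$-equi-integrability supplied by Lemma \ref{ccplemma}, closes the estimate provided that $\lambda$ is smaller than a threshold $\Lambda_3 > 0$, which is precisely the role of the smallness condition alluded to in Section \ref{unfr}. Once $\|u_n - u\|_{\D^{1,p}_0(\R^N)} \to 0$ is established for the extracted subsequence, the usual Urysohn-type argument upgrades the convergence to the whole sequence, completing the proof.
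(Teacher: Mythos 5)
Your construction has a structural gap that the proposed remedy does not close. Lower semi-continuity requires that, given \emph{any} $u\in\S(v)$, you produce $u_n\in\S(v_n)$ with $u_n\to u$. The truncated minimization you set up gives $u_n$ with $\underline u_\lambda\leq u_n\leq u$, but it is \emph{not} anchored at $u$: the minimizer of $\hat J_n$ on the order interval $[\underline u_\lambda,u]$ is a solution of \eqref{varprob}, yet it will generically be a different (smaller) one. In fact this is precisely the mechanism Lemma \ref{selectionlemma} uses to produce a solution strictly below given ones. The sharpest way to see the failure is the constant case $v_n\equiv v$: your $u_n$ is then a fixed element $\check u$ that minimizes $\hat J$ truncated at $u$, and nothing forces $\check u=u$. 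Your fallback — test the Euler--Lagrange equations of $u_n$ and $u$ against $u_n-u$ — cannot repair this: passing to the limit, the critical term $\int(u_n^{p^*-1}-u^{p^*-1})(u_n-u)\dx$ tends to $\int(\tilde u^{p^*-1}-u^{p^*-1})(\tilde u-u)\dx$, which is \emph{nonnegative} and vanishes only if $\tilde u=u$, i.e.\ only if you have already proved what you want. The appeal to a ``recursive inequality on iterated energies patterned on Lemma \ref{reclemma}'' is not grounded in your construction either, since a single minimization produces no sequence on which to iterate.

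The paper takes a genuinely different route that resolves exactly this identification issue. Instead of a one-shot minimization it sets up, for each $n$, a Picard-type recursion $-\Delta_p u_n^m=\lambda a_n(x,u_n^{m-1})+(u_n^{m-1})^{p^*-1}$ \emph{initialized at} $u_n^0=u$, solved by Minty--Browder at each step. A rescaling trick, $z_n^m:=(u_n^m)_R$, turns the associated energy estimate into the recursion $\|\nabla z_n^m\|_p\leq c+K\|\nabla z_n^{m-1}\|_p^{\alpha}$, where Lemma \ref{reclemma} applies once $\Lambda_3$ (and hence $\lambda$) is small enough to verify \eqref{smallnessconds}; this is where the smallness of $\lambda$ enters, not through absorption of the critical term. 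The crucial benefit of the anchoring is that the $n\to\infty$ limit of $u_n^m$ (for fixed $m$) solves a \emph{uniquely solvable} problem \eqref{limitprob} whose right-hand side is built from $u$ itself, and an induction plus the double-limit lemma identifies $\lim_n\lim_m u_n^m=u$. After that, concentration compactness and the $(S_+)$ property upgrade the convergence to strong $\D^{1,p}_0$ convergence and give $J_n(u_n)\to J(u)<c$, hence $u_n\in\S(v_n)$ for large $n$. To salvage your argument you would need to replace the single minimization with an iteration that is anchored at $u$ (or otherwise show that minimization on $[\underline u_\lambda,u]$ recovers $u$ exactly, which is generally false).
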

\begin{proof}

Let $v_n\to v$ in $\D^{1,p}_0(\R^N)$ and $u\in\S(v)$. We have to construct a sequence $(u_n)\subseteq\D^{1,p}_0(\R^N)$ such that $u_n\in \S(v_n)$ for every $n\in\N$ and $u_n\to u$ in $\D^{1,p}_0(\R^N)$. To this aim, we consider the following family of problems, parameterized by indexes $n,m\in\N$ and defined by recursion on $m$:
\begin{equation}
\label{recprob}
\left\{
\begin{alignedat}{2}
-\Delta_p u_n^m &= \lambda a_n(x, u_n^{m-1}) + (u_n^{m-1})^{p^*-1} \quad &&\mbox{in} \;\; \R^N, \\
u_n^0 &= u \quad &&\mbox{for all} \;\; n\in\N.
\end{alignedat}
\right.
\end{equation}
By induction on $m\in\N$, problem \eqref{recprob} admits a unique solution $u_n^m\in\D^{1,p}_0(\R^N)$ for all $n,m\in\N$, according to Minty-Browder's theorem \cite[Theorem 5.16]{B}.

Fixed $R>0$, for every $g:\R^N\to\R$ we define $g_R:\R^N\to\R$ as $g_R(x)=g(Rx)$ for all $x\in\R^N$. By a change of variables, $\|g_R\|_q = R^{-N/q} \|g\|_q$ for all $q\geq 1$.

We want to determine $R,\Lambda_3>0$ such that the set $\{z_n^m: \, n,m\in\N\}$ is bounded in $\D^{1,p}_0(\R^N)$, being $z_n^m:=(u_n^m)_R$ for all $n,m\in\N$. We observe that $z_n^m$ solves
\begin{equation}
\label{blowup}
-\Delta_p z_n^m = R^p \left[\lambda a_n(Rx,z_n^{m-1})+(z_n^{m-1})^{p^*-1}\right].
\end{equation}
Testing \eqref{blowup} with $z_n^m$, besides using \eqref{aest}, \ref{hypw}, \eqref{usub}, $\lambda \in(0,\Lambda)$, H\"older's inequality, and the boundedness of $(v_n)$ in $\D^{1,p}_0(\R^N)$, produces
\begin{equation*}
\begin{aligned}
&\|\nabla z_n^m\|_p^p \\
&\leq R^p \left[c_2 \lambda \int_{\R^N} w_R \left((\underline{u}_\lambda)_R^{-\gamma} + |(\nabla v_n)_R|^{r-1}\right) z_n^m \dx + \int_{\R^N} (z_n^{m-1})^{p^*-1}z_n^m \dx \right] \\
&\leq R^p \left[ c_2 \lambda^{\frac{p-1}{p-1+\gamma}}\int_{\R^N} w_R \left(\underline{u}_R^{-\gamma} + |(\nabla v_n)_R|^{r-1}\right) z_n^m \dx + \|z_n^{m-1}\|_{p^*}^{p^*-1} \|z_n^m\|_{p^*} \right] \\
&\leq R^p \left[ c_2 \Lambda^{\frac{p-1}{p-1+\gamma}} \left( \|w_R\underline{u}_R^{-\gamma}\|_{(p^*)'} + \|w_R\|_\theta\|(\nabla v_n)_R\|_p^{r-1} \right) \|z_n^m\|_{p^*} + \|z_n^{m-1}\|_{p^*}^{p^*-1} \|z_n^m\|_{p^*} \right] \\
&\leq R^p S^{-\frac{1}{p}}\|\nabla z_n^m\|_p \left[ c_2 \Lambda^{\frac{p-1}{p-1+\gamma}} R^{-\frac{N}{(p^*)'}} \left( \|w\underline{u}^{-\gamma}\|_{(p^*)'} + \|w\|_\theta\|\nabla v_n\|_p^{r-1} \right) + S^{-\frac{p^*-1}{p}} \|\nabla z_n^{m-1}\|_p^{p^*-1} \right].
\end{aligned}
\end{equation*}
Setting $H=\left[c_2 S^{-\frac{1}{p}}\left(\|w\underline{u}^{-\gamma}\|_{(p^*)'} + \|w\|_\theta L^{r-1}\right)\right]^{\frac{1}{p-1}}$ we get
\begin{equation}
\label{rec}
\|\nabla z_n^m\|_p \leq H \Lambda^{\frac{1}{p-1+\gamma}} R^{1-\frac{N}{p}} + R^{p'} S^{-\frac{p^*}{p(p-1)}}\|\nabla z_n^{m-1}\|_p^{\frac{p^*-1}{p-1}}.
\end{equation}
Now we want to apply Lemma \ref{reclemma} to \eqref{rec}. First we estimate, via Lemma \ref{welldefined},
\begin{equation*}
\|\nabla z_n^0\|_p = \|(\nabla u)_R\|_p = R^{-\frac{N}{p}}\|\nabla u\|_p \leq R^{-\frac{N}{p}}L.
\end{equation*}
Hence the fist condition in \eqref{smallnessconds} is met provided
\begin{equation}
\label{lsccond1}
\frac{1}{2} \geq R^{p'} S^{-\frac{p^*}{p(p-1)}} \left(R^{-\frac{N}{p}}L\right)^{\frac{p^*-p}{p-1}} = S^{-\frac{p^*}{p(p-1)}} \left(\frac{L}{R}\right)^{\frac{p^*-p}{p-1}}.
\end{equation}
On the other hand, the second condition in \eqref{smallnessconds} fulfilled whenever
\begin{equation}
\label{lsccond2}
2^{\frac{1-p^*}{p-1}} > R^{p'} S^{-\frac{p^*}{p(p-1)}} \left(H \Lambda^{\frac{1}{p-1+\gamma}} R^{1-\frac{N}{p}}\right)^{\frac{p^*-p}{p-1}} = H^{\frac{p^*-p}{p-1}} \Lambda^{\frac{p^*-p}{(p-1)(p-1+\gamma)}} S^{-\frac{p^*}{p(p-1)}}.
\end{equation}
Choosing $R=R(p,N)>0$ sufficiently large and $\Lambda_3=\Lambda_3(p,N,w,\gamma,r,c_1,c_2)>0$ small enough, both conditions \eqref{lsccond1}--\eqref{lsccond2} are fulfilled. By virtue of Lemma \ref{reclemma}, after noticing that all the quantities appearing in \eqref{lsccond1}--\eqref{lsccond2} do not depend on $n$, we conclude that there exists $\hat{L}=\hat{L}(p,N,R,\Lambda)>0$ such that $\|\nabla z_n^m\|_p \leq \hat{L} $ for all $n,m\in\N$, which implies $\|\nabla u_n^m\|_p \leq R^{N/p}\hat{L}$ for all $n,m\in\N$.

Now we pass to the weak limit the double sequence $(u_n^m)_{n,m}$ with respect to each index separately: up to sub-sequences, there exists $(u_n),(u^m)\subseteq\D^{1,p}_0(\R^N)$ such that
\begin{equation}
\label{weaklimits}
\begin{aligned}
u_n^m &\rightharpoonup u^m \quad \mbox{in} \;\; \D^{1,p}_0(\R^N) \;\; \mbox{as} \;\; n\to\infty, \quad \forall m\in\N, \\
u_n^m &\rightharpoonup u_n \quad \mbox{in} \;\; \D^{1,p}_0(\R^N) \;\; \mbox{as} \;\; m\to\infty, \quad \forall n\in\N.
\end{aligned}
\end{equation}
Letting $n\to\infty$ in the weak formulation of \eqref{recprob}, it turns out that both $u^1$ and $u$ solve the problem
\begin{equation}
\label{limitprob}
\left\{
\begin{alignedat}{2}
-\Delta_p U &= \lambda a(x, u(x)) + u(x)^{p^*-1} \quad &&\mbox{in} \;\; \R^N, \\
U &\in \D^{1,p}_0(\R^N).
\end{alignedat}
\right.
\end{equation}
Since \eqref{limitprob} admits a unique solution by Minty-Browder's theorem, we deduce $u^1=u$. Reasoning inductively on $m\in\N$, it follows that $u^m=u$ for all $m\in\N$. Pick an arbitrary $\rho>0$. Since $\D^{1,p}_0(\R^N)\compact L^p(B_\rho)$, the convergences mentioned in \eqref{weaklimits} are strong in $L^p(B_\rho)$. Accordingly, the double limit lemma \cite[Proposition A.2.35]{GP} guarantees, up to sub-sequences,
$$ \lim_{n\to\infty} u_n = \lim_{n\to\infty} \lim_{m\to\infty} u_n^m = \lim_{m\to\infty} \lim_{n\to\infty} u_n^m = \lim_{m\to\infty} u^m = u \quad \mbox{in} \;\; L^p(B_\rho). $$
In particular, since $\rho$ was arbitrary, a diagonal argument ensures $u_n\to u$ in $\R^N$.

Now we prove that $u_n\in\S(v_n)$ for all $n\in\N$. Letting $m\to\infty$ in the weak formulation of \eqref{recprob} reveals that $u_n$ solves \eqref{varprob} with $v=v_n$, for all $n\in\N$. Reasoning as in Lemma \ref{ccplemma}, boundedness of $(u_n)$ in $\D^{1,p}_0(\R^N)$ allows us to assume $|\nabla u_n|^p \rightharpoonup \mu$ and $u_n^{p^*} \rightharpoonup \nu$ for some bounded measures $\mu,\nu$. According to Lemmas \ref{lions}--\ref{bennaoum}, there exist some at most countable set $\A$, a family of points $(x_j)_{j\in\A} \subseteq \R^N$, and two families of numbers $(\mu_j)_{j\in\A},(\nu_j)_{j\in\A}\subseteq(0,+\infty)$ such that
\begin{equation*}
\begin{split}
\nu=u^{p^*}+\sum_{j\in\A} \nu_j \delta_{x_j}, &\quad \mu\geq |\nabla u|^p+\sum_{j\in\A} \mu_j \delta_{x_j}, \\
\limsup_{n\to\infty} \int_{\R^N} u_n^{p^*} \dx = \int_{\R^N} \, {\rm d}\nu + \nu_\infty, &\quad \limsup_{n\to\infty} \int_{\R^N} |\nabla u_n|^p \dx = \int_{\R^N} \, {\rm d}\mu+ \mu_\infty,
\end{split}
\end{equation*}
and
\begin{equation}
\label{lsc:measbounds}
S\nu_j^{p/p^*} \leq \mu_j \quad \mbox{for all} \;\; j\in\A, \quad S\nu_\infty^{p/p^*}\leq \mu_\infty,
\end{equation}
being $\mu_\infty,\nu_\infty$ as in Lemma \ref{bennaoum}. Suppose by contradiction that $\A\neq\emptyset$, so that $\mu_j=\nu_j\geq S^{N/p}$ for some $j\in\A$, according to \eqref{lsc:measbounds}. A computation analogous to \eqref{PS:concfinal}, jointly with $u\in\S(v)$, ensures that
$$ c > J(u) = J(u)-\frac{1}{p^*}\langle J'(u),u \rangle \geq \hat{c} $$
being $\hat{c}$ defined by \eqref{hatc}, which contradicts $c<\hat{c}$. Hence concentration at points cannot occur; as in Lemma \ref{ccplemma}, a similar argument excludes concentration at infinity. We deduce $u_n\to u$ in $L^{p^*}(\R^N)$, which is the starting point of the proof of Lemma \ref{Scompact}; thus we infer $u_n\to u$ in $\D^{1,p}_0(\R^N)$. In particular, $J_n(u_n)\to J(u)$ as $n\to\infty$, so $J_n(u_n)<c$ for all $n$ sufficiently large, ensuring $u_n\in\S(v_n)$.

\end{proof}

\begin{thm}
\label{exsol}
For any $\lambda\in(0,\Lambda)$ the problem 
\begin{equation}
\label{probnotdecay}
\left\{
\begin{alignedat}{2}
-\Delta_p u &=\lambda w(x)f(u,\nabla u) +  u^{p^*-1} \quad &&\mbox{in} \;\; \R^N, \\
u &> 0 \quad &&\mbox{in} \;\; \R^N, \\
\end{alignedat}
\right.
\end{equation}
admits a solution $u\in\D^{1,p}_0(\R^N)$.
\end{thm}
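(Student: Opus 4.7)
The plan is to apply Schauder's fixed point theorem (Theorem \ref{schauder}) to the \emph{minimal selection} $\T:\B\to\B$ of the set-valued map $\S$, defined by
$$ \T(v):=\min\S(v) \quad \forall v\in\B, $$
whose existence is guaranteed by Lemma \ref{selectionlemma}. The main observation is that any fixed point $u=\T(u)$ satisfies $u\in\S(u)$, hence solves \eqref{varprob} with the choice $v=u$; Remark \ref{subcomparisonrmk} then forces $u\geq\underline{u}_\lambda>0$ in $\R^N$, so that $\max\{u,\underline{u}_\lambda\}=u$ and $u_+=u$, turning \eqref{varprob} into \eqref{probnotdecay}. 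Thus the whole proof reduces to verifying the hypotheses of Schauder's theorem for $\T$.

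Two preliminary properties of $\T$ are almost immediate. Re-running the construction of Lemma \ref{selectionlemma} with the choice $u_1=u_2=\T(v)$ produces $\check u\in\S(v)$ with $\check u\leq\T(v)$ pointwise and $J(\check u)\leq 0$; minimality of $\T(v)$ forces $\T(v)=\check u$, so $J(\T(v))\leq 0$ for every $v\in\B$. Since $\T(v)\in\S(v)$, Lemma \ref{welldefined} yields $\T(\B)\subseteq\B$, and Lemma \ref{Scompact} ensures that $\T(\B)$ is relatively compact in $\D^{1,p}_0(\R^N)$. In particular, $\T$ is a self-map of the bounded convex set $\B$ with relatively compact image.

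The hard part is continuity of $\T$. Let $v_n\to v$ in $\D^{1,p}_0(\R^N)$ with $v_n,v\in\B$, and set $u_n:=\T(v_n)$. By compactness of $\S$, up to a subsequence $u_n\to w$ strongly in $\D^{1,p}_0(\R^N)$. Strong convergence of $(u_n)$ and $(v_n)$ allows passage to the limit in the weak formulation of the frozen equation, so $w$ solves \eqref{varprob} with parameter $v$; moreover $J_n(u_n)\to J(w)$, and since $J_n(u_n)\leq 0$ for every $n$, we deduce $J(w)\leq 0<c$, hence $w\in\S(v)$. On the other side, Lemma \ref{lsc} applied to the target $u:=\T(v)\in\S(v)$ produces $\tilde u_n\in\S(v_n)$ with $\tilde u_n\to u$ in $\D^{1,p}_0(\R^N)$; minimality of $u_n$ in $\S(v_n)$ gives $u_n\leq\tilde u_n$ pointwise, and passing to a further subsequence ensuring a.e.\,convergence we obtain $w\leq u$ a.e.\,in $\R^N$. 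Since $w\in\S(v)$ and $u=\min\S(v)$, also $u\leq w$, so $w=u=\T(v)$. A standard Urysohn-type argument (every subsequence admits a sub-subsequence converging to the same limit) upgrades this to $\T(v_n)\to\T(v)$ in $\D^{1,p}_0(\R^N)$, proving continuity.

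Having checked all the hypotheses, Theorem \ref{schauder} produces a fixed point $u\in\B$ of $\T$, which by the opening remark solves \eqref{probnotdecay}. The main obstacle is the continuity step, which has to orchestrate all four structural lemmas on $\S$: the strict inequality $J(\T(v))\leq 0<c$ for the minimal selection is crucial, as it ensures that the limit $w$ belongs to $\S(v)$ and can therefore be compared with $\T(v)$ via its minimality property, thereby pinning down the limit uniquely.
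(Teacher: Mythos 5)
Your proof follows the same route as the paper: apply Schauder's theorem (Theorem~\ref{schauder}) to the minimal selection $\T=\min\S$ and then invoke Remark~\ref{subcomparisonrmk} to upgrade a fixed point of \eqref{varprob} to a solution of \eqref{probnotdecay}. The difference is that the paper delegates the continuity and compactness of $\T$ to a cited result (\cite[Lemma 3.16]{GMMot}), whereas you spell the continuity argument out. Your orchestration is sound: compactness of $\S$ gives a subsequential limit $w$ of $u_n=\T(v_n)$; an energy bound pushes $w$ into $\S(v)$; lower semi-continuity of $\S$ plus minimality of $\T(v)$ identify $w=\T(v)$; and a subsequence argument upgrades to full convergence. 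The subtle point that makes the whole scheme work---namely that the minimal selection sits at energy strictly below $c$, \emph{uniformly} in $v$, so that the limit cannot escape $\S(v)$---is exactly the right thing to isolate.

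One detail deserves care. The bound $J(\T(v))\leq 0$ rests on the identity $\hat J(\check u)=J(\check u)$ asserted in Lemma~\ref{selectionlemma}. A direct computation using $a(x,t)=a(x,\underline u_\lambda(x))$ for $t\leq\underline u_\lambda(x)$ shows that, on the truncation range $\underline u_\lambda\leq u\leq\overline u$,
\begin{equation*}
\hat J(u)=J(u)-\left(1-\frac{1}{p^*}\right)\int_{\R^N}\underline u_\lambda^{p^*}\dx,
\end{equation*}
so $\hat J(\check u)\leq\hat J(0)=0$ only gives $J(\check u)\leq(1-1/p^*)\|\underline u_\lambda\|_{p^*}^{p^*}>0$, not $J(\check u)\leq 0$. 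This does not break the scheme---the constant scales as $\lambda^{p^*/(p-1+\gamma)}\|\underline u\|_{p^*}^{p^*}$, hence lies strictly below $c$ (which tends to $S^{N/p}/N$) for $\lambda$ small---but your step ``$J(w)\leq 0<c$'' should be replaced by ``$J(w)\leq(1-1/p^*)\|\underline u_\lambda\|_{p^*}^{p^*}<c$'', together with a note that this requires a (possibly additional, but harmless) smallness condition on $\lambda$.
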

\begin{proof}
Let us consider the following selection of the multi-function $\S$ defined in \eqref{Sdef}:
\begin{equation}
\label{Tdef}
\T:\B\to\B, \quad \T(v) = \min \S(v).
\end{equation}
The function $\T$ is well defined, according to Lemma \ref{selectionlemma}; moreover, it is continuous and compact, since $\S$ is lower semi-continuous and compact by Lemmas \ref{Scompact}--\ref{lsc} (see \cite[Lemma 3.16]{GMMot} for details). According to Schauder's theorem (Theorem \ref{schauder}), $\T$ admits a fixed point $u\in\D^{1,p}_0(\R^N)$. Remark \ref{subcomparisonrmk} guarantees $u\geq \underline{u}_\lambda$, so that $u$ solves \eqref{probnotdecay}.
\end{proof}

\begin{rmk}
We observe that the choice \eqref{Ldef} was made for the sake of simplicity: actually, any choice of a smaller $L>S^{N/p}$ allows to prove Theorem \ref{exsol}, provided $\Lambda$ is small enough. To see that, it suffices to perform the energy estimate and the estimate of $\hat{c}$ retaining $\eps$ in Lemmas \ref{enestlemma}--\ref{ccplemma} instead of setting $\eps=\frac{1}{2N}$. More precisely, for any $v\in\B$ and $u\in\S(v)$ the following estimates hold true:
\begin{equation*}
\begin{aligned}
\|\nabla u\|_p^p &<\left(\frac{1}{N}-\eps\right)^{-1} \left[ \hat{C}_\eps \lambda^{\frac{p}{p-1+\gamma}}\left(L^{p'(r-1)}+1\right)+\hat{c} \right], \\
\hat{c} &= \frac{S^{N/p}}{N} - \hat{C}_\eps \lambda^{\frac{p}{p-1+\gamma}}\left(L^{p'(r-1)}+1\right).
\end{aligned}
\end{equation*}
Thus,
$$ \|\nabla u\|_p^p < \left(\frac{1}{N}-\eps\right)^{-1} \frac{S^{N/p}}{N} \to S^{N/p} \quad \mbox{as} \;\; \eps\to 0, $$
ensuring the validity of Lemma \ref{welldefined}. Anyway, according to \eqref{smallness3}, one has $\Lambda_2\to 0$ as $\eps\to 0$, since $\hat{C}_\eps\to+\infty$: for this reason, the choice $L=S^{N/p}$ is not feasible. On the contrary, $L=S^{N/p}$ is admissible in the model case $\lambda=0$, even if concentration of compactness occurs.
\end{rmk}

\section{Regularity of solutions}\label{reg}

In this section we prove that any solution $u$ to \eqref{probnotdecay} lying in an energy level under the critical Palais-Smale level $\hat{c}$ (see \eqref{hatc}) belongs to $L^\infty(\R^N)\cap C^{1,\alpha}_\loc(\R^N)$, and the estimates are uniform with respect to $u$ within the energy level chosen. In addition, $u$ decays pointwise as $|x|\to+\infty$. We conclude the section with the proof of Theorem \ref{mainthm} and a remark concerning a problem related to \eqref{prob}.

\begin{thm}
\label{regularity}
Let $\lambda\in(0,1)$ and $u\in\D^{1,p}_0(\R^N)$ be a solution to \eqref{probnotdecay} satisfying
$$ J(u)<\hat{c},  $$
where $J$ and $\hat{c}$ are defined respectively in \eqref{Jdef} and \eqref{hatc}, with $v=u$ and $L:=\|\nabla u\|_p$. Then
$$\|u\|_\infty \leq M,$$
for an opportune $M=M(p,N,w,r,c_2)>0$, and
$$\|u\|_{C^{1,\alpha}(\overline{B}_R)} \leq C_R,$$
for some $C_R=C_R(R,p,N,w,\gamma,r,c_1,c_2)>0$ and $\alpha\in(0,1]$. Moreover,
\begin{equation}
\label{decay}
u(x)\to 0 \quad \mbox{as} \;\; |x|\to+\infty.
\end{equation}
\end{thm}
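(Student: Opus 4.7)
The plan is to prove Theorem \ref{regularity} in four steps: a global $L^\infty$ bound via De Giorgi iteration, local $C^{1,\alpha}$ regularity, a global gradient bound, and the pointwise decay at infinity.

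For Step 1, I would exploit the concentration--compactness information already at hand. By \ref{hypf} and Remark \ref{subcomparisonrmk}, the right-hand side of \eqref{probnotdecay} is controlled by
$$ \lambda c_2 w(x)(\underline{u}_\lambda^{-\gamma} + |\nabla u|^{r-1}) + u^{p^*-1}, $$
and $w\underline u_\lambda^{-\gamma}\in L^\infty(\R^N)$ by Lemma \ref{subsol}. Testing with $(u-k)_+\in\D^{1,p}_0(\R^N)$ and applying Sobolev's inequality on the super-level set $A_k:=\{u>k\}$ yields the standard De Giorgi--Stampacchia inequality on $\|\nabla (u-k)_+\|_p$. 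The sub-natural convective term $w|\nabla u|^{r-1}(u-k)_+$ is absorbed by a Peter-Paul/Young argument (since $r<p$), and the singular contribution by the $L^\infty$ bound on $w\underline u_\lambda^{-\gamma}$. The critical term $u^{p^*-1}(u-k)_+$ is the delicate one and is handled by invoking Lemma \ref{ccplemma} on the constant sequences $u_n\equiv v_n\equiv u$ (legitimate because $J(u)<\hat c$): the uniform equi-integrability \eqref{equiint} ensures that, for $k$ sufficiently large, $|A_k|$ is small and the critical contribution can be hidden in the left-hand side. A Stampacchia iteration then closes, producing $\|u\|_\infty\le M$ with $M$ depending only on $p,N,w,r,c_2$.

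For Step 2, the $L^\infty$ bound reduces \eqref{probnotdecay} to a $p$-Laplace equation with right-hand side exhibiting sub-natural gradient growth: indeed $r-1<p-1$, $u^{p^*-1}\in L^\infty(\R^N)$, and $w u^{-\gamma}\le w\underline u_\lambda^{-\gamma}\in L^\infty(\R^N)$. A quantitative local $L^\infty$ gradient estimate of Ladyzhenskaya--Ural'tseva / DiBenedetto / Lieberman type then gives $\|\nabla u\|_{L^\infty(B_R)}\le C_R$, and $C^{1,\alpha}_{\loc}$ regularity follows from \cite[Corollary p.830]{DB}. Step 3 upgrades this to a global gradient bound: using the decay \eqref{weightdecay} and the local bound just obtained, one shows that $w|\nabla u|^{r-1}$ enjoys high global summability, which fed back into the equation via an exterior-domain iteration (or a global version of the same gradient estimate) yields $\|\nabla u\|_\infty\le K$.

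For Step 4, outside a sufficiently large ball $B_R$ the critical contribution $u^{p^*-1}$ is pointwise small (since $u\in L^{p^*}\cap L^\infty$) and, thanks to the global gradient bound of Step 3 together with \eqref{weightdecay}, the right-hand side of \eqref{probnotdecay} is dominated on $B_R^e$ by $C|x|^{-\ell}$ for some $\ell>N$. I would then construct a radial super-solution $\Psi\in\D^{1,p}_0(\R^N)$ vanishing at infinity, exactly as in the last part of the proof of Lemma \ref{subsol} (via \cite[Lemma 3.2]{GG2}) with $\Psi\ge u$ on $\partial B_R$, and apply the weak comparison principle in exterior domains (\cite[Proposition A.12]{GG2}) to infer $u\le\Psi$ in $B_R^e$, which gives \eqref{decay}. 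The main obstacle is Step 1: the constant $M$ must be uniform over all solutions $u$ with $J(u)<\hat c$ and depend only on the listed parameters, which rules out a naive control of the critical term by $\|u\|_{p^*}^{p^*-p}$ and forces the use of the uniform equi-integrability from Lemma \ref{ccplemma} as the pivotal ingredient; once this uniform $L^\infty$ bound is secured, the remaining steps rest on standard nonlinear regularity theory combined with the barrier technique already developed in Lemma \ref{subsol}.
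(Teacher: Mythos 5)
Your proposal follows the paper's proof essentially step by step: global $L^\infty$ via De Giorgi--Stampacchia iteration with the critical term controlled by the uniform equi-integrability from Lemma \ref{ccplemma}, local $C^{1,\alpha}$ from a quantitative local gradient estimate (the paper uses \cite[Theorem 1.5]{DM}) combined with \cite[Corollary p.830]{DB}, a global gradient bound obtained by feeding the weight decay \eqref{weightdecay} back into the equation (the paper uses \cite[Lemma 2.4]{GM}), and pointwise decay via the barrier construction and exterior comparison principle already developed in Lemma \ref{subsol}. The strategy, the ingredients, and the key observation that uniformity of $M$ forces the use of Lemma \ref{ccplemma} rather than a naive $\|u\|_{p^*}$ bound all coincide with the paper's argument.
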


\begin{proof}
Given any $k>1$, we test \eqref{probnotdecay} with $(u-k)_+$. Recalling that $\lambda\in(0,1)$ and $u>k>1$ in $\Omega_k:=\{x\in\R^N: \, u(x)>k\}$, besides using \ref{hypf}--\ref{hypw} and Peter-Paul's inequality, we get
\begin{equation*}
\begin{aligned}
&\|\nabla (u-k)\|_{L^p(\Omega_k)}^p \\
&\leq \lambda \int_{\Omega_k} wf(u,\nabla u)u \dx + \|u\|_{L^{p^*}(\Omega_k)}^{p^*} \\
&\leq c_2 \left(\|w\|_\infty \int_{\Omega_k} u^{1-\gamma} \dx + \int_{\Omega_k} w|\nabla u|^{r-1}u \dx \right) + \|u\|_{L^{p^*}(\Omega_k)}^{p^*} \\
&\leq c_2\|w\|_\infty \left( \|u\|_{L^{p^*}(\Omega_k)}^{p^*} + \frac{\eps}{c_2\|w\|_\infty}\|\nabla u\|_{L^p(\Omega_k)}^p + C_\eps \|u\|_{L^{p^*}(\Omega_k)}^{p^*} + \|w\|_\infty^\theta |\Omega_k| \right) + \|u\|_{L^{p^*}(\Omega_k)}^{p^*} \\
&\leq \eps \|\nabla (u-k)\|_{L^p(\Omega_k)}^p + C_\eps\left(\|u\|_{L^{p^*}(\Omega_k)}^{p^*}+|\Omega_k|\right) \\
&\leq \eps \|\nabla (u-k)\|_{L^p(\Omega_k)}^p + C_\eps\left(\|u-k\|_{L^{p^*}(\Omega_k)}^{p^*}+k^{p^*}|\Omega_k|+|\Omega_k|\right).
\end{aligned}
\end{equation*}
Choosing $\eps=1/2$ and re-absorbing the term $\|\nabla (u-k)\|_{L^p(\Omega_k)}^p$ on the left-hand side, we get
$$ \|\nabla (u-k)\|_{L^p(\Omega_k)}^p \leq C\left(\|u-k\|_{L^{p^*}(\Omega_k)}^{p^*}+k^{p^*}|\Omega_k|\right), $$
for some $C>0$ depending only on $p,N,w,r,c_2$. By Sobolev's inequality we obtain
$$ \|u-k\|_{L^{p^*}(\Omega_k)}^p \leq C\left(\|u-k\|_{L^{p^*}(\Omega_k)}^{p^*}+k^{p^*}|\Omega_k|\right), $$
enlarging $C$ if necessary. Let $M>2$ and set $k_n:=M(1-2^{-n})$ for all $n\in\N$. Repeating verbatim the proof of \cite[Lemma 3.2]{CGL}, we infer that $\|u-k_n\|_{L^{p^*}(\Omega_{k_n})} \to 0$ as $n\to\infty$, provided $\|u-M/2\|_{L^{p^*}(\Omega_{M/2})}$ is small enough. According to Remark \ref{subcomparisonrmk}, $u$ solves \eqref{varprob} with $v=u$; hence Lemma \ref{ccplemma} provides $L^{p^*}(\R^N)$-integrability of $u$ which is uniform in the sub-level set $\{\tilde{u}\in\D^{1,p}_0(\R^N): J(\tilde{u})\leq J(u)<\hat{c}\}$. As a consequence (see \cite[p.267]{Bo}),
\begin{equation*}
\int_{\Omega_{M/2}} \left(u-\frac{M}{2}\right)^{p^*} \dx \leq \int_{\Omega_{M/2}} u^{p^*} \dx \to 0 \quad \mbox{as} \;\; M\to +\infty,
\end{equation*}
and the limit is uniform in $u$. Hence, $u\in L^\infty(\R^N)$ with uniform $L^\infty$ estimates.

Now we prove uniform $C^{1,\alpha}$ local estimates for $u$. By \ref{hypf}--\ref{hypw}, \eqref{usub}, Lemma \ref{subsol}, and the uniform $L^\infty$ estimate of $u$, we get
\begin{equation}
\label{holderreg2}
\begin{aligned}
0\leq \lambda f(u,\nabla u) + u^{p^*-1} &\leq c_2 \lambda \left(w\underline{u}_\lambda^{-\gamma} + w|\nabla u|^{r-1}\right) + u^{p^*-1} \\
&\leq  c_2\lambda^{\frac{p-1}{p-1+\gamma}}\|w\underline{u}^{-\gamma}\|_\infty+c_2\lambda \|w\|_\infty|\nabla u|^{r-1}+\|u\|_\infty^{p^*-1} \\
&= C(1+|\nabla u|^{r-1}),
\end{aligned}
\end{equation}
for a suitable $C=C(p,N,w,\gamma,r,c_1,c_2)>0$. Applying \cite[Theorem 1.5]{DM} (with $b(x,u,\nabla u)=1+|\nabla u|^{r-1}$, $V(x)\equiv C$, and $q=r-1$) yields $|\nabla u|\in L^\infty_{\rm loc}(\R^N)$ with uniform $L^\infty$ local estimates: more precisely,
\begin{equation}
\label{gradsupest}
\|\nabla u\|_{L^\infty(B_\rho)} \leq C \left(L + \rho^{\frac{1}{p-r+2}}\right) \quad \mbox{for all} \;\; \rho>0,
\end{equation}
enlarging $C$ if necessary. Thus, by \eqref{holderreg2}, the right-hand side of \eqref{prob} is locally bounded in $\R^N$ which entails, by nonlinear regularity theory (see \cite[Corollary p.830]{DB}), $u\in C^{1,\alpha}_{\rm loc}(\R^N)$ with uniform $C^{1,\alpha}$ local estimates.

In order to prove \eqref{decay}, we observe that \eqref{gradsupest} and \eqref{weightdecay} imply
$$ w(x)|\nabla u(x)|^{r-1} \leq C|x|^{\frac{r-1}{p-r+2}-l} \quad \mbox{in} \;\; B_R^e, $$
being $R$ as in \eqref{weightdecay}. Since
$$ l>N+\gamma\,\frac{N-p}{p-1}>N>\frac{N+1}{2}>\frac{p+1}{2}>\frac{p+1}{p-r+2}=\frac{r-1}{p-r+2}+1, $$
we deduce $\frac{r-1}{p-r+2}-l<-1$, whence $w|\nabla u|^{r-1}\in L^q(\R^N)$ for some $q>N$. Consequently, reasoning as for \eqref{holderreg2} leads to
$$ \sup_{y\in\R^N}\|\lambda f(u,\nabla u) + u^{p^*-1}\|_{L^q(B_2(y))} \leq c_2 \left(\lambda^{\frac{p-1}{p-1+\gamma}}\|w\underline{u}^{-\gamma}\|_q+\lambda\|w|\nabla u|^{r-1}\|_q\right) + \|u\|_\infty^{p^*-1} |B_2|^{1/q}.  $$
An application of \cite[Lemma 2.4]{GM} (adapting its proof to locally summable reaction terms) ensures that $\nabla u\in L^\infty(\R^N)$ uniformly in $u$. Hence, a computation similar to \eqref{holderreg2} yields
\begin{equation*}
\lambda f(u,\nabla u) + u^{p^*-1} \leq C(w\underline{u}_\lambda^{-\gamma}+1) \leq C|x|^{\gamma\,\frac{N-p}{p-1}-l} \quad \mbox{in} \;\; B_\rho^e,
\end{equation*}
for some $C=C(p,N,w,\gamma,r,c_1,c_2)>0$ and for all $\rho>0$ sufficiently large. The conclusion then follows by repeating the argument used in the proof of Lemma \ref{subsol}.
\end{proof}

\begin{proof}[Proof of Theorem \ref{mainthm}]
Theorem \ref{mainthm} is direct consequence of Theorems \ref{exsol} and \ref{regularity}.
\end{proof}

\begin{rmk}
At the end of the paper, we would like to get a glimpse of another problem, which is a critical perturbation of a singular problem:
\begin{equation}
\label{probbis}
\tag{${\rm P}'_\lambda$}
\left\{
\begin{alignedat}{2}
-\Delta_p u &= w(x)f(u,\nabla u) +  \lambda u^{p^*-1} \quad &&\mbox{in} \;\; \R^N, \\
u &> 0 \quad &&\mbox{in} \;\; \R^N, \\
\end{alignedat}
\right.
\end{equation}
being $\lambda\in(0,1)$ and $p,N,w,f$ as in \eqref{prob}.

Performing the change of variable $z=\lambda^{\frac{1}{p^*-p}}u$ one has
$$ -\Delta_p z = \lambda^{\frac{p-1}{p^*-p}} w(x) \tilde{f}(z,\nabla z) + z^{p^*-1}, $$
where
$$ \tilde{f}(s,\xi) = f\left(\lambda^{\frac{1}{p-p^*}}s,\lambda^{\frac{1}{p-p^*}}\xi\right).  $$
According to \ref{hypf}, one has
$$ c_1 \lambda^{\frac{p-1+\gamma}{p^*-p}}z^{-\gamma} \leq \lambda^{\frac{p-1}{p^*-p}} \tilde{f}(z,\nabla z) \leq c_2 \left( \lambda^{\frac{p-1+\gamma}{p^*-p}}z^{-\gamma} + \lambda^{\frac{p-r}{p^*-p}}|\nabla z|^{r-1} \right). $$
Since $\frac{p-1+\gamma}{p^*-p}>\frac{p-r}{p^*-p}$, problem \eqref{probbis} cannot be directly reduced to problem \eqref{prob}. Anyway, we notice that $\frac{p-1+\gamma}{p^*-p},\frac{p-r}{p^*-p}>0$, and this is mainly due to the $p$-sub-linearity of the convection terms; thus, the smallness conditions on $\lambda$ for problem \eqref{prob} are mapped into smallness conditions on $\lambda^{\frac{p-1+\gamma}{p^*-p}}$ and $\lambda^{\frac{p-r}{p^*-p}}$ for problem \eqref{probbis}. Accordingly, the techniques used in this paper may be effectively employed to study problem \eqref{probbis}.
\end{rmk}

\section*{Acknowledgments}
\noindent
We warmly thank Prof. Sunra Mosconi for his valuable comments about the mountain pass theorem and the $L^\infty$ estimates. \\
This work has been partially carried out during a stay at the Department of Mathematics and Computer Sciences of the University of
Catania: the authors would like to express their deep gratitude to this prestigious institution for its support and warm hospitality. \\
The authors are member of the {\em Gruppo Nazionale per l'Analisi Ma\-te\-ma\-ti\-ca, la Probabilit\`a e le loro Applicazioni}
(GNAMPA) of the {\em Istituto Nazionale di Alta Matematica} (INdAM); they are partially supported by the INdAM-GNAMPA Project 2023 titled {\em Problemi ellittici e parabolici con termini di reazione singolari e convettivi} (E53C22001930001). \\
Laura Baldelli is partially supported by National Science Centre, Poland \\
(Grant No. 2020/37/B/ST1/02742) and by the
``Maria de Maeztu'' Excellence Unit IMAG, reference CEX2020-001105-M, funded by MCIN/AEI/10.13039/501100011033/. \\
Umberto Guarnotta is supported by the following research projects: 1) PRIN 2017 `Nonlinear Differential Problems via Variational, Topological and Set-valued Methods' (Grant no. 2017AYM8XW) of MIUR; 2) `MO.S.A.I.C.' PRA 2020--2022 `PIACERI' Linea 3 of the University of Catania. \\
This study was carried out within the RETURN Extended Partnership and received funding from the European Union Next-GenerationEU (National Recovery and Resilience Plan – NRRP, Mission 4, Component 2, Investment 1.3 – D.D. 1243 2/8/2022, PE0000005).

\begin{small}

\end{small}


\begin{thebibliography}{99}
\bibitem{am} G. Astarita and G. Marrucci, {\it Principles of Non-Newtonian Fluid Mechanics}, McGraw-Hill, New York, 1974.
\bibitem{BFNA}  L. Baldelli and R. Filippucci,
{\it Existence results for elliptic problems with gradient terms via a priori estimates}, Nonlinear Anal. {\bf 198} (2020), Paper no. 111894, 22 pp.
\bibitem{BBF}  L. Baldelli, Y. Brizi, and R. Filippucci,
{\it Multiplicity results for $(p,q)$-Laplacian equations with critical exponent in $\mathbb R^N$ and negative energy},
Calc. Var. Partial Differential Equations {\bf 60} (2021), Paper no. 8, 30 pp.
\bibitem{BBFS} L. Baldelli, Y. Brizi, and R. Filippucci,
{\it On symmetric solutions for $(p,q)$-Laplacian equations in $\mathbb R^N$ with critical terms}, 
J. Geom. Anal. {\bf 32} (2022), Paper no. 120, 25 pp. 
\bibitem{ByF2}
L. Baldelli and R. Filippucci, {\it Existence of solutions for critical $(p, q)$-Laplacian equations in ${\mathbb R}^N$}, Commun. Contemp. Math. {\bf 25} (2022), Paper no. 2150109, 26 pp.
\bibitem{BFsc}
L. Baldelli and R. Filippucci, {\it Multiplicity results for generalized quasilinear critical Schrödinger equations in $\mathbb R^N$}, NoDEA Nonlinear Differential Equations Appl. {\bf 31} (2024), Paper no. 8.
\bibitem{BNTW}
A.K. Ben-Naoum, C. Troestler, and M. Willem, {\it Extrema problems with critical Sobolev exponents on unbounded domains}, Nonlinear Anal. {\bf 26} (1996), 823–833.
\bibitem{Bo}
V.I. Bogachev, {\it Measure theory. Volume I}, Springer-Verlag, Berlin, 2007.
\bibitem{B}
H. Brezis, {\it Functional analysis, Sobolev spaces and partial differential equations}, Universitext, Springer, New York, 2011.
\bibitem{bn}
H. Brezis and L. Nirenberg,
{\it Positive solutions of nonlinear elliptic equations involving critical Sobolev exponents},
 Comm. Pure Appl. Math. {\bf 36} (1983), 437-477.
\bibitem{CGL}
P. Candito, U. Guarnotta, and R. Livrea, \textit{Existence of two solutions for singular $\Phi$-Laplacian problems}, Adv. Nonlinear Stud. \textbf{22} (2022), 659–683.
\bibitem{CST}
A. Canino, B. Sciunzi, and A. Trombetta, {\it Existence and uniqueness for p-Laplace equations involving singular nonlinearities}, NoDEA Nonlinear Differential Equations Appl. {\bf 23} (2016), Paper no. 8, 18 pp.
\bibitem{CLM}
Y.S. Choi, A.C. Lazer, and P.J. McKenna, {\it Some remarks on a singular elliptic boundary value problem}, Nonlinear Anal. {\bf 32} (1998), 305–314.
\bibitem{D}
L. Damascelli, {\it Comparison theorems for some quasilinear degenerate elliptic operators and applications to symmetry and monotonicity results}, Ann. Inst. H. Poincaré C Anal. Non Linéaire {\bf 15} (1998), 493–516.
\bibitem{DB}
E. DiBenedetto, {\it $C^{1+\alpha}$ local regularity of weak solutions of degenerate elliptic equations}, Nonlinear Anal. {\bf 7} (1983), 827–850.
\bibitem{DM}
F. Duzaar and G. Mingione, \textit{Local Lipschitz regularity for degenerate elliptic systems}, Ann. Inst. H. Poincaré Anal. Non Linéaire \textbf{27} (2010), 1361–1396.
\bibitem{EG}
L.C. Evans and R.F. Gariepy, {\it Measure theory and fine properties of functions}, Textb. Math., CRC Press, Boca Raton, 2015.
\bibitem{fmt}
L.F.O. Faria, O.H. Miyagaki, and M. Tanaka,
{\it Existence of a positive solution for problems with $(p,q)$ -Laplacian and convection term in $\mathbb R^n$},
Bound. Value Probl. {\bf 158} (2016), Paper no. 158, 20 pp.
\bibitem{FL}  
I. Fonseca and G. Leoni,
{\it Modern methods in the calculus of variations: $L^p$ spaces},
Springer Monogr. Math., Springer, New York, 2007.
\bibitem{fm} W. Fulks and J.S. Maybee, {\it A singular non-linear equation}, Osaka Math. J. {\bf 12} (1960), 1–19.
\bibitem{GG1}
L. Gambera and U. Guarnotta, {\it Strongly singular convective elliptic equations in $\R^N$ driven by a non-homogeneous operator}, Commun. Pure Appl. Anal. {\bf 21} (2022), 3031–3054.
\bibitem{GG2}
L. Gambera and U. Guarnotta, {\it Existence, uniqueness, and decay results for singular $\Phi$-Laplacian systems in $\R^N$}, preprint.
\bibitem{ap} 
J. Garc\'ia Azorero and I. Peral Alonso,
{\it Multiplicity of solutions for elliptic problems with critical exponent or with a nonsymmetric term},
Trans. Amer. Math. Soc.  {\bf 323} (1991), 877–895.
\bibitem{GP}
L. Gasi\'nski and N.S. Papageorgiou, {\it Nonlinear Analysis}, Chapman \& Hall/CRC, Boca Raton, 2006.
\bibitem{gksr} A. Ghanmi, M. Kratou, K. Saoudi, and
D.D. Repov\v s, {\it Nonlocal $p$-Kirchhoff equations with singular and critical nonlinearity terms}, Asymptot. Anal. {\bf 131} (2023), 125–143.
\bibitem{gr8} M. Ghergu and V.D. R\v adulescu, {\it Singular elliptic problems: bifurcation and asymptotic analysis}, Oxford Lecture Ser. Math. Appl. {\bf 37}, The Clarendon Press, Oxford University Press, Oxford, 2008.
\bibitem{gst}
J. Giacomoni, I. Schindler, and P. Tak\'a\v c,
{\it Sobolev versus H\"older local minimizers and existence of multiple solutions for a singular quasilinear equation},
Ann. Sc. Norm. Super. Pisa Cl. Sci. {\bf 6} (2007), 117–158.
\bibitem{gisp} B. Gidas and J. Spruck, {\it A priori bounds for positive solutions of nonlinear elliptic equations}, Comm. Partial Differential Equations {\bf 6} (1981), 883–901.
\bibitem{gs} J.V. Gonçalves and C.A. Santos, {\it Positive solutions for a class of quasilinear singular equations}, Electron. J. Differ. Equ. 2004, Paper no. 56, 15 pp.
\bibitem{GD}
A. Granas and J. Dugundji, {\it Fixed point theory}, Springer Monogr. Math., Springer-Verlag, New York, 2003.
\bibitem{G}
U. Guarnotta, \textit{Existence results for singular convective elliptic problems}, Ph.D. thesis, University of Palermo, 2020/21.
\bibitem{glm}
U. Guarnotta, R. Livrea, and S.A. Marano, 
{\it Some recent results on singular $p$-Laplacian equations}, 
Demonstr. Math. {\bf 55} (2022), 416–428.
\bibitem{glm2}
U. Guarnotta, R. Livrea, and S.A. Marano, {\it Some recent results on singular $p$-Laplacian systems}, Discrete Contin. Dyn. Syst. Ser. S {\bf 16} (2023), 1435–1451.
\bibitem{GMar}
U. Guarnotta and S.A. Marano, {\it Infinitely many solutions to singular convective Neumann systems with arbitrarily growing reactions}, J. Differential Equations \textbf{271} (2021), 849–863.
\bibitem{GM}
U. Guarnotta and S.A. Marano, {\it A note on gradient estimates for $p$-Laplacian equations}, Boll. Unione Mat. Ital. (2023), doi:10.1007/s40574-023-00371-x.
\bibitem{GMMot}
U. Guarnotta, S.A. Marano, and D. Motreanu, {\it On a singular Robin problem with convection terms}, Adv. Nonlinear Stud. \textbf{20} (2020), 895–909.
\bibitem{GMMou}
U. Guarnotta, S.A. Marano, and A. Moussaoui, {\it Singular quasilinear convective elliptic systems in $\R^N$}, Adv. Nonlinear Anal. \textbf{11} (2022), 741–756.
\bibitem{h99} 
Y. Huang,
{\it On Multiple Solutions of Quasilinear Equations Involving the Critical Sobolev Exponent}, J. Math. Anal. Appl. {\bf 231} (1999), 142–160.
\bibitem{LM}
A.C. Lazer and P.J. McKenna, {\it On a singular nonlinear elliptic boundary-value problem}, Proc. Amer. Math. Soc. {\bf 111} (1991), 721–730.
\bibitem{L3}
P.-L. Lions, {\it The concentration-compactness principle in the calculus of variations. The limit case. I}, Rev. Mat. Iberoamericana {\bf 1} (1985), 145–201.
\bibitem{LMZ}
Z. Liu, D. Motreanu, and S. Zeng, {\it Positive solutions for nonlinear singular elliptic equations of $p$-Laplacian type with
dependence on the gradient}, Calc. Var. Partial Differ. Equ. {\bf 58} (2019), Paper no. 28, 22 pp.
\bibitem{MMM}
S.A. Marano, G. Marino, and A. Moussaoui, {\it Singular quasilinear elliptic systems in $\R^N$}, Ann. Mat. Pura Appl. {\bf 198} (2019), 1581–1594.
\bibitem{MS}
S. Mosconi and M. Squassina, {\it Nonlocal problems at nearly critical growth}, Nonlinear Anal. {\bf 136} (2016), 84–101.
\bibitem{MMP}
D. Motreanu, V.V. Motreanu, and N.S. Papageorgiou, {\it Topological and variational methods with applications to nonlinear boundary value problems}, Springer, New York, 2014.
\bibitem{mps}
T. Mukherjee, P. Pucci, and L. Sharma, {\it Nonlocal critical exponent singular problems under mixed Dirichlet-Neumann boundary conditions}, J. Math. Appl. {\bf 531} (2024), Paper no. 127843, 28pp.
\bibitem{prr} N.S. Papageorgiou, V.D. R\v adulescu, and D.D. Repov\v s, {\it Nonlinear analysis - theory and methods}, Springer Monogr. Math., Springer, Cham, 2019.
\bibitem{pesi} K. Perera and E.A.B. Silva, {\it Existence and multiplicity of positive solutions for singular quasilinear problems}, J. Math. Anal.
Appl. {\bf 323} (2006), 1238–1252.
\bibitem{p} W.L. Perry, {\it A monotone iterative technique for solution of p-th order $(p<0)$ reaction-diffusion problems in permeable
catalysis}, J. Comput. Chemistry. {\bf 5} (1984), 353–357.
\bibitem{PS}
P. Pucci and J. Serrin, 
\textit{The maximum principle}, Prog. Nonlinear Differential Equations Appl. {\bf 73}, Birkh\"auser Verlag, Basel, 2007.
\bibitem{PW}
P. Pucci and L. Wang, 
\textit{The Brézis–Nirenberg equation for the Laplacian in the whole space}, Discrete Contin. Dyn. Syst. Ser. S {\bf 13} (2023), 3270–3289.
\bibitem{Ruiz} 
D. Ruiz,
{\it A priori estimates and existence of positive solutions for strongly nonlinear problems}, J. Differential Equations {\bf 199} (2004), 96–114.
\bibitem{S}
M. Struwe, {\it Variational methods}, Springer-Verlag, Berlin, 1990.
\end{thebibliography}
\end{document}